\tikzset{
    labl/.style={anchor=south, rotate=90, inner sep=.5mm}
}
\numberwithin{equation}{section}
\newtheorem{theorem}{Theorem}[section]
\newtheorem{proposition}[theorem]{Proposition}
\newtheorem{corollary}[theorem]{Corollary}
\newtheorem{lemma}[theorem]{Lemma}
\newtheorem*{remark*}{Remark}
\theoremstyle{definition}
\newtheorem{definition}[theorem]{Definition}
\newtheorem{remark}[theorem]{Remark}
\newcommand{\C}{\mathbb{C}}
\newcommand{\Q}{\mathbb{Q}}
\newcommand{\R}{\mathbb{R}}
\newcommand{\Z}{\mathbb{Z}}
\def\ops@declare#1{\expandafter\DeclareMathOperator\csname #1\endcsname{#1}}
\def\ops@scan#1,{\ifx#1\relax\let\ops@next\relax\else\ops@declare{#1}\let\ops@next\ops@scan\fi\ops@next}
\newcommand{\DeclareMathOperators}[1]{\ops@scan#1,\relax,}
\def\makebb#1{\expandafter\def\csname b#1\endcsname{{\mathbb{#1}}}\ignorespaces}
\def\makebf#1{\expandafter\def\csname bf#1\endcsname{{\mathbf{#1}}}\ignorespaces}
\def\makegr#1{\expandafter\def\csname f#1\endcsname{{\mathfrak{#1}}}\ignorespaces}
\def\makescr#1{\expandafter\def\csname s#1\endcsname{{\mathscr{#1}}}\ignorespaces}
\def\makec#1{\expandafter\def\csname c#1\endcsname{{\mathcal{#1}}}\ignorespaces}
\def\makecal#1{\expandafter\def\csname cal#1\endcsname{{\mathcal{#1}}}\ignorespaces}
\def\doLetters#1{#1A #1B #1C #1D #1E #1F #1G #1H #1I #1J #1K #1L #1M
	#1N #1O #1P #1Q #1R #1S #1T #1U #1V #1W #1X #1Y #1Z}
\def\doletters#1{#1a #1b #1c #1d #1e #1f #1g #1h #1j #1k #1l #1m
	#1n #1o #1p #1q #1r #1s #1t #1u #1v #1w #1x #1y #1z}
\def\abs#1{\lvert#1\rvert}
\def\norm#1{\lVert#1\rVert}
\begin{document}

\title[An asymptotic formula for the number of integral matrices via orbital integrals]{An asymptotic formula for the number of integral matrices with a fixed characteristic polynomial via orbital integrals}

\keywords{}

\subjclass[2020]{MSC11F72, 11G35, 11R37, 11S80, 14F22, 14G12, 20G30, 20G35} 

\author[Seongsu Jeon]{Seongsu Jeon}
\author[Yuchan Lee]{Yuchan Lee}
\thanks{The authors are supported by Samsung Science and Technology Foundation under Project Number SSTF-BA2001-04.}

\address{Seongsu Jeon \\  Department of Mathematics, POSTECH, 77, Cheongam-ro, Nam-gu, Pohang-si, Gyeongsangbuk-do, 37673, KOREA}

\email{ssjeon.math@gmail.com}

\address{Yuchan Lee \\  Department of Mathematics, POSTECH, 77, Cheongam-ro, Nam-gu, Pohang-si, Gyeongsangbuk-do, 37673, KOREA}

\email{yuchanlee329@gmail.com}

\begin{abstract}
For an irreducible polynomial $\chi(x)\in \mathcal{O}_k[x]$ of degree $n$, where $k$ is a number field and $\mathcal{O}_k$ its ring of integers,
let $N(X, T)$ denote the number of $n \times n$ integral matrices whose characteristic polynomial is $\chi(x)$, bounded by a positive real number $T$ with respect to a certain norm. 
In this paper, we provide an asymptotic formula for $N(X,T)$ as $T\to \infty$ in terms of the orbital integrals of $\mathfrak{gl}_n$.
This result extends the work of A. Eskin, S. Mozes, and N. Shah \cite{EMS} (1996) to a broader setting, thereby further developing the generalization initiated by the second author in \cite{Lee25}.

Our approach is based on the interpretation of local Brauer evaluations for $X$ via  local class field theory, and on the Langlands–Shelstad fundamental lemma for $\mathfrak{sl}_n$.
In particular, we observe that local Brauer evaluations for $X$ determine local endoscopic data for $\mathrm{SL}_n$, suggesting a deeper conceptual connection between these two notions.
\end{abstract}
\maketitle
\tableofcontents

\section{Introduction}
For an algebraic variety defined over $\mathbb{Z}$, understanding its $\mathbb{Z}$-points can be viewed as a classical number-theoretic problem, studying solutions to a Diophantine system.
More generally, one can expand objects to a variety $X$ defined over $\mathcal{O}_k$, where $\mathcal{O}_k$ is the ring of integers of a number field $k$.
In this context, a Diophantine analysis investigating the distribution of $X(\mathcal{O}_k)$
has been thoroughly studied for $k = \Q$ (\cite{Bir}, \cite{BR}, \cite{DRS}, and \cite{Sch}). 
Here the distribution of $X(\mathcal{O}_k)$ means the asymptotic behavior of $N(X,T)$ as $T\rightarrow \infty$ where 
\[
N(X,T)=\#\{x\in X(\mathcal{O}_k)\mid \norm{x}_\infty \leq T\}
\]with a certain norm $\norm{\cdot}_\infty$ defined on $X(k)$.

In this paper, we concentrate on a variety $X$ which represents the set of $n\times n$ matrices whose characteristic polynomial is $\chi(x)$, where $\chi(x)\in \mathcal{O}_k[x]$ is an irreducible monic polynomial of degree $n$.
We aim to formulate the function $A(T)$ such that $\frac{N(X,T)}{A(T)}\rightarrow 1$ as $T\rightarrow \infty$,
with respect to the norm given by 
\begin{equation}\label{def:norm}
\norm{x}_\infty = \max_{v\in \infty_k} \sqrt{\sum_{i,j = 1}^{n} |x_{ij}|_v^2},
\end{equation}
where $x$ embeds to $(x_{ij}) \in \mathrm{M}_n(k)$ and $\infty_k$ denotes the set of Archimedean places of $k$.

This is the case investigated by Eskin, Mozes, and Shah \cite{EMS} over $\mathbb{Q}$.
They \cite[Theorem 1.1]{EMS} proved the following formula:
    \begin{equation}\label{prop:EMS_intro}
            N(X,T)\sim \frac{2^{n-1}h_{K} R_{K} w_n}{\sqrt{\Delta_{\chi}}\cdot \prod_{k=2}^n\Lambda(k/2)}T^{\frac{n(n-1)}{2}}, 
        \end{equation}
under the assumptions that
\begin{equation}\label{eq:conditiononEMS}\left\{
\begin{array}{l}
\textit{(1) $k=\Q$}; \\
\textit{(2) $\mathbb{Z}[x]/(\chi(x))$ is the ring of integers of $\mathbb{Q}[x]/(\chi(x))$};\\ 
\textit{(3) $\mathbb{Q}[x]/(\chi(x))$ is totally real, equivalently $\chi(x)$ splits completely over $\R$}.
\end{array}\right.\end{equation}
Here, for two functions $A(T)$ and $B(T)$ for $T$, we write $A(T)\sim B(T)$ as $T\rightarrow \infty$ if $\lim\limits_{T\rightarrow \infty}\frac{A(T)}{B(T)}=1$.
\begin{itemize}
\item 
$h_{K}$ is the class number of $\mathcal{O}_K$ and $R_{K}$ is the regulator of $K$ for $K=k[x]/(\chi(x))$ and its ring of integers $\mathcal{O}_K$.
\item
$\Delta_{\chi}$ is the discriminant of $\chi(x)$, $w_n$ is the volume of the unit ball in $\mathbb{R}^{\frac{n(n-1)}{2}}$, and $\Lambda(s)=\pi^{-s}\Gamma(s)\zeta(2s)$.
\end{itemize}

In \cite[Theorem 6.7]{Lee25}, the second author extended this asymptotic formula for $N(X,T)$ to number field $k$ expressed in terms of orbital integrals on $\mathrm{GL}_n$.
This extension, however, was limited to the case where 
$\chi(x)$ arises as the characteristic polynomial of 
$\mathrm{SL}_n$, i.e. $\chi(0)=1$
\cite[Theorem 6.7]{Lee25}.

In this paper, we extend this result: we remove the assumption $\chi(0)=1$ when $\chi(x)$ splits over $k_v$ at $p$-adic places $v$ of $k$ for $p\leq n$, while retaining the assumptions on total realness of $k$ and $K$ and the prime degree of $\chi(x)$ as in \cite[Theorem 6.7]{Lee25}.
Consequently, our result extends the asymptotic formula of Eskin, Mozes, and Shah to this wider setting
\begin{equation}\label{intro:condition_ours}
\left\{
\begin{array}{l}
     \textit{(1) $k$ and $K\left(:=k[x]/(\chi(x))\right)$ are totally real number fields};  \\
     \textit{(2) if $k\neq \mathbb{Q}$, then  $\chi(x)$ is of prime degree $n$};\\
     \textit{(3) if $K/k$ is unramified Galois, then $\chi(0)=1$ or $\chi(x)$ splits over $k_v$ at $p$-adic places $v$ of $k$ for $p \leq n$},
\end{array}
\right.
\end{equation}
developing the extension initiated in \cite{Lee25}.

\cite{Lee25} approaches the case where $\chi(x)$ arises from $\mathrm{SL}_n$ by employing a global argument based on endoscopy theory.
More precisely, from a modified form of \cite[Theorem 4.3]{WX}, the author uses the pre-stabilization of Arthur's trace formula to derive the desired asymptotic.
Since directly applying this approach to the Lie algebra setting is not straightforward, we instead take a local perspective via \cite[Theorem 4.3]{WX} and work with explicit local integrals.

Under this approach, the most delicate part is the analysis of the Brauer evaluations appearing in local integrals.
In general, such computations are subtle.
In this work, we carry this out using local class field theory and, furthermore, we observe that Brauer evaluations determine local endoscopic data for $\mathrm{SL}_n$ (see Lemma \ref{lem:eval-endo}).
This suggests a broader connection between Brauer evaluations and endoscopic data, which we hope will be clarified in future work.

\subsection{Backgrounds}
One of the important observations for $k = \Q$ is introduced in \cite{BR}, which is called Hardy-Littlewood expectation.
Borovoi and Rudnick \cite[Theorem 5.3]{BR} proved that a particular homogeneous space of a simply connected semisimple group, including our case when $k=\mathbb{Q}$, can be represented by the integration of a certain function defined on the adelic points of $X$.

Wei and Xu \cite{WX} generalized the observation of Borovoi and Rudnick for $X$ defined over any number field $k$.
They mainly used the fact that $X$ satisfies the strong approximation property with Brauer-Manin obstruction following \cite[Theorem 0.1]{BD}.
To introduce the results in \cite{WX}, we use the following notations:
\[
\left\{
\begin{array}{l}
     \textit{$\Omega_k$ is the set of places of $k$, and $\infty_k$ is the set of Archimedean places of $k$};\\
     \textit{$k_v$ is a completion with respect to the place $v$}; \\
     \textit{$\mathcal{O}_{k_v}$ is its ring of integers and $\pi_v$ is a uniformizer of $\mathcal{O}_{k_v}$ for $v\in \Omega_k\setminus \infty_k$};\\
     \textit{$\kappa_v$ is the residue field of $\mathcal{O}_{k_v}$ and $q_v=\#\kappa_v$ for $v\in \Omega_k\setminus \infty_k$}.
\end{array}
 \right.
\]

\begin{proposition}{\cite[Theorem 4.3]{WX}}\label{prop:main_idea_intro}
The following asymptotic equivalence holds
    \begin{equation}\label{eq:main_idea_intro}
        N(X, T) \sim \sum_{\xi \in \Br X / \Br k} \prod_{v \in \Omega_k \setminus\infty_k} \int_{X(\mathcal{O}_{k_v})} \xi_v(x) \ m^X_v \prod_{v \in \infty_k} \int_{X(k_v, T)} \xi_v(x) \ m^X_v, \end{equation}
        where
        \[
        \left\{
        \begin{array}{l}
        X(k_v, T)= \{x\in X(k_v)\mid \norm{x}_{v}\leq T\} \textit{ for $v\in \infty_k$ and $T > 0$};  \\
        m^X=\prod_{v\in\Omega_k}m^X_v \textit{ is the Tamagawa measure on $X(\mathbb{A}_k)$ }\text{(see Section \ref{sec:Tammeasure})}; \\
        \Delta_k \textit{ is the absolute discriminant of $k$}; \\
        \textit{$\Br X / \Br k$ is the finite abelian group defined in Remark \ref{rem:normalized_eval}}; \\
        \xi_v(x)\textit{ denotes a Brauer evaluation of $\xi$ at $x \in X(k_v)$ (see Definition \ref{def:brauer_evaluation})}.
    \end{array}\right.
    \]
\end{proposition}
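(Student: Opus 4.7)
The strategy follows the Hardy--Littlewood/Brauer--Manin framework of Borovoi--Rudnick \cite{BR}, extended to arbitrary number fields via the strong approximation results of Borovoi--Demarche. First I would identify $X$ as a homogeneous space: since $\chi(x)$ is irreducible over $k$, the group $\mathrm{GL}_n$ acts transitively by conjugation on $X$, and the stabilizer of any $x_0 \in X(k)$ is the maximal torus $T_{x_0} \simeq \mathrm{Res}_{K/k}\mathbb{G}_m$ where $K = k[x]/(\chi(x))$. Thus $X \simeq \mathrm{GL}_n/T_{x_0}$ as $k$-varieties; passing to $\mathrm{SL}_n$ exhibits $X$ as a homogeneous space of a simply connected semisimple group with connected stabilizer, which is precisely the setting of \cite[Theorem 0.1]{BD}. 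This gives that the Brauer--Manin obstruction is the only obstruction to strong approximation away from $\infty_k$, so the closure of $X(k)$ in $X(\mathbb{A}_k^{\infty_k})$ coincides with the Brauer--Manin set $X(\mathbb{A}_k)^{\Br}$.

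Next I would rewrite the count $N(X,T) = \#(X(k)\cap R_T)$ with $R_T = \prod_{v\in\infty_k} X(k_v,T) \times \prod_{v\notin\infty_k} X(\mathcal{O}_{k_v})$, and relate it to the Tamagawa volume of $R_T \cap X(\mathbb{A}_k)^{\Br}$. The density statement above guarantees that only adelic points in $X(\mathbb{A}_k)^{\Br}$ can be approximated by $k$-rational points, so this restriction is forced. The passage from counting to Tamagawa volume is an equidistribution statement: the counting measure on $X(k)$ (normalized by $\Delta_k^{-\dim X/2}$, absorbed into the convention for $m^X$ in Section \ref{sec:Tammeasure}) becomes asymptotically the Tamagawa measure on $X(\mathbb{A}_k)^{\Br}$, with archimedean dilation $T$ playing the role of the truncation parameter. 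For this one invokes the homogeneous-dynamics inputs on $\mathrm{SL}_n(\mathbb{R})/T_{x_0}(\mathbb{R})$ via Mozes--Shah and Dani--Margulis, as in the archimedean heart of \cite{EMS}, combined with the adelic mixing/folding needed to handle the nontrivial class group at finite places.

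The final step is Fourier inversion on the finite abelian group $\Br X/\Br k$. Orthogonality of characters and the compatibility of local invariants with the global reciprocity law give a decomposition of the form
\[
\mathbf{1}_{X(\mathbb{A}_k)^{\Br}}(x) \;=\; \sum_{\xi \in \Br X/\Br k}\,\prod_{v\in\Omega_k}\xi_v(x_v),
\]
under the normalization of Remark \ref{rem:normalized_eval}. Substituting this into the Tamagawa integral and factorizing the product measure $m^X = \prod_v m^X_v$ yields exactly the right-hand side of \eqref{eq:main_idea_intro}.

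The main obstacle is the equidistribution step: upgrading the Brauer--Manin-restricted Tamagawa volume to an honest asymptotic rather than a mere upper bound requires uniform control on the boundary contribution of $R_T$ as $T\to\infty$ at the archimedean places, with error terms that do not swamp the Brauer sum. This is why one must interface \cite{BD} with the full equidistribution machinery refining \cite{EMS}, and it is the step that genuinely uses the homogeneous structure of $X$ rather than merely its adelic topology. Keeping the discriminant factor $\Delta_k$ inside the Tamagawa normalization and verifying that the individual Brauer evaluations $\xi_v$ are locally integrable against $m^X_v$ (so that the product of local integrals in \eqref{eq:main_idea_intro} makes sense place by place) are the remaining technical points.
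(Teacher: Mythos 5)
This proposition is not proved in the paper at all: it is imported verbatim as \cite[Theorem 4.3]{WX}, and the only thing the paper adds is Remark~\ref{rmk:equidistribution_for_SLn}, which observes that the equidistribution hypothesis (4.2) of loc.\ cit.\ required for that theorem is verified for the present $X$ by \cite[Remark~6.1]{Lee25}. So strictly speaking there is no ``paper's own proof'' against which to compare you; what you have done is reconstruct the proof of the cited Wei--Xu result, and at the level of a plan your reconstruction identifies the right moving parts — the homogeneous-space structure $X\cong T_{x_0}\backslash\mathrm{GL}_n\cong S_{x_0}\backslash\mathrm{SL}_n$, strong approximation with Brauer--Manin obstruction from \cite[Theorem 0.1]{BD}, the archimedean equidistribution refining \cite{EMS}, and Fourier analysis on $\Br X/\Br k$. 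You also correctly flag the archimedean equidistribution as the genuinely hard step; this is precisely what the paper dispatches by citation rather than by argument.

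One technical point that your plan glosses over: orthogonality of characters on the finite abelian group $B:=\Br X/\Br k$ gives $\mathbf{1}_{X(\mathbb{A}_k)^{\Br}}(x)=\tfrac{1}{|B|}\sum_{\xi\in B}\prod_v\xi_v(x_v)$, not the unnormalized sum you wrote. Thus ``substituting this into the Tamagawa integral'' produces $|B|^{-1}$ times the right-hand side of \eqref{eq:main_idea_intro}, not the right-hand side itself. The stated formula is nonetheless correct because a compensating factor of $|B|$ enters in the passage from adelic volume to rational-point count (in the Borovoi--Rudnick/Wei--Xu treatment it is absorbed into the count of local orbits meeting the adelic box, governed by the torus stabilizer and the Hasse principle for $\mathrm{SL}_n$). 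If you intend to carry your sketch to completion rather than cite Wei--Xu, this cancellation is exactly the point where you would need to match conventions precisely; as written the Fourier-inversion step is off by a factor of $|B|$.
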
\noindent
Via this result, they deduced the same results with (\ref{prop:EMS_intro}).
They showed that for a nontrivial element $\xi\in\Br X/\Br k$,
$\int_{X(\Z_p)} \xi_p(x) \ m^X_p=0$ for a ramified prime number $p$.
Hence the corresponding summand on the right-hand side of (\ref{eq:main_idea_intro}) vanishes.
Then they computed the local integral for a trivial element in $\Br X / \Br k$, under the assumption (\ref{eq:conditiononEMS}). 

However, their method is not applicable for a general number field since the ramified place might not exist for a number field other than $\mathbb{Q}$ (see \cite[Example 6.3]{WX}).
Moreover, computing the local integral is challenging unless $\Z[x]/(\chi(x))$ coincides with the ring of integers of $\mathbb{Q}[x]/(\chi(x))$
(see Proposition \ref{prop:result_case:trivial}).

\begin{remark}\label{rmk:equidistribution_for_SLn}
    In fact, \cite[Theorem 4.3]{WX} is stated under the assumption that $X$ satisfies the equidistribution property  (4.2) in loc. cit.
    As shown in \cite[Remark 6.1]{Lee25}, this assumption is indeed fulfilled for $X$ in our setting.
\end{remark}

\subsection{Main results}\label{intro:main}
Our strategy to obtain the asymptotic formula for $N(X, T)$ is based on Proposition \ref{prop:main_idea_intro}.
The main observation is to describe the evaluation of $\xi \in \Br X$ in (\ref{eq:main_idea_intro}) by means of several commutative diagrams and local class field theory.

According to Section \ref{sec:22}, our method depends on the $\mathrm{GL}_n$-homogeneous space (resp. $\mathrm{SL}_n$-homogeneous space) structure of $X$.
As in Proposition \ref{prop:homogeneous}, we fix $x_0\in X(\mathcal{O}_k)$ such that
\[
    \mathrm{T}_{x_0}\backslash \mathrm{GL_n} \xrightarrow{\sim} X \ (resp.\ \mathrm{S}_{x_0}\backslash \mathrm{SL}_n \xrightarrow{\sim} X),\ g\mapsto g^{-1}x_0g,
\]
where $\mathrm{T}_{x_0}$ (resp. $\mathrm{S}_{x_0}$) denotes the centralizer of $x_0$ under the $\mathrm{GL}_n$-conjugation (resp. the $\mathrm{SL}_n$-conjugation) on $X$.

\subsubsection{\textbf{Brauer evaluation of $\xi \in \Br X$ on $X(k_v)$}}
The Brauer evaluation of $\xi \in \Br X$ on $X(k_v)$ is defined by using the functoriality of the Brauer group in Definition \ref{def:brauer_evaluation}.
To establish the well-definedness of the local evaluation of
$\xi \in \Br X$, we define the normalization $\tilde{\xi}_v$ in Definition \ref{def:normalizedeval}. 
In (\ref{eq:normalized_evaluation}), we show that this normalization does not affect the product in (\ref{eq:main_idea_intro}).

\begin{proposition}\label{prop:evaluation_intro}
    Suppose that $\chi(x)$ is of prime degree. For $\xi \in \Br X$, the normalized evaluation $\tilde{\xi}_v$ is formulated as follows.
    \begin{enumerate} 
        \item (Proposition \ref{prop:not_galois}) If $K/k$ is not Galois, then $\tilde{\xi}_v(x) = 1$. 
        \item (Proposition \ref{prop:evaluation}.(2)) If $K/k$ is Galois and $K_v/k_v$ is not a field extension, then $\tilde{\xi}_v(x) = 1$. 
        \item (Proposition \ref{prop:evaluation}.(1)) If $K/k$ is Galois and $K_v/k_v$ is a field extension, then 
        \[\tilde{\xi}_v(x) = \exp 2\pi i \Psi(\xi) (\phi_{K_v/k_v} (\det g_x)),\]
        where
        \[
    \left\{
    \begin{array}{l}
    \textit{$\Psi : \Br X / \Br k \xrightarrow{\sim} \Hom(\Gal(K/k), \Q/\Z)$ is an isomorphism defined in Proposition \ref{prop:evaluation}};\\
    \textit{$g_x \in \mathrm{GL}_n(k_v)$ such that $g_x^{-1} x_0 g_x = x$};\\ 
    \textit{$\phi_{K_v/k_v} : k_v^\times / \Nm_{K_v/k_v} K_v^\times \xrightarrow{\sim} \Gal(K_v/k_v)$ is the Artin reciprocity isomorphism }
    \\ (\textit{cf. \cite[Chapter XI]{Ser}}).
    \end{array}
    \right.
    \]
    \end{enumerate}
\end{proposition}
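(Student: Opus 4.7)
The plan is to compute $\tilde{\xi}_v(x)$ by exploiting the $\GL_n$-homogeneous structure $X \cong \mathrm{T}_{x_0}\backslash \GL_n$ and routing the Brauer evaluation through local class field theory. Given $x \in X(k_v)$, fix $g_x \in \GL_n(k_v)$ with $g_x^{-1}x_0 g_x = x$ and use functoriality of $\Br$ applied to $\mathrm{Spec}\, k_v \xrightarrow{g_x} \GL_n \to X$ to reduce $\xi_v(x)$ to the pullback of $\xi$ along $g_x$. Since $\mathrm{T}_{x_0} \cong \mathrm{Res}_{K/k}\mathbb{G}_m$ (because $\chi$ is irreducible), the torsor $\GL_n \to X$ together with standard exact sequences of Rosenlicht--Sansuc type for Brauer groups of homogeneous spaces expresses this pullback as a character of $\mathrm{T}_{x_0}(k_v)\cong (K\otimes_k k_v)^\times$ coming from an element of $\Hom(\Gal(K/k),\Q/\Z)$ attached to $\xi$.

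The restriction of $\det:\GL_n\to\mathbb{G}_m$ to $\mathrm{T}_{x_0}$ is the norm $\Nm_{K_v/k_v}$, so the character in question factors through $k_v^\times/\Nm_{K_v/k_v}K_v^\times$ applied to $\det g_x$. By Artin reciprocity, $k_v^\times/\Nm_{K_v/k_v}K_v^\times \cong \Gal(K_v/k_v)\subseteq \Gal(K/k)$ via $\phi_{K_v/k_v}$. Case (2) then follows at once: if $K\otimes_k k_v$ is not a field, the norm $\Nm_{K_v/k_v}$ is surjective onto $k_v^\times$, so the character vanishes and $\tilde\xi_v(x)=1$. For case (3), tracing the identifications shows that the character is exactly $\Psi(\xi)\circ\phi_{K_v/k_v}$ evaluated at $\det g_x$, yielding the stated formula.

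Case (1) requires a separate argument. When $[K:k]=n$ is prime and $K/k$ is non-Galois, the natural target $\Hom(\Gal(K/k),\Q/\Z)$ is no longer the right receptacle: the character lattice $\widehat{\mathrm{T}}_{x_0}$ must be analyzed as a $\Gal(\widetilde{K}/k)$-module for $\widetilde{K}$ the Galois closure. I would show, via Shapiro's lemma and the fact that for non-normal prime-degree $K/k$ the subgroup $\Gal(\widetilde{K}/K)$ admits no nontrivial characters surviving restriction to decomposition groups, that the relevant subgroup of $H^1(k,\widehat{\mathrm{T}}_{x_0})$ identifying $\Br X/\Br k$ contributes only classes that vanish locally. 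Equivalently, the local evaluation factors through a norm from the Galois closure that kills every class.

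The main obstacle is constructing $\Psi : \Br X/\Br k \xrightarrow{\sim} \Hom(\Gal(K/k),\Q/\Z)$ in a way that is compatible with the local evaluation map on the nose. Two checks are crucial: first, $\tilde\xi_v(x)$ must be independent of the choice of $g_x$, which follows because two choices differ by an element of $\mathrm{T}_{x_0}(k_v)$ whose determinant lies in $\Nm_{K_v/k_v}K_v^\times$ and is therefore killed by $\phi_{K_v/k_v}$; second, the local Brauer pairing arising from functoriality must coincide with $\Psi(\xi)\circ\phi_{K_v/k_v}\circ\det$. This last matching is the heart of the argument and reduces to aligning the cup-product construction of Brauer classes on $\mathrm{T}_{x_0}\backslash\GL_n$ with the explicit Artin reciprocity map through the Hochschild--Serre spectral sequence of the torsor $\GL_n\to X$.
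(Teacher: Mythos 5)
There is a genuine gap in your case (2). You assert that if $K\otimes_k k_v$ is not a field then $\Nm_{K_v/k_v}$ is surjective onto $k_v^\times$. This is false for a general \'etale algebra $K_v = \prod_i K_{v,i}$: for instance if $K_v \cong L\times L$ with $L/k_v$ a proper extension, the norm image is $\Nm_{L/k_v}L^\times$, which is still a proper subgroup. What actually saves you is the prime degree assumption: since $\Gal(K/k)\cong\Z/n\Z$ with $n$ prime, each $\Gal(K_{v,i}/k_v)$ is a proper subgroup once $K_v$ is not a field, hence trivial, so $K_v$ splits completely and $\mathrm{S}_{x_0,k_v}$ is a split torus. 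The paper concludes from this that $\Pic\mathrm{S}_{x_0,k_v}$ and therefore $\Br X_{k_v}/\Br k_v$ vanish. Your claim needs that splitting step, which you omit.

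Case (1) is also weaker than what is required. The paper establishes that $\Br X/\Br k$ is \emph{globally trivial}: via $\Br X/\Br k \cong \ker\bigl(\Hom(\Gal(L/k),\Q/\Z)\to\Hom(\Gal(L/K),\Q/\Z)\bigr)$ for $L$ the Galois closure, an element of the kernel is a character of $\Gal(L/k)$ trivial on $\Gal(L/K)$, and since $K/k$ is non-Galois of prime degree the normal closure of $\Gal(L/K)$ is all of $\Gal(L/k)$, forcing the character to be trivial. Your proposed argument about "classes that vanish locally" and "characters surviving restriction to decomposition groups" aims at a strictly weaker local vanishing and is not made precise; you should instead run the normal-closure argument.

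In case (3) your roadmap is broadly in the spirit of the paper, but there is a structural issue with routing through the $\mathrm{GL}_n$-torsor and $\mathrm{T}_{x_0}$: since $\mathrm{T}_{x_0}\cong\mathrm{R}_{K/k}\bG_m$ and Hilbert 90 gives $\mathrm{H}^1(k_v,\mathrm{T}_{x_0})=1$, the connecting map for the $\mathrm{GL}_n$-torsor is trivial and does not produce the cohomological pairing you need. The paper instead uses the $\mathrm{SL}_n$-homogeneous structure, with $\mathrm{S}_{x_0}$ as stabilizer and the exact sequence $1\to\mathrm{S}_{x_0}(K_v)\to(K_v\otimes_{k_v}K_v)^\times\to K_v^\times\to 1$; the determinant enters because the image of $x$ in $\mathrm{H}^1(k_v,\mathrm{S}_{x_0})$ is $\delta_1(\det g_x)$. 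The heart of the computation is then a compatibility of cup products with two connecting homomorphisms (via explicit split short exact sequences), which you gesture at but do not supply. Fixing cases (1) and (2), and replacing the $\mathrm{T}_{x_0}$-centered framework by the $\mathrm{S}_{x_0}$-centered one, would bring your argument into line with the proof.
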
\noindent

\subsubsection{\textbf{Computation of local integrals}}\label{sec122}
Now we formulate the integral of $\tilde{\xi}_v$ for each $v \in \Omega_k$ using the more manageable measure $d\mu_v$ in (\ref{eq:quotient_measure}).
Here, Lemma \ref{lem:measure_comparison_m_mu} allows us to replace the measure $m^X$ with $\prod_{v \in \Omega_k} d\mu_v$.

In the case of an Archimedean place, it is enough to consider the case that $\tilde{\xi}_v$ is trivial by Proposition \ref{prop:evaluation_intro}.(2) (see assumption (1) in (\ref{intro:condition_ours})).
Then the computation of the integral directly follows from \cite[Lemma A.1]{Lee25}.
\begin{proposition}\label{prop:int_arch}(Proposition \ref{prop:eq_int_arch})
     Suppose that $k$ and $K$ are totally real number fields. Then we have
\[ 
\prod_{v\in \infty_k} \int_{X(k_v,T)}\tilde{\xi}_v(x) \ d\mu_v \sim 
\left( \frac{w_n\pi^{\frac{n(n+1)}{4}} }{\prod_{i=1}^n \Gamma(\frac{i}{2})} T^{\frac{n(n-1)}{2}}  \right)^{[k:\Q]}\prod_{v\in\infty_k}|\Delta_\chi|_v^{-\frac{1}{2}},\]
where $w_n$ is the volume of the unit ball in $\mathbb{R}^{\frac{n(n-1)}{2}}$ and $\Delta_\chi$ is the discriminant of $\chi(x)$.
\end{proposition}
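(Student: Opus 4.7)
The plan is to reduce the left-hand side to an unweighted real volume asymptotic and then invoke \cite[Lemma A.1]{Lee25}.

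First, I would show that $\tilde{\xi}_v \equiv 1$ on $X(k_v)$ for every $v\in\infty_k$. If $K/k$ is not Galois, this is immediate from Proposition \ref{prop:evaluation_intro}.(1). If $K/k$ is Galois, the total realness hypothesis forces $K\otimes_k k_v \cong \R^n$ at each $v\in\infty_k$: indeed $k_v\cong\R$ because $k$ is totally real, and every Archimedean completion $K_w$ is $\R$ because $K$ is totally real. Hence $K_v/k_v$ is not a field extension at any Archimedean place, and Proposition \ref{prop:evaluation_intro}.(2) applies. The content of the hypothesis of total realness is precisely that case (3) of Proposition \ref{prop:evaluation_intro} never arises at infinity, so that the delicate $\phi_{K_v/k_v}(\det g_x)$ term contributes no oscillation.

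Second, with every Archimedean character trivial, the integrand collapses to $1$ and the left-hand side becomes
\[
\prod_{v\in\infty_k}\int_{X(k_v,T)}\,d\mu_v,
\]
a product of $[k:\Q]$ copies of a real integral, since $k$ is totally real. I would then apply \cite[Lemma A.1]{Lee25} factor-by-factor to obtain, for each $v\in\infty_k$,
\[
\int_{X(k_v,T)}\,d\mu_v \sim \frac{w_n\,\pi^{n(n+1)/4}}{\prod_{i=1}^n \Gamma(i/2)}\,T^{n(n-1)/2}\,|\Delta_\chi|_v^{-1/2},
\]
and the stated asymptotic follows upon multiplying over $v\in\infty_k$.

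The analytic content of the statement, in particular the appearance of $|\Delta_\chi|_v^{-1/2}$ as the Jacobian relating the conjugation parametrization $\mathrm{T}_{x_0}\backslash\mathrm{GL}_n\xrightarrow{\sim} X$ to eigenvalue coordinates on the real orbit, is already packaged inside Lemma A.1 of \cite{Lee25}, so no recomputation is needed here. The only step genuinely specific to the present paper is the character-triviality reduction above, which is routine once the total realness assumption is unpacked. Accordingly, I do not foresee a serious obstacle in this Archimedean step; the real subtlety of the main theorem sits on the non-Archimedean side, where the normalized evaluation need not be trivial and local endoscopic data begin to intervene.
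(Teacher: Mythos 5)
Your proposal is correct and takes essentially the same approach as the paper: the total realness hypothesis is used exactly as you say to force $\tilde{\xi}_v\equiv 1$ at every Archimedean place (via Proposition \ref{prop:not_galois} in the non-Galois case and Proposition \ref{prop:evaluation}.(2) in the Galois case, since $K\otimes_k k_v\cong\R^n$), after which the left-hand side reduces to $\mathrm{vol}(d\mu_\infty,X(k_\infty,T))$ and the asymptotic is imported from \cite[Lemma A.1]{Lee25}. The only cosmetic difference is that the paper's proof of Proposition \ref{prop:eq_int_arch} explicitly records the intermediate step of diagonalizing $x_0$ over $k_v\cong\R$ and transporting the measure through the resulting $\mathrm{GL}_n(k_v)$-equivariant isomorphism $\mathrm{T}_\lambda(k_v)\backslash\mathrm{GL}_n(k_v)\cong\mathrm{T}_{x_0}(k_v)\backslash\mathrm{GL}_n(k_v)$ before invoking Lemma A.1, whereas you fold that bookkeeping into the citation.
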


In the case of a non-Archimedean place, 
Proposition \ref{prop:evaluation_intro} implies that $\tilde{\xi}_v$ is non-trivial only if $K/k$ is Galois and $K_v/k_v$ is a field extension.
In this case, we address the following cases separately; when $K_v/k_v$ is ramified and when $K_v/k_v$ is unramified.

\begin{proposition}\label{prop:int_nonarch}
Suppose that $v\in \Omega_k \setminus \infty_k$.
    \begin{enumerate}
        \item  (Proposition \ref{prop:result_case:trivial})
        If $\tilde{\xi}_v$ is trivial, then we have
    \[\int_{X(\mathcal{O}_{k_v})}\tilde{\xi}_v(x)\ d\mu_v=\int_{\mathrm{T}_{x_0}(k_v)\backslash \mathrm{GL}_n(k_v)}\mathbbm{1}_{\mathfrak{gl}_n(\mathcal{O}_{k_v})}(g^{-1}x_0g)\ d\mu_v.
    \]
     Here, the right hand side is called the orbital integral of $\mathfrak{gl}_n$ for $\mathbbm{1}_{\mathfrak{gl}_n(\mathcal{O}_{k_v})}$ and the characteristic polynomial $\chi(x)$ with respect to the measure $d\mu_v$ (see \cite[Section 1.3]{Yun13}), denoted by $\mathcal{O}_{\chi,d\mu_v}(\mathbbm{1}_{\mathfrak{gl}_{n}(\mathcal{O}_{k_v})})$

    \item If $\tilde{\xi}_v$ is non-trivial (and hence $K_v/k_v$ is a Galois field extension), then we have the following equations.
    \begin{itemize}
        \item (Proposition \ref{prop:ramify_zero}) If $K_v/k_v$ is a ramified Galois extension, then we have 
    \[
    \int_{X(\mathcal{O}_{k_v})}\tilde{\xi}_v(x) \ d\mu_v =0.
    \]
    \item (Proposition \ref{prop:fundlemforsln}) If $K_v/k_v$ is an unramified Galois extension and the characteristic of the residue field of $k_v$ is bigger than $n$, then we have
   \[
   \int_{X(\mathcal{O}_{k_v})} \tilde{\xi}_v(x)\ d\mu_v
   =\abs{\Delta_{\chi}}_v^{-\frac{1}{2}},
   \]
   where $\Delta_\chi$ is the discriminant of $\chi(x)$.
    \end{itemize}
    \end{enumerate}
\end{proposition}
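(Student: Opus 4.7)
The plan is to treat the three cases uniformly by first pulling the integral on $X(\mathcal{O}_{k_v})$ back along the orbit map $g\mapsto g^{-1}x_0g$, $\mathrm{T}_{x_0}(k_v)\backslash\mathrm{GL}_n(k_v)\xrightarrow{\sim}X(k_v)$, and then invoking the explicit description of $\tilde{\xi}_v$ from Proposition~\ref{prop:evaluation_intro}. Since $X$ is cut out inside $\mathfrak{gl}_n$ by the characteristic polynomial $\chi(x)$, the preimage of $X(\mathcal{O}_{k_v})$ under the orbit map is exactly $\{g:g^{-1}x_0g\in\mathfrak{gl}_n(\mathcal{O}_{k_v})\}$.

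For part (1), the trivial integrand pulls back to $\mathbbm{1}_{\mathfrak{gl}_n(\mathcal{O}_{k_v})}(g^{-1}x_0g)$, yielding $\mathcal{O}_{\chi,d\mu_v}(\mathbbm{1}_{\mathfrak{gl}_n(\mathcal{O}_{k_v})})$ by definition of the orbital integral.

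For part (2a), since $\chi(x)$ has prime degree $n$, any ramified Galois extension $K_v/k_v$ is forced to be totally ramified; hence $\Nm_{K_v/k_v}K_v^\times$ contains a uniformizer of $k_v$, and the quotient $k_v^\times/\Nm_{K_v/k_v}K_v^\times\cong\mathcal{O}_{k_v}^\times/\Nm(\mathcal{O}_{K_v}^\times)$ is cyclic of order $n$. By Proposition~\ref{prop:evaluation_intro}.(3) the pulled-back integrand becomes $\mathbbm{1}_{\mathfrak{gl}_n(\mathcal{O}_{k_v})}(g^{-1}x_0g)\cdot\psi(\det g)$ for a nontrivial character $\psi$ of this quotient. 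I would exploit right translation by $h\in\mathrm{GL}_n(\mathcal{O}_{k_v})$: this preserves the right-invariant quotient measure $d\mu_v$ and the integrality condition, because $(gh)^{-1}x_0(gh)=h^{-1}(g^{-1}x_0g)h$, while sending $\det g$ to $\det g\cdot\det h$. Since $\det:\mathrm{GL}_n(\mathcal{O}_{k_v})\twoheadrightarrow\mathcal{O}_{k_v}^\times$ is surjective, and $\mathcal{O}_{k_v}^\times\twoheadrightarrow k_v^\times/\Nm_{K_v/k_v}K_v^\times$ under the ramified assumption, the integral $I$ satisfies $I=\psi(a)I$ for every $a$ in the quotient, whence $I=0$ by nontriviality of $\psi$.

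For part (2b), $\psi$ is a nontrivial unramified character of $k_v^\times/\Nm_{K_v/k_v}K_v^\times\cong\Z/n\Z$, and by Lemma~\ref{lem:eval-endo} it corresponds to an unramified endoscopic datum $\kappa$ for $\mathrm{SL}_n$. The pulled-back integral is then a $\kappa$-orbital integral of $\mathbbm{1}_{\mathfrak{gl}_n(\mathcal{O}_{k_v})}$ at the regular semisimple class of $x_0$. The residue characteristic hypothesis $p>n$ places us in the range where the Langlands--Shelstad fundamental lemma for $\mathfrak{sl}_n$ applies, matching this $\kappa$-orbital integral with a stable orbital integral on the associated unramified endoscopic group, which evaluates explicitly to $|\Delta_\chi|_v^{-1/2}$ via the unramified transfer factor. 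The main obstacle lies in this last step: one must carefully verify that the endoscopic datum produced by Lemma~\ref{lem:eval-endo} matches the Langlands--Shelstad datum with the correct normalization of transfer factors, and then track these normalizations through the fundamental lemma to recover the precise constant $|\Delta_\chi|_v^{-1/2}$ on the stable side. Parts (1) and (2a) are by comparison essentially formal, reducing respectively to a tautology and an orthogonality argument.
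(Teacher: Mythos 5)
Parts (1) and (2a) are correct and essentially coincide with the paper's arguments. For (1), the paper also simply identifies $X(\mathcal{O}_{k_v})$ with $\{g : g^{-1}x_0 g \in \mathfrak{gl}_n(\mathcal{O}_{k_v})\}$ inside $\mathrm{T}_{x_0}(k_v)\backslash\mathrm{GL}_n(k_v)$ and reads off the orbital integral. For (2a), the paper picks a single unit $u \in \mathcal{O}_{k_v}^\times \setminus \Nm_{K_v/k_v}K_v^\times$ (which exists by local CFT since $K_v/k_v$ is ramified), sets $g = \mathrm{diag}(u,1,\ldots,1)$, and uses the $\mathrm{GL}_n(\mathcal{O}_{k_v})$-invariance of both $d\mu_v$ and $X(\mathcal{O}_{k_v})$ to get $I = \tilde{\xi}_v(x_0\cdot g)\, I$ with a nontrivial root of unity, forcing $I=0$. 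Your formulation via surjectivity of $\det$ onto $\mathcal{O}_{k_v}^\times$ and onto the norm quotient (valid because prime-degree ramified Galois is totally ramified, so inertia fills the Galois group) is the same orthogonality argument, presented slightly more uniformly.

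For (2b), however, you have identified the right strategy — pass to $\mathrm{SL}_n$-orbits, recognize the sum as a $\kappa$-orbital integral with $\kappa = \kappa_{\tilde{\xi}_v}$, and invoke the Langlands--Shelstad fundamental lemma for $\mathfrak{sl}_n$ — but you have left a genuine gap. The fundamental lemma for the Lie algebra applies to an element of $\mathfrak{sl}_n(\mathcal{O}_{k_v})$, and your base point $x_0 \in \mathfrak{gl}_n(\mathcal{O}_{k_v})$ typically has nonzero trace, so you cannot apply it directly. The paper resolves this with a translation step (Lemma~\ref{lem:translation}): since $\mathrm{char}(\kappa_v) > n$ and $n$ is prime, $n$ is invertible in $\mathcal{O}_{k_v}$, so one can replace $x_0$ by $x_0 + cI_n$ with $c = -\mathrm{tr}(x_0)/n$, which lies in $\mathfrak{sl}_n(\mathcal{O}_{k_v})$, and check that both the Brauer evaluation and the discriminant $\Delta_\chi$ are invariant under this shift. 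This is precisely the second role of the hypothesis $\mathrm{char}(\kappa_v) > n$, which your proposal attributes solely to the validity range of the fundamental lemma. Moreover, the "main obstacle" you flag — tracking the transfer-factor normalization to land on $|\Delta_\chi|_v^{-1/2}$ — is a real step but is resolved concretely in the paper: the endoscopic group is the torus $\mathrm{S}_{x_0,k_v}$, so its discriminant function is trivial and the stable orbital integral on the endoscopic side is just $\mathbbm{1}_{\mathfrak{s}_{x_0}(\mathcal{O}_{k_v})}(x_{0,H}) = 1$, while $|\mathcal{D}_{\mathrm{SL}_n}(x_0)|_v = |\Delta_\chi|_v$; one also needs the measure comparison on each $\mathrm{SL}_n(k_v)$-orbit (Lemma~\ref{lem:measure_comparison_gln_sln}) to identify $d\mu_v$ restricted to an orbit with $dh_v/ds_v'$. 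Without the translation step and these explicit evaluations, part (2b) of your proposal remains a plan rather than a proof.
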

Other than \cite[Section 6.2]{Lee25}, in our case, $x_0$ may not lie in $\mathrm{SL}_n(\mathcal{O}_k)$.
Hence, we work with local orbital integrals of the Lie algebra $\mathfrak{gl}_n$.
When $\tilde{\xi}_v$ is nontrivial and $K_v/k_v$ is ramified, 
our computation is completely independent of \cite[Section 6.2]{Lee25}; in this case, we extend the method used in the proof of \cite[Theorem 6.1]{WX}.
On the other hand, when $\tilde{\xi}_v$ is nontrivial and $K_v/k_v$ is unramified, we apply the fundamental lemma as in \cite[Section 6.2]{Lee25}, but for the Lie algebra setting (see \cite[Theorem 1]{Ngo}).
In our situation, the endoscopic data is implicitly given from the Brauer evaluation $\tilde{\xi}_v$: 
\begin{lemma}[Lemmas \ref{lem:charstofevalxi}–\ref{lem:endo}] \label{lem:eval-endo}
    The normalized evaluation $\tilde{\xi}_v$ determines an endoscopic data $(\kappa_{\tilde{\xi}_v}, \sigma_{\mathrm{S}_{x_0,k_v}})$ for $\mathrm{SL}_n$ (see Definition \ref{def:endo}), whose associated endoscopic group is $\mathrm{S}_{x_0,k_v}$. 
    Here, $\kappa_{\tilde{\xi}_v}$ is an element of $\mathrm{Hom}(X_*(\mathrm{S}_{x_0,k_v}),\mathbb{C}^\times)$, coming from the chracter of $\mathrm{H}^1(k_v,\mathrm{S}_{x_0,k_v})$ determined by $\tilde{\xi}_v$ (see the discussion following Lemma \ref{lem:charstofevalxi}), and
    $\sigma_{\mathrm{S}_{x_0,k_v}}$ denotes the automorphism on $X^*(\mathrm{S}_{x_0,k_v})$ induced by the Frobenius automorphism on $\mathrm{S}_{x_0,k_v}$.
\end{lemma}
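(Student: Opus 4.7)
The plan is to prove the lemma in two stages matching the split into Lemmas \ref{lem:charstofevalxi} and \ref{lem:endo}, bridged by local Tate--Nakayama duality for tori. First I would unpack the structure of $\mathrm{S}_{x_0, k_v}$: because $\chi(x)$ is irreducible, $x_0$ generates a copy of $K$ inside $\mathrm{M}_n(k)$, yielding $\mathrm{T}_{x_0} \simeq \mathrm{Res}_{K/k}\mathbb{G}_m$ and identifying $\mathrm{S}_{x_0}$ with the kernel of the norm. Restricting to the nontrivial case of Proposition \ref{prop:evaluation_intro}, in which $K_v/k_v$ is a Galois field extension, the short exact sequence
\[
1 \to \mathrm{S}_{x_0, k_v} \to \mathrm{Res}_{K_v/k_v}\mathbb{G}_m \xrightarrow{\Nm_{K_v/k_v}} \mathbb{G}_m \to 1
\]
combined with Hilbert 90 yields $\mathrm{H}^1(k_v, \mathrm{S}_{x_0, k_v}) \simeq k_v^\times / \Nm_{K_v/k_v} K_v^\times$, which local class field theory identifies with $\Gal(K_v/k_v)$ via $\phi_{K_v/k_v}$.

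For Lemma \ref{lem:charstofevalxi}, I would feed the formula from Proposition \ref{prop:evaluation_intro},
\[
\tilde{\xi}_v(x) = \exp 2\pi i\, \Psi(\xi)\bigl(\phi_{K_v/k_v}(\det g_x)\bigr),
\]
into the identifications above. The assignment $g_x \mapsto \det g_x$ lands in $k_v^\times$; since $\det$ restricted to $\mathrm{T}_{x_0}(k_v) = K_v^\times$ is the norm map, it descends to a map $\mathrm{T}_{x_0}(k_v)\backslash \mathrm{GL}_n(k_v) \to k_v^\times/\Nm_{K_v/k_v}K_v^\times$. Composing with $\Psi(\xi) \circ \phi_{K_v/k_v}$ thus produces the desired character of $\mathrm{H}^1(k_v, \mathrm{S}_{x_0, k_v})$, and a routine cocycle check confirms this factorization is canonical and independent of the choice of $g_x$.

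For Lemma \ref{lem:endo}, local Tate--Nakayama duality for tori identifies characters of $\mathrm{H}^1(k_v, \mathrm{S}_{x_0, k_v})$ with elements of $\pi_0\bigl(\widehat{\mathrm{S}_{x_0, k_v}}^{\,\sigma_{\mathrm{S}_{x_0,k_v}}}\bigr)$, so the character constructed above produces a semisimple element $\kappa_{\tilde{\xi}_v} \in \Hom(X_*(\mathrm{S}_{x_0, k_v}), \mathbb{C}^\times) = \widehat{\mathrm{S}_{x_0, k_v}}(\mathbb{C})$ invariant under the induced action of $\sigma_{\mathrm{S}_{x_0, k_v}}$. Embedding $\widehat{\mathrm{S}_{x_0, k_v}}$ as a maximal torus of $\widehat{\mathrm{SL}_n} = \mathrm{PGL}_n(\mathbb{C})$, the pair $(\kappa_{\tilde{\xi}_v}, \sigma_{\mathrm{S}_{x_0, k_v}})$ then becomes a candidate endoscopic datum in the sense of Definition \ref{def:endo}, whose associated endoscopic group has dual $\mathrm{Cent}^\circ(\kappa_{\tilde{\xi}_v}, \mathrm{PGL}_n(\mathbb{C}))$.

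The main obstacle is showing this centralizer coincides with $\widehat{\mathrm{S}_{x_0, k_v}}$, equivalently that $\kappa_{\tilde{\xi}_v}$ is a regular element of $\mathrm{PGL}_n(\mathbb{C})$. I expect to derive this from the prime degree hypothesis on $\chi(x)$ together with the nontriviality of $\Psi(\xi) \in \Hom(\Gal(K/k), \mathbb{Q}/\mathbb{Z})$: when $n$ is prime, any such nontrivial character has order exactly $n$, so its image under the Artin map corresponds to an element of the dual torus whose eigenvalues are pairwise distinct. Frobenius equivariance is automatic from the Tate--Nakayama construction, and once regularity is established the remaining endoscopy axioms (the identification of the $L$-group extension and the compatibility of Galois actions) become formal, giving the identification of the endoscopic group with $\mathrm{S}_{x_0, k_v}$.
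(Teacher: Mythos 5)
Your overall architecture matches the paper: establish the character $\kappa_{\tilde{\xi}_v}$ of $\mathrm{H}^1(k_v,\mathrm{S}_{x_0,k_v})$, pass to $\mathrm{Hom}(X_*(\mathrm{S}_{x_0,k_v}),\mathbb{C}^\times)$ via Tate--Nakayama, and then check the endoscopy conditions. The first stage (your analogue of Lemma \ref{lem:charstofevalxi}) is fine; you should still record why $\tilde{\xi}_v$ is constant on $\mathrm{SL}_n(k_v)$-orbits (Corollary \ref{cor:comparerationalstable}) before claiming it descends to a map on $\mathrm{H}^1$, but the idea that $\det$ restricted to $\mathrm{T}_{x_0}(k_v)\cong K_v^\times$ is the norm is exactly right and matches the paper's use of \cite[p.~388]{WX}.

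Two things to flag. First, a cosmetic but real mismatch: you phrase the endoscopy verification in the dual-group language (embedding $\widehat{\mathrm{S}}_{x_0,k_v}$ in $\mathrm{PGL}_n(\mathbb{C})$, centralizers, ``$L$-group extension''), which is the triple-based framework of \cite[Section 3.3]{Kal}. The paper's Definition \ref{def:endo} is the pair-based root-datum formulation of \cite[Section 7.1]{Kot84}; the substance is equivalent (``$\Phi_H^\vee=\emptyset$'' is exactly ``$\kappa_{\tilde{\xi}_v}$ is regular in $\mathrm{PGL}_n(\mathbb{C})$''), but you should align with the definition actually in play. Second, and more importantly, the crux of Lemma \ref{lem:endo} --- regularity of $\kappa_{\tilde{\xi}_v}$, i.e.\ $\Phi_H^\vee=\emptyset$ --- is left at the level of ``I expect to derive this.'' The inference ``$n$ prime and $\Psi(\xi)$ nontrivial of order $n$, hence distinct eigenvalues'' is not automatic: you must identify $X_*(\mathrm{S}_{x_0,k_v})/I_{\Lambda_v}X_*(\mathrm{S}_{x_0,k_v})\cong\mathbb{Z}/n\mathbb{Z}$, check that the image of the coroot $\epsilon_{1,2}$ is a generator, and use that $\kappa_{\tilde{\xi}_v}$ is $\sigma_{\mathrm{S}_{x_0,k_v}}$-invariant (equivalently, factors through the coinvariants) to get $\kappa_{\tilde{\xi}_v}(\epsilon_{i,j})=\kappa_{\tilde{\xi}_v}(\epsilon_{1,2})^{j-i}=\zeta_n^{j-i}$, which is $\neq 1$ precisely because $1\le j-i<n$. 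That explicit computation (equation (\ref{eq:kappaoperate}) in the paper) is the missing step, and without it ``pairwise distinct eigenvalues'' is an assertion rather than a proof. Your remark that $\sigma_{\mathrm{S}_{x_0,k_v}}$-invariance of $\kappa_{\tilde{\xi}_v}$ comes for free from Tate--Nakayama is correct and is the same observation as the paper's ``$\mathrm{id}-\sigma_{\mathrm{S}_{x_0,k_v}}\in I_{\Lambda_v}$.''
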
\noindent
We note that this datum is given by the pair described in \cite[Section 7.1]{Kot84}, rather than by the triple treated in \cite[Section 3.3]{Kal} and \cite[Section 4.1.2]{Lee25}.
This lemma suggests the existence of a more general connection between Brauer evaluations and the endoscopic data.




\subsubsection{\textbf{Conclusion}}
Combining Propositions \ref{prop:evaluation_intro}-\ref{prop:int_nonarch}, we conclude our main theorem. 
\begin{theorem}[Theorem \ref{thm:main_thm}]\label{thm:intro_main_thm}
Let  $\chi(x) \in \mathcal{O}_k[x]$ be an irreducible monic polynomial of degree $n$.
Let $X$ be an $\mathcal{O}_k$-scheme representing the set of $n\times n$ matrices whose characteristic polynomial is $\chi(x)$.
We define \[N(X, T)=\#\{x\in X(\mathcal{O}_k)\mid \norm{x}\leq T\},\]for $T>0$, where the norm $\norm{\cdot}$ is defined in (\ref{def:norm}).

Suppose that $k$ and $K = k[x]/(\chi(x))$ are totally real, and
if $k \neq \mathbb{Q}$, we further assume that $n$ is a prime number. 
We then have the following asymptotic formulas.
\begin{enumerate}
        \item If $K/k$ is not Galois or ramified Galois, then
        \[ N(X, T) \sim C_T \prod_{v \in \Omega_k \setminus\infty_k } \frac{\mathcal{O}_{\chi,d\mu_v}(\mathbbm{1}_{\mathfrak{gl}_n(\mathcal{O}_{k_v})})}{q_v^{S_v(\chi)}}.\]
    
        \item If $K/k$ is unramified Galois, and $\chi(0)=1$ or $K_v$ splits over $k_v$ for all $p$-adic places for $p \leq n$, then
        \[N(X, T) \sim C_T \left(  \prod_{v \in \Omega_k \setminus\infty_k }\frac{\mathcal{O}_{\chi,d\mu_v}(\mathbbm{1}_{\mathfrak{gl}_n(\mathcal{O}_{k_v})})}{q_v^{S_v(\chi)}} +n-1 \right).\]        
\end{enumerate}
Here, $\mathcal{O}_{\chi,d\mu_v}(\mathbbm{1}_{\mathfrak{gl}_n(\mathcal{O}_{k_v})})$ denotes the orbital integral of $\mathfrak{gl}_n$ for $\mathbbm{1}_{\mathfrak{gl}_n(\mathcal{O}_{k_v})}$ and the characteristic polynomial $\chi(x)$ with respect to the measure $d\mu_v$ defined in (\ref{eq:quotient_measure}), $S_v(\chi)$ is the $\mathcal{O}_{k_v}$-module length between $\mathcal{O}_{K_v}$ and $\mathcal{O}_{k_v}[x]/(\chi(x))$, 
    \[C_T:= |\Delta_k|^{\frac{-n^2+n}{2}} \frac{R_K h_K \sqrt{\abs{\Delta_K}}^{-1}}{R_k h_k \sqrt{\abs{\Delta_k}}^{-1}} \left(\prod_{i=2}^n \zeta_k(i)^{-1}\right)\left( \frac{2^{n-1}w_n\pi^{\frac{n(n+1)}{4}} }{\prod_{i=1}^n \Gamma(\frac{i}{2})} T^{\frac{n(n-1)}{2}}  \right)^{[k:\Q]},\]
    and we use the following notations:
    \begin{itemize}
        \item $R_F$ is the regulator of $F$, $h_F$ is the class number of $\mathcal{O}_F$, and $\Delta_F$ is the discriminant of $F/\mathbb{Q}$ for $F=k$ or $K$. 
        \item $w_n$ is the volume of the unit ball in $\mathbb{R}^{\frac{n(n-1)}{2}}$, and $\zeta_k$ is the Dedekind zeta function of $k$.
    \end{itemize}
\end{theorem}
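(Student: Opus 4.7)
The plan is to start from the Wei--Xu asymptotic of Proposition~\ref{prop:main_idea_intro} and to evaluate each factor on its right-hand side using three ingredients already in place: the description of $\tilde{\xi}_v$ in Proposition~\ref{prop:evaluation_intro}, the Archimedean integrals of Proposition~\ref{prop:int_arch}, and the non-Archimedean integrals of Proposition~\ref{prop:int_nonarch}. The bookkeeping step is to replace the Tamagawa measure $m^X=\prod_v m^X_v$ by the concrete product $\prod_v d\mu_v$ via Lemma~\ref{lem:measure_comparison_m_mu}. This replacement is what produces the global constant $C_T$: the factor $|\Delta_k|^{(-n^2+n)/2}$ comes from the conversion of the ambient measure on $\mathfrak{gl}_n$; the ratio $R_K h_K \sqrt{|\Delta_K|}^{-1}/R_k h_k \sqrt{|\Delta_k|}^{-1}$ is the comparison of Tamagawa volumes of $\mathrm{T}_{x_0}\simeq \mathrm{Res}_{K/k}\mathbb{G}_m$ with those of $\mathbb{G}_m/k$; and $\prod_{i=2}^n\zeta_k(i)^{-1}$ is the standard Tamagawa factor for $\mathrm{GL}_n$. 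The Archimedean part of $C_T$ then falls out of Proposition~\ref{prop:int_arch}, once one observes that Proposition~\ref{prop:evaluation_intro}.(2), together with the total realness of $K$, forces $\tilde{\xi}_v$ to be trivial at every Archimedean place.

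After $C_T$ is extracted, what remains is to sum $\prod_{v\nmid\infty}\int_{X(\mathcal{O}_{k_v})}\tilde{\xi}_v(x)\,d\mu_v$ over $\xi\in \Br X/\Br k$. Via the isomorphism $\Psi$ of Proposition~\ref{prop:evaluation_intro}.(3), this group is trivial when $K/k$ is non-Galois and cyclic of prime order $n$ when $K/k$ is Galois. For Case~(1), in the non-Galois sub-case only the term $\xi=0$ appears, and Proposition~\ref{prop:int_nonarch}.(1) gives $\prod_{v\nmid\infty}\mathcal{O}_{\chi,d\mu_v}(\mathbbm{1}_{\mathfrak{gl}_n(\mathcal{O}_{k_v})})/q_v^{S_v(\chi)}$, the denominator $q_v^{S_v(\chi)}$ being the local index $[\mathcal{O}_{K_v}:\mathcal{O}_{k_v}[x]/(\chi(x))]$ picked up by the measure conversion. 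In the ramified Galois sub-case, fix a place $v_0$ of $k$ that ramifies in $K/k$; since $n$ is prime the decomposition group at $v_0$ equals all of $\Gal(K/k)$, so every nontrivial $\xi$ restricts to a nontrivial character of $\Gal(K_{v_0}/k_{v_0})$, and Proposition~\ref{prop:int_nonarch}.(2) forces the integral at $v_0$ to vanish. In either sub-case only $\xi=0$ survives, giving the first formula.

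For Case~(2), with $K/k$ unramified Galois, the vanishing argument of Case~(1) is unavailable and all $n-1$ nontrivial characters contribute. The key claim is that for each nontrivial $\xi$ one has $\prod_{v\nmid\infty}\int_{X(\mathcal{O}_{k_v})}\tilde{\xi}_v(x)\,d\mu_v=1$. Combining the two cases of Proposition~\ref{prop:int_nonarch}: at a finite $v$ where $K_v/k_v$ splits, $\tilde{\xi}_v$ is trivial and the suitably normalized orbital integral equals $1$ by the Langlands--Shelstad fundamental lemma applied via Lemma~\ref{lem:eval-endo}; at a finite $v$ where $K_v/k_v$ is a nontrivial unramified Galois extension, Proposition~\ref{prop:int_nonarch}.(2) gives $|\Delta_\chi|_v^{-1/2}$. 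The product formula $\prod_{v\in\Omega_k}|\Delta_\chi|_v=1$, combined with the Archimedean $|\Delta_\chi|_v^{-1/2}$ factors already absorbed into $C_T$ and the triviality of $|\Delta_\chi|_v$ at almost all finite $v$, collapses this product to $1$. Summing the trivial contribution $\prod_v \mathcal{O}_{\chi,d\mu_v}/q_v^{S_v(\chi)}$ with the $n-1$ nontrivial contributions equal to $1$ yields the second formula. The main obstacle is the identification of the endoscopic interpretation of $\tilde{\xi}_v$ (Lemma~\ref{lem:eval-endo}) and the invocation of Ng\^o's fundamental lemma in the $\mathfrak{sl}_n$ Lie-algebra setting, which is precisely why the hypothesis ``$\chi(0)=1$ or $\chi$ splits at all $p$-adic places for $p\leq n$'' must be imposed.
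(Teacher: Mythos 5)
Your overall plan follows the paper's proof almost step for step: start from the Wei--Xu asymptotic, convert $m^X$ to $\prod_v d\mu_v$ via Lemma~\ref{lem:measure_comparison_m_mu}, handle the Archimedean places with Proposition~\ref{prop:eq_int_arch}, kill the nontrivial $\xi$'s at a ramified place in Case~(1), and show each nontrivial $\xi$ contributes~$1$ in Case~(2). The structure is right, but there is one concrete misattribution and one piece of garbled bookkeeping in Case~(2) that would block a careful reader.

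The real issue is your treatment of the split places in Case~(2). You write that ``at a finite $v$ where $K_v/k_v$ splits, $\tilde\xi_v$ is trivial and the suitably normalized orbital integral equals $1$ by the Langlands--Shelstad fundamental lemma applied via Lemma~\ref{lem:eval-endo}.'' This is not what happens. When $\tilde\xi_v$ is trivial the endoscopic datum degenerates to $\kappa=1$ and the fundamental lemma says nothing nontrivial; Lemma~\ref{lem:eval-endo} is invoked in the paper only at the inert unramified places, where $\tilde\xi_v$ is nontrivial and Ng\^o's $\mathfrak{sl}_n$ fundamental lemma (Proposition~\ref{prop:fundlemforsln}) gives $|\Delta_\chi|_v^{-1/2}$. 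At the split places the paper instead invokes Yun's explicit split-case formula (Corollary~4.10 of \cite{Yun13}): since $\chi$ splits into distinct linear factors over $k_v$, the orbital integral factors as a product of orbital integrals for linear polynomials, each of which is~$1$, leaving exactly $\mathcal{O}_{\chi,d\mu_v}(\mathbbm{1}_{\mathfrak{gl}_n(\mathcal{O}_{k_v})}) = q_v^{S_v(\chi)}$. That is the step that makes the local factor $\frac{1}{q_v^{S_v(\chi)}}\int_{X(\mathcal{O}_{k_v})}\tilde\xi_v\,d\mu_v = 1$ at split places, and you have not supplied it.

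Secondly, the $|\Delta_\chi|_v$ bookkeeping is stated imprecisely. The Archimedean factors $\prod_{v\in\infty_k}|\Delta_\chi|_v^{-1/2}$ from Proposition~\ref{prop:eq_int_arch} are not absorbed into $C_T$ (note $C_T$ contains no $|\Delta_\chi|$ terms); rather, they cancel against the $\prod_{v\notin\infty_k}|\Delta_\chi|_v^{-1/2}$ factors hidden inside $C_{\mathrm{meas}}$ in Lemma~\ref{lem:measure_comparison_m_mu}, using the product formula $\prod_{v\in\Omega_k}|\Delta_\chi|_v=1$ once and for all. After that cancellation the claim to prove is that $\prod_{v\notin\infty_k}\frac{1}{q_v^{S_v(\chi)}}\int_{X(\mathcal{O}_{k_v})}\tilde\xi_v\,d\mu_v=1$ for nontrivial $\xi$, and the paper does this by showing each local factor is literally equal to~$1$ (via Yun at split $v$, via $\ord_v(\Delta_\chi)=2S_v(\chi)$ at unramified inert $v$), not by a second global product-formula argument as you sketch. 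Your ``key claim'' as first stated also drops the $q_v^{-S_v(\chi)}$ normalization, which makes it false as written. Fixing the split-place computation and restating the claim with the normalization in place would close the gap.
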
\noindent
    This result is identical to \cite[Theorem 6.7]{Lee25} when $\chi(0)=1$. Indeed, when $x_0 \in \mathrm{SL}_n(\mathcal{O}_k)$, we can replace $\mathbbm{1}_{\mathfrak{gl}_n(\mathcal{O}_{k_v})}$ with $\mathbbm{1}_{\mathrm{GL}_n(\mathcal{O}_{k_v})}$ in the formulas of Theorem \ref{thm:main_thm}.
In Remark \ref{rmk:our_result_EMS}, we show that the above formula coincides with (\ref{prop:EMS_intro}) under the assumptions in (\ref{eq:conditiononEMS}).

\begin{remark}\label{rmk:SOfinite}
    According to \cite[Theorem 1.5]{Yun13}, $\mathcal{O}_{\chi,d\mu_v}(\mathbbm{1}_{\mathfrak{gl}_n(\mathcal{O}_{k_v})})$ is an integer.
    Since $S_v(\chi)=0$ and $\mathcal{O}_{\chi,d\mu_v}(\mathbbm{1}_{\mathfrak{gl}_n(\mathcal{O}_{k_v})})=1$ for all but finitely many places $v\in \Omega_k\setminus \infty_k$, the product \[\prod_{v \in \Omega_k \setminus\infty_k } \frac{\mathcal{O}_{\chi,d\mu_v}(\mathbbm{1}_{\mathfrak{gl}_n(\mathcal{O}_{k_v})})}{q_v^{S_v(\chi)}}\] is a finite product.

    In particular, when $n=2$ or $n=3$, the closed formula for the product $\displaystyle\prod_{v\in \Omega_k\setminus \infty_k}\mathcal{O}_{\chi,d\mu_v}(\mathbbm{1}_{\mathfrak{gl}_n(\mathcal{O}_{k_v})})$ is given by \cite{CKL} (see Proposition \ref{prop:productoforbitalintegral}). 
\end{remark}
        

        

\vspace{1em}

\textbf{Organizations.}
In Section \ref{sec:22}, we review the homogeneous space structures of $\mathrm{GL}_n$ and $\mathrm{SL}_n$ on $X$, and describe $\mathrm{GL}_n(k_v)$- and $\mathrm{SL}_n(k_v)$-orbits on $X(k_v)$ for a local field $k_v$.
We also introduce the measures used in this paper, including the Tamagawa measure, in Section \ref{sec:22}.
In Section \ref{sec:Brauer}, we recall the notions of Brauer group and Braure evaluation.
To obtain the asymptotic formula for $N(X,T)$, we make use of Theorem \ref{eq:main_idea_intro}.
For this, we compute the Brauer evaluation $\xi_v$ on $X$ in Section \ref{sec:5} and the corresponding local integrals in Section \ref{sec:integral_computation}.
Combining these computations, we deduce in Section \ref{sec:main} the desired formula expressed in terms of orbital integrals of $\mathfrak{gl}_n$.
In Appendix \ref{app:orbital_integral}, we provide the product of local orbital integrals when $n=2$ and $3$, following \cite{CKL}.
\vspace{1em}

\textbf{Acknowledgments.}
We would like to thank our advisor Sungmun Cho for suggesting this problem and encouraging us, and Sug Woo Shin for helpful comments on Section \ref{sec:unram_cal}.
We also thank Seewoo Lee for meaningful discussions on the exterior product of differential forms
and Fei Xu for his lecture note on Brauer-Manin obstruction, which was given at POSTECH in 2019.
\subsection{Notations}
\begin{itemize}
\item Let $k$ be a number field with ring of integers $\mathcal{O}_k$, and $\bar{k}$ be an algebraic closure of $k$.
\item
Suppose that $A$ is a commutative $\mathcal{O}_k$-algebra. We define the following schemes over $A$.
\[\left\{\begin{array}{l}
\textit{$\mathrm{GL}_{n, A}:$ the general linear group scheme over $A$, $\mathfrak{gl}_{n, A}:$ the Lie algebra of $\mathrm{GL}_{n, A}$};\\
\textit{$\mathrm{SL}_{n, A}:$ the special linear group scheme over $A$, $\mathfrak{sl}_{n, A}:$ the Lie algebra of $\mathrm{SL}_{n, A}$};\\
\textit{$\mathrm{M}_{n,A}:$ the scheme over $A$ representing the set of $n \times n$ matrices.}
\end{array}
\right.\]
If there is no confusion, we sometimes omit $A$ in the subscript to express schemes over $k$. 

\item
Let $\chi(x) \in \mathcal{O}_k[x]$ be an irreducible monic polynomial of degree $n$.
We define $X$ to be the closed subscheme of $\mathrm{M}_{n,\mathcal{O}_k}$ representing the set of $n \times n$ matrices whose characteristic polynomial is $\chi(x)$.

\item Let $\Omega_k$ be the set of places and $\infty_k$ be the set of Archimedean places.
For $v\in \Omega_k$, we denote the normalized absolute value associated with $v$ by $|\cdot|_v$ and the completion with respect to $|\cdot|_v$ by $k_v$.

\item 
For $v \in \Omega_k \setminus \infty_k$, we define the following notations
\[\left\{\begin{array}{l}
\mathcal{O}_{k_v}:\textit{ the ring of integers of $k_v$};\\
\pi_v:\textit{ a uniformizer of $\mathcal{O}_{k_v}$};\\
\kappa_v:\textit{ the residue field of $\mathcal{O}_{k_v}$};\\
q_v:\textit{ the cardinality of the residue field $\kappa_v$}.
\end{array}
\right.\]
For an element $x\in k_v$, the exponential order of $x$ with respect to the maximal ideal in $\mathcal{O}_{k_v}$ is written by $\ord_v(x)$. We then have that $|x|_v=q_v^{-\ord_v(x)}$.

\item
We define $K= k[x]/(\chi(x))$ with ring of integers $\mathcal{O}_K$.
For $v \in \Omega_k$, we define  $K_v=k_v[x]/(\chi(x))$ by $K_v$ and, for $v\in\Omega_k\setminus \infty_k$, denote its ring of integers by $\mathcal{O}_{K_v}$.

\item
For $v \in \Omega_k$, let $B_v(\chi)$ be an index set in bijection with the irreducible factors $\chi_{v,i}$ of $\chi$ over $k_v$.
For $i \in B_v(\chi)$, we define the following notations
\[
\left\{
\begin{array}{l}
    \textit{$K_{v,i} = k_v[x]/(\chi_{v,i}(x))$ with ring of integers $\mathcal{O}_{K_{v,i}}$};\\
    \kappa_{K_{v,i}}:\textit{ the residue field of $K_{v,i}$}.
\end{array}
\right.
\]
Here, we note that $K_v \cong \prod_{i \in B_v(\chi)} K_{v,i}$ and $\mathcal{O}_{K_v} \cong \prod_{i \in B_v(\chi)}\mathcal{O}_{K_{v,i}}$.

\item
For a finite field extension $E/F$ where $F$ is $\Q$ or $k_v$, let $\Delta_{E/F}$ be the discriminant ideal of $E/F$.
\begin{itemize}
    \item 
If $F = \Q$, then we denote $\Delta_{E/\Q}$ by $\Delta_E$, and $\abs{\Delta_{E}}$ by the absolute value of a genearator of $\Delta_{E}$ as an ideal in $\mathbb{Z}$.
\item
If $F = k_v$, then $\ord_v(\Delta_{E/k_v})$ (resp. $|\Delta_{E/k_v}|_v$) denotes the exponential order (resp. the normalized absolute value) of a generator of $\Delta_{E/k_v}$ as an ideal in $\mathcal{O}_{k_v}$.
\end{itemize}

\item 
For a field $F$, let $\Delta_f \in F$ be the discriminant of a polynomial $f(x) \in F[x]$.

\item
We define norms for $x \in X(k)$ by
\[\norm{x}_v = \sqrt{\sum_{i,j = 1}^{n} |x_{ij}|_v^2} \ \  \text{ and } \ \norm{x}_\infty = \max_{v\in \infty_k} \norm{x}_v,\]
where $x$ embeds to $(x_{ij}) \in \mathrm{M}_n(k)$.
For $T > 0$, we define
\[N(X,T) = \# \{x \in X(\mathcal{O}_k) \mid \norm{x}_\infty\leq T\}.\]
We note that $N(X, T)$ is finite since $\mathcal{O}_k$ is discrete in $k_v$ for each Archimendian place $v\in \infty_k$.

\item 
For two functions $A(T)$ and $B(T)$ for $T$, we denote $A(T)\sim B(T)$ if $\lim\limits_{T\rightarrow \infty}\frac{A(T)}{B(T)}=1$.
\end{itemize}

\section{Homogeneous space \texorpdfstring{$X$}{X}}\label{sec:22}

\subsection{Homogeneous space}\label{subsec:homogeneous}
$X$ is a homogeneous space of both $\mathrm{GL}_{n}$ and $\mathrm{SL}_{n}$.
We fix an integral matrix $x_0 \in X(\mathcal{O}_k)$ whose characteristic polynomial is $\chi(x)$. 
For example, we can choose $x_0$ as the companion matrix of $\chi(x)$.
Define a map
\begin{equation}\label{eq:hommap}\varphi_{\mathrm{GL}_n}:\mathrm{GL}_n\rightarrow X \ (\text{resp. } \varphi_{\mathrm{SL}_n}:\mathrm{SL}_n \rightarrow X),\ g\mapsto g^{-1}x_0 g,\end{equation}and define a right $\mathrm{GL}_n$-action (resp. $\mathrm{SL}_n$-action) on $X$ by the conjugation $x \cdot g = g^{-1} x g$. 
Hence $\varphi_{\mathrm{GL}_n}$ (resp. $\varphi_{\mathrm{SL}_n}$) is an equivariant map under the right action of $\mathrm{GL}_n$ (resp. $\mathrm{SL}_n$).

\begin{proposition}\label{prop:homogeneous}
\begin{enumerate}
    \item 
    $X$ is a homogeneous space of $\mathrm{GL}_n$ (resp. $\mathrm{SL}_n$) with respect to conjugation by $\mathrm{GL}_n$ (resp. $\mathrm{SL}_n$) and $\varphi_{\mathrm{GL}_n}$ (resp. $\varphi_{\mathrm{SL}_n}$) induces an following isomorphism
    \[
    X\cong \mathrm{T}_{x_0}\backslash \mathrm{GL_n} \ (resp.\ X\cong \mathrm{S}_{x_0}\backslash \mathrm{SL}_n),
    \]
    where $\mathrm{T}_{x_0}$ (resp. $\mathrm{S}_{x_0}$) denotes the stabilizer of $x_0$ under the $\mathrm{GL}_n$-action (resp. the $\mathrm{SL}_n$-action) on $X$.
    \item
    We have the following isomorphism for the stabilizer of $x_0$
    \[\mathrm{T}_{x_0} \cong \mathrm{R}_{K/k}(\mathbb{G}_{m,K})
    \textit{ $($resp. $\mathrm{S}_{x_0} \cong \mathrm{R}^{(1)}_{K/k}(\mathbb{G}_{m,K}))$},\]
    where we use the following notations:
    \[\left\{
    \begin{array}{l}
    \textit{$\mathrm{R}_{K/k}(\mathbb{G}_{m,K})$ is the Weil restriction of $\mathbb{G}_{m,K}$ and};\\
    \textit{$\mathrm{R}^{(1)}_{K/k}(\mathbb{G}_{m,K}):=\ker(\Nm_{K/k}: \mathrm{R}_{K/k}(\mathbb{G}_{m,K}) \to \mathbb{G}_{m,k})$ is the norm-1 torus of $K/k$}.
    \end{array}
    \right.
    \]
\end{enumerate}
\end{proposition}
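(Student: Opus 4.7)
I would prove the proposition in two steps: first compute the stabilizers $\mathrm{T}_{x_0}$ and $\mathrm{S}_{x_0}$ functorially, and then show that the orbit map induces an isomorphism of schemes.

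\textbf{Computing the stabilizers.} Choose $x_0$ to be the companion matrix of $\chi(x)$. Since $\chi$ is irreducible of degree $n$, the minimal polynomial of $x_0$ coincides with its characteristic polynomial, so $x_0$ is non-derogatory. A direct computation with the companion form shows that for any commutative $k$-algebra $A$, the centralizer of $x_0$ in $\mathrm{M}_n(A)$ equals the polynomial subalgebra $A[x_0] \cong A\otimes_k k[x_0]$. Identifying $k[x_0] \cong K$ via $x_0 \mapsto x \bmod \chi$, this gives a functorial isomorphism with $A\otimes_k K = \mathrm{R}_{K/k}(\mathbb{A}^1_K)(A)$. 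Passing to the unit group, and noting that $k^n$ is free of rank one as a $k[x_0]$-module — so that $\det\bigl(f(x_0)\bigr) = \Nm_{K/k}(f)$ for $f \in K$ — yields $\mathrm{T}_{x_0}(A) = (A\otimes_k K)^{\times} = \mathrm{R}_{K/k}(\mathbb{G}_{m,K})(A)$. Imposing $\det = 1$ cuts out $\mathrm{S}_{x_0}(A) = \mathrm{R}^{(1)}_{K/k}(\mathbb{G}_{m,K})(A)$, and Yoneda yields part~(2).

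\textbf{Transitivity and the quotient isomorphism.} Because $\mathrm{char}(k) = 0$, $\chi$ is separable, so every $x \in X(\bar k)$ is diagonalizable over $\bar k$ with the same multiset of eigenvalues as $x_0$, hence $\mathrm{GL}_n(\bar k)$-conjugate to $x_0$. Thus $\varphi_{\mathrm{GL}_n}$ is surjective on $\bar k$-points. Moreover, regular semisimplicity of $x_0$ (equivalent to separability of $\chi$) makes the Chevalley map $\mathrm{M}_n \to \mathbb{A}^n_k$ smooth at $x_0$, so $X$ is smooth over $k$ of dimension $n^2-n$. The map $\varphi_{\mathrm{GL}_n}$ factors as $\mathrm{GL}_n \twoheadrightarrow \mathrm{T}_{x_0}\backslash\mathrm{GL}_n \xrightarrow{\bar\varphi} X$, and the differential of $\varphi_{\mathrm{GL}_n}$ at the identity is $M \mapsto [x_0,M]$, whose kernel is exactly $\mathrm{Lie}(\mathrm{T}_{x_0})$. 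By $\mathrm{GL}_n$-equivariance, $\bar\varphi$ has injective differential everywhere; being a morphism between smooth $k$-schemes of the same dimension, it is therefore étale. Combined with bijectivity on $\bar k$-points, $\bar\varphi$ is a finite étale cover of degree one, hence an isomorphism.

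\textbf{The $\mathrm{SL}_n$-case and main obstacle.} For the $\mathrm{SL}_n$-variant, given $g \in \mathrm{GL}_n(\bar k)$ with $g^{-1}x_0 g = x$, I need $t \in \mathrm{T}_{x_0}(\bar k)$ with $\det(tg) = 1$. Since $K \otimes_k \bar k \cong \bar k^n$, the norm map $\Nm_{K/k}$ becomes the coordinate product on $\bar k^{\times n}$ and is therefore surjective on $\bar k$-points; such a $t$ exists, so $\mathrm{SL}_n(\bar k)$ acts transitively on $X(\bar k)$, and the étale-bijection argument applies verbatim to $\mathrm{S}_{x_0}\backslash\mathrm{SL}_n \to X$. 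The most delicate point in the proof is the smoothness of $X$, required to deduce the isomorphism from the étale bijection; this is immediate in characteristic zero from separability of $\chi$, so no serious obstacle arises.
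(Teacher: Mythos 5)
Your proof is correct, but it takes a genuinely different route from the paper's at several points, and it is worth noting what each buys. For part~(2), you invoke the classical fact that the centralizer of a non-derogatory (companion) matrix in $\mathrm{M}_n(A)$ is the commutative subalgebra $A[x_0]\cong A\otimes_k K$, together with the identity $\det(f(x_0))=\Nm_{K/k}(f)$ coming from the regular representation; the paper instead builds an explicit functorial bijection $\Phi_R:\mathrm{T}_{x_0}(R)\to(R[x]/(\chi))^\times$ by choosing a basis $\{f_1,\dots,f_n\}$ of $V=R[x]/(\chi)$ adapted to the rational canonical form and writing down the inverse $v\mapsto g_v$ by hand. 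Your version is shorter and more conceptual; the paper's is more self-contained and does not appeal to an external ``direct computation.'' For part~(1), the paper establishes transitivity on $\bar k$-points via rational canonical form and then simply asserts the quotient identification, relying implicitly on the standard theory of quotients of affine algebraic groups by closed subgroups; you instead make the scheme-theoretic step explicit with a smoothness-plus-étale-degree-one argument (smoothness of $X$ from separability of $\chi$, injective differential via the adjoint action, dimension count). This fills in a gap the paper leaves to the reader, at the cost of a slightly longer argument. Finally, for $\mathrm{SL}_n$-transitivity, the paper rescales $g$ by $\sqrt[n]{\det g}$ (a scalar, hence still conjugating $x_0$ to $x$), whereas you translate $g$ by $t\in\mathrm{T}_{x_0}(\bar k)$ with $\Nm_{K/k}(t)=(\det g)^{-1}$, using surjectivity of the norm over $\bar k$. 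Both are valid; yours avoids extracting $n$-th roots and fits naturally with the torus description from part~(2).

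One small caveat: the claim ``centralizer of the companion matrix in $\mathrm{M}_n(A)$ is $A[x_0]$'' for arbitrary commutative $A$ does deserve a one-line justification (it follows because $\{1,x_0 e_1,\dots,x_0^{n-1}e_1\}$ is an $A$-basis of $A^n$, so any $M$ commuting with $x_0$ is determined by and equal to the polynomial $\sum a_i x_0^i$ where $Me_1=\sum a_i x_0^i e_1$). As written you call it a ``direct computation,'' which is fine, but since the paper proves this in detail you should be aware that it is the technical heart of part~(2) and not merely a remark.
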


\begin{proof}
    Since every square matrix over a field has a rational canonical form determined by its characteristic polynomial, for $x \in X(\bar{k})$, there exists $g \in \mathrm{GL}_n(\bar{k})$ such that $x = g^{-1} x_0 g$.
    Therefore, $X$ is a homogeneous space of $\mathrm{GL}_n$.
    Similarly, we can show that $X$ is also a homogeneous space of $\mathrm{SL}_n$ by choosing $h = \frac{g}{\sqrt[n]{\det g}} \in \mathrm{SL}_n(\bar{k})$.
    We then obtain the following identifications induced by $\varphi_{\mathrm{GL}_n}$ and $\varphi_{\mathrm{SL}_n}$ defined in (\ref{eq:hommap}),
\[X \cong \mathrm{T}_{x_0} \backslash \mathrm{GL}_n \cong \mathrm{S}_{x_0} \backslash \mathrm{SL}_n.\]

We now prove the second statement.
Since $x_0$ has a rational canonical form, there exists an invertible matrix $Q \in \mathrm{GL}_n(k)$ such that
\begin{equation}\label{eq:rational_canonical}
    x_0 = Q^{-1}
\begin{pmatrix}
	0 & \cdots & 0 & -a_0 \\
	1 & \cdots & 0 & -a_1 \\
	\vdots & \ddots & \vdots & \vdots \\
	0 & \cdots & 1 & -a_{n-1} \\
\end{pmatrix} Q.
\end{equation}
For each $1\leq i\leq n$, we define $f_i \in K = k[x]/(\chi(x))$ by
\[f_i = \sum_{k=1}^n Q_{ki} x^{k-1}.\]
Since $\{1, x, \ldots, x^{n-1}\}$ is a $k$-basis of $K$, $\{f_1, f_2, \ldots, f_n\}$ is also a basis.

For each $k$-algebra $R$, we now construct an explicit bijection 
$\mathrm{T}_{x_0} (R) \cong \mathrm{R}_{K/k}(\mathbb{G}_{m, K})(R)$.
Let $V$ be a free $R$-module $R[x]/(\chi(x))$.
With respect to the basis $\{f_1, \ldots, f_n\}$ of $V$, we have the following identification
\[\Phi:\mathrm{M}_n(R)\xrightarrow{\sim} \mathrm{End}_R(V).\]
Since the companion matrix in (\ref{eq:rational_canonical}) represents the left-multiplication by $x$ with respect to the basis $\{1, x, \ldots, x^{n-1}\}$, we have that $\Phi(x_0)$ equals to $(f(x)\mapsto x f(x)) \in \End_R(V)$ by the change of basis.

For $g \in \mathrm{M}_n(R)$, $g x_0 = x_0 g$ in $\mathrm{M}_n(R)$ if and only if $\Phi(g)(x v) = x \Phi(g)(v)$ for each $v\in V$. 
We then identify the stabilizer $\mathrm{T}_{x_0}(R) \subset \mathrm{M}_n(R)$ with the set of $R$-linear automorphisms on $V$ that commute with $x$, under the isomorphism $\Phi$.
By the $R$-linearity, we also have
\begin{equation}\label{eq:Tx0}
    \mathrm{T}_{x_0}(R) = \{ g \in \mathrm{M}_n(R) \mid g \text{ is invertible and } \Phi(g)(v_1 v_2) = v_1 \Phi(g)(v_2) \text{ for all } v_1, v_2 \in V\}.
\end{equation}

We now define the following $R$-linear map
\[\Phi_R : \mathrm{T}_{x_0}(R) \to (R[x]/(\chi(x)))^\times,\ g \mapsto \Phi(g)(1_R),\]
where $1_R$ is the identity element in $R$. 
The image of $\Phi_R$ is contained in $(R[x]/(\chi(x)))^\times$ since, for $g \in \mathrm{T}_{x_0}(R)$,
\[\Phi_R(g) \Phi_R(g^{-1})=\Phi(g)(\Phi(g)^{-1} (1_R))= 1_R.\]

We assert that $\Phi_R$ is a bijection by constructing the inverse.
For $v \in (R[x]/(\chi(x)))^\times$, let $g_v \in \mathrm{M}_n(R)$ be a matrix whose $i$-th column consists of the coefficients of $f_i v \in V$ with respect to the basis $\{f_1, \ldots, f_n\}$.
Then we have $\Phi(g_v)(f) = f v$ for all $f \in V$.
Since $v$ is invertible, $g_v$ is contained in $\mathrm{T}_{x_0}(R)$ using the identification (\ref{eq:Tx0}).
Thus, $(R[x]/(\chi(x)))^\times\to \mathrm{T}_{x_0}(R),\ v \mapsto g_v$ defines an inverse of $\Phi_R$.
One can deduce that $\Phi_R$ is functorial in $R$ and this concludes that $\mathrm{T}_{x_0} \cong \mathrm{R}_{K/k}(\mathbb{G}_{m,K})$ as $k$-schemes.

Moreover, since $\mathrm{Nm}_{K/k} : \mathrm{R}_{K/k}(\mathbb{G}_{m,K}) \to \mathbb{G}_{m,k}$ commutes with $\det : \mathrm{GL}_n \to \mathbb{G}_{m,k}$, it follows that $\mathrm{S}_{x_0} \cong \ker \mathrm{Nm}_{K/k} = \mathrm{R}^{(1)}_{K/k}(\mathbb{G}_{m, K})$.
\end{proof}

\subsection{\texorpdfstring{Orbits of  $\mathrm{GL}_n(k_v)$}{GLn(kv)}-action and \texorpdfstring{$\mathrm{SL}_n(k_v)$}{SLn(kv)}-action in \texorpdfstring{$X(k_v)$}{X(kv)}}\label{sec:geominte}

We recall useful properties of a homogeneous space $X$.
In the previous section, we have the following identifications for $X$ over $k$:
\[ 
X\cong \mathrm{T}_{x_0}\backslash \mathrm{GL}_n\cong \mathrm{S}_{x_0}\backslash \mathrm{SL}_n.
\]
By \cite[4.30]{EGM}, the homogeneous space structure is preserved under base change and thus we have
\[
X_{k_v} \cong \mathrm{T}_{x_0 , k_v}\backslash \mathrm{GL}_{n,k_v}\cong \mathrm{S}_{x_0, k_v}\backslash \mathrm{SL}_{n,k_v},
\]
for any place $v \in \Omega_k$.
The structures of $\mathrm{T}_{x_0, k_v}$ and $\mathrm{S}_{x_0,k_v}$ are described in the following lemma.
\begin{lemma}\label{lem:centralizer_str}
We have the following isomorphisms
\[
\left\{\begin{array}{l}
     
\mathrm{T}_{x_0,k_v}\cong \prod\limits_{i\in B_v(\chi)}\mathrm{R}_{K_{v,i}/ k_v}(\mathbb{G}_{m,K_{v,i}}); \\

\mathrm{S}_{x_0,k_v}\cong \ker\Big(\prod\limits_{i\in B_v(\chi)}\Nm_{K_{v,i}/k_v}:\prod\limits_{i\in B_v(\chi)}\mathrm{R}_{K_{v,i}/k_v}(\mathbb{G}_{m,K_{v,i}})\rightarrow \mathbb{G}_{m,k_v}\Big).
\end{array}\right.
\]
\end{lemma}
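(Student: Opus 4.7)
The plan is to deduce the statement directly from Proposition \ref{prop:homogeneous}(2) by base change together with two standard compatibilities of Weil restriction: its behavior under flat base change, and its behavior under finite products of \'etale algebras.

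First I would base change the isomorphism $\mathrm{T}_{x_0} \cong \mathrm{R}_{K/k}(\mathbb{G}_{m,K})$ from Proposition \ref{prop:homogeneous}(2) along $k \to k_v$. Since the Weil restriction along a finite \'etale morphism commutes with flat base change, this yields
\[
\mathrm{T}_{x_0, k_v} \cong \mathrm{R}_{K \otimes_k k_v \,/\, k_v}\bigl(\mathbb{G}_{m, K \otimes_k k_v}\bigr) = \mathrm{R}_{K_v/k_v}\bigl(\mathbb{G}_{m, K_v}\bigr).
\]
Next, because $\chi(x)$ is irreducible over $k$ (hence separable, as $\mathrm{char}\,k=0$), its factorization $\chi(x) = \prod_{i \in B_v(\chi)} \chi_{v,i}(x)$ over $k_v$ is into pairwise coprime irreducible factors, and the Chinese Remainder Theorem gives $K_v \cong \prod_{i \in B_v(\chi)} K_{v,i}$. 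Correspondingly $\Spec K_v$ decomposes as the disjoint union $\coprod_i \Spec K_{v,i}$, and Weil restriction along such a disjoint union factors as a product, so
\[
\mathrm{R}_{K_v/k_v}\bigl(\mathbb{G}_{m,K_v}\bigr) \cong \prod_{i \in B_v(\chi)} \mathrm{R}_{K_{v,i}/k_v}\bigl(\mathbb{G}_{m,K_{v,i}}\bigr).
\]
Composing with the previous identification proves the first isomorphism.

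For the second isomorphism I would base change the short exact sequence
\[
1 \longrightarrow \mathrm{S}_{x_0} \longrightarrow \mathrm{R}_{K/k}(\mathbb{G}_{m,K}) \xrightarrow{\Nm_{K/k}} \mathbb{G}_{m,k}
\]
supplied by Proposition \ref{prop:homogeneous}(2). Base change preserves kernels, so $\mathrm{S}_{x_0, k_v}$ is identified with the kernel of $\Nm_{K_v/k_v}$ acting on $\mathrm{R}_{K_v/k_v}(\mathbb{G}_{m,K_v})$. Under the product decomposition $K_v \cong \prod_i K_{v,i}$, the norm map is multiplicative in the components, i.e. $\Nm_{K_v/k_v}\bigl((\alpha_i)_i\bigr) = \prod_{i \in B_v(\chi)} \Nm_{K_{v,i}/k_v}(\alpha_i)$, which is the usual compatibility of the norm with a finite product of separable algebras. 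Combining this with the first isomorphism identifies $\mathrm{S}_{x_0,k_v}$ with the kernel of the map $\prod_i \Nm_{K_{v,i}/k_v}$, as claimed.

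No step here is a genuine obstacle: each ingredient — compatibility of Weil restriction with flat base change, its factorization along a disjoint union of \'etale covers, and the product-formula for the norm — is standard, and the entire argument is essentially formal once Proposition \ref{prop:homogeneous}(2) is in hand.
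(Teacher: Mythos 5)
Your proof is correct and follows essentially the same route as the paper: both deduce the description of $\mathrm{T}_{x_0,k_v}$ by base-changing the identification $\mathrm{T}_{x_0}\cong\mathrm{R}_{K/k}(\mathbb{G}_{m,K})$ of Proposition \ref{prop:homogeneous}(2) and decomposing along $K_v\cong\prod_i K_{v,i}$, and both identify $\mathrm{S}_{x_0,k_v}$ with the kernel of the norm in the base-changed exact sequence using the multiplicativity of the norm over a product of \'etale algebras. The only difference is one of emphasis: the paper compresses the Weil-restriction compatibilities into the citation \cite[3.12]{Vosk} and instead spells out the norm multiplicativity by a direct determinant computation on $A$-points, whereas you unpack the Weil-restriction facts and take the norm compatibility as standard.
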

\begin{proof}
The describtion for $\mathrm{T}_{x_0,k_v}$ follows from \cite[3.12]{Vosk}.
For the second description, we consider the following exact sequence of algebraic groups
\begin{equation}\label{eq:ST}
    1 \to \mathrm{S}_{x_0, k_v} \to \mathrm{T}_{x_0, k_v} \to \bG_{m, k_v} \to 1.
\end{equation}
We claim that the norm map for an \'etale algebra $\prod_{i\in B_v(\chi)} K_{v,i}$ over $k_v$ is the product of norms for $K_{v,i}$ for $i \in B_v(\chi)$. 

We verify the statements for $\mathrm{S}_{x_0,k_v}$ on $A$-points for each $k_v$-algebra $A$.
We fix an element $x \in A$ and define $\phi : A \otimes_k K \to A \otimes_k K$ to be the left-multiplication by $x \otimes 1$.
Under the following composition of isomorphisms 
\[A \otimes_k K = (A \otimes_{k_v} k_v) \otimes_k K \cong A \otimes_{k_v} (k_v \otimes_k K) \cong A \otimes_{k_v} \prod_{i\in B_v(\chi)} K_{v,i} \cong \prod_{i\in B_v(\chi)} A \otimes_{k_v} K_{v,i},\]
the transfer of $\phi$ from $\prod_{i\in B_v(\chi)} A \otimes_{k_v} K_{v,i}$ to itself is also the left-multiplication by $x\otimes 1$.
We then have
\[\Nm_{A \otimes_k K/ A}(x) = \det \phi = \prod_{i\in B_v(\chi)} \det \phi_i = \prod_{i\in B_v(\chi)} \Nm_{A \otimes_{k_v} K_{v,i}/A}(x),\]
where $\phi_i: A \otimes_{k_v} K_{v,i} \to A \otimes_{k_v} K_{v,i}$ is the left-multiplication on $ A \otimes_{k_v} K_{v,i}$ by $x \otimes 1$. 
This proves the claim and concludes the lemma.
\end{proof}
We now describe the orbits of $X(k_v)$ under the $\mathrm{GL}_n(k_v)$-action and the $\mathrm{SL}_n(k_v)$-action, respectively.
\begin{proposition}\label{prop:gln_orbit}
    There is only one $\mathrm{GL}_n(k_v)$-orbit in $X(k_v)$ and thus $X(k_v)\cong \mathrm{T}_{x_0}(k_v)\backslash \mathrm{GL}_n(k_v)$.
\end{proposition}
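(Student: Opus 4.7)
The plan is to prove transitivity of the $\mathrm{GL}_n(k_v)$-action on $X(k_v)$ directly via rational canonical forms; the claimed bijection $X(k_v) \cong \mathrm{T}_{x_0}(k_v) \backslash \mathrm{GL}_n(k_v)$ then follows from the orbit-stabilizer principle, since $\mathrm{T}_{x_0}(k_v)$ is by definition the $\mathrm{GL}_n(k_v)$-stabilizer of $x_0$.

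First I would record that $\chi(x) \in \mathcal{O}_k[x]$, being irreducible over $k$ in characteristic zero, is separable, so its $n$ roots in $\bar{k}_v$ are pairwise distinct. For any $x \in X(k_v)$ the characteristic polynomial is $\chi(x)$, and hence $x$ has $n$ distinct eigenvalues in $\bar{k}_v$. Since the minimal polynomial of $x$ over $k_v$ must vanish at each eigenvalue, it admits at least $n$ distinct roots in $\bar{k}_v$; together with the fact that it divides $\chi(x)$, which has degree $n$, this forces the minimal polynomial of $x$ over $k_v$ to equal $\chi(x)$.

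Next I would invoke the classical fact that two matrices over a field are conjugate if and only if they share the same rational canonical form, which is determined by their invariant factors. Since every $x \in X(k_v)$ has minimal polynomial equal to its characteristic polynomial $\chi(x)$, the matrix $x$ is non-derogatory with a single invariant factor $\chi(x)$, and is therefore $\mathrm{GL}_n(k_v)$-conjugate to the companion matrix of $\chi(x)$. Equation \eqref{eq:rational_canonical} shows that $x_0$ is also conjugate over $k_v$ to this same companion matrix, whence transitivity. The map $g \mapsto g^{-1} x_0 g$ from $\mathrm{GL}_n(k_v)$ to $X(k_v)$ is then surjective with fibers exactly the right cosets of $\mathrm{T}_{x_0}(k_v)$, giving the asserted bijection.

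The main delicate point is ensuring that the minimal polynomial has full degree $n$ over $k_v$ even though $\chi$ may well fail to remain irreducible there; separability of $\chi$ over $k$ is exactly what supplies this, through distinctness of roots in $\bar{k}_v$. One could alternatively argue cohomologically: the obstruction to transitivity of $\mathrm{GL}_n(k_v)$ on $X(k_v) = (\mathrm{T}_{x_0} \backslash \mathrm{GL}_n)(k_v)$ lies in $\mathrm{H}^1(k_v, \mathrm{T}_{x_0})$, and Proposition \ref{prop:homogeneous}.(2) together with Shapiro's lemma and Hilbert 90 makes this group vanish. I would mention this route but pursue the rational-canonical-form proof, since it stays within elementary linear algebra and does not require the full cohomological machinery.
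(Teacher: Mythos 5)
Your proof is correct, but it takes a genuinely different route from the paper. The paper proceeds cohomologically: it takes the long exact sequence in Galois cohomology attached to $1 \to \mathrm{T}_{x_0,k_v} \to \mathrm{GL}_{n,k_v} \to X_{k_v} \to 1$, which identifies the $\mathrm{GL}_n(k_v)$-orbits in $X(k_v)$ with the fibers of a map to $\mathrm{H}^1(k_v, \mathrm{T}_{x_0,k_v})$, and then kills this cohomology group using the description of $\mathrm{T}_{x_0,k_v}$ from Lemma \ref{lem:centralizer_str} together with Shapiro's lemma and Hilbert's Theorem 90. Your argument instead works entirely within elementary linear algebra: irreducibility of $\chi$ over the characteristic-zero field $k$ gives separability, so every $x \in X(k_v)$ has $n$ distinct eigenvalues and its minimal polynomial over $k_v$ must be all of $\chi$; hence $x$ is non-derogatory and $\mathrm{GL}_n(k_v)$-conjugate to the companion matrix of $\chi$, establishing transitivity directly. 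Both arguments are valid; yours is more self-contained and avoids the cohomological machinery. What the paper's route buys is uniformity with the very next statement, Proposition \ref{prop:number_orbit}, where the analogous exact sequence for $\mathrm{SL}_n$ produces a nontrivial $\mathrm{H}^1(k_v, \mathrm{S}_{x_0,k_v})$ that the paper genuinely needs to parametrize orbits (and, later, to attach endoscopic characters); introducing the cohomological formalism already for $\mathrm{GL}_n$ makes the $\mathrm{SL}_n$ case read as a parallel computation. You correctly flag this cohomological alternative in your last paragraph, and it is in fact exactly the proof the paper gives.
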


\begin{proof}
    We consider the following exact sequence for the $\mathrm{GL}_{n,k_v}$-homogeneous space $X_{k_v}$,
    \[1 \to \mathrm{T}_{x_0,k_v} \to \mathrm{GL}_{n,k_v}\to X_{k_v} \to 1.\]
    This induces the following long exact sequence
    \[\cdots \to \mathrm{GL}_n(k_v) \to X(k_v) \to \mathrm{H}^1(k_v, \mathrm{T}_{x_0,k_v}) \to \cdots .\]
    Using Lemma \ref{lem:centralizer_str}, by \cite[3.12]{Vosk} and Hilbert's Theorem 90, we have
    \[\mathrm{H}^1(k_v, \mathrm{T}_{x_0,k_v}) \cong \prod_{i \in B_v(\chi)} \mathrm{H}^1(k_v, \mathrm{R}_{K_{v,i}/k_v}(\mathbb{G}_{m,K_{v,i}})) \cong \prod_{i \in B_v(\chi)} \mathrm{H}^1(K_{v,i}, \mathbb{G}_{m,K_{v,i}}) = 1,\]
    where $K \otimes_k k_v \cong \prod_{i \in B_v(\chi)} K_{v,i}$.
\end{proof}

\begin{proposition}\label{prop:number_orbit}
    The set of $\mathrm{SL}_n(k_v)$-orbits within $X(k_v)$ is in bijection with 
    $\mathrm{H}^1(k_v,\mathrm{S}_{x_0,k_v})$.
\end{proposition}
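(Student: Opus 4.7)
The plan is to apply the standard torsor/homogeneous space argument in non-abelian Galois cohomology, mirroring the proof of Proposition \ref{prop:gln_orbit} but now with a nontrivial obstruction.

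First, I would invoke the homogeneous space structure from Proposition \ref{prop:homogeneous}, which after base change to $k_v$ produces the short exact sequence of $k_v$-groups
\[ 1 \to \mathrm{S}_{x_0, k_v} \to \mathrm{SL}_{n, k_v} \to X_{k_v} \to 1. \]
Applying Galois cohomology to this sequence yields the pointed-set long exact sequence
\[ 1 \to \mathrm{S}_{x_0}(k_v) \to \mathrm{SL}_n(k_v) \to X(k_v) \xrightarrow{\delta} \mathrm{H}^1(k_v, \mathrm{S}_{x_0, k_v}) \to \mathrm{H}^1(k_v, \mathrm{SL}_{n,k_v}). \]

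The key input is the vanishing $\mathrm{H}^1(k_v, \mathrm{SL}_{n, k_v}) = 1$, which is a classical consequence of Hilbert's Theorem 90 (equivalently, the fact that every central simple $k_v$-algebra of degree $n$ splits after tensoring with a form of $\mathrm{SL}_n$; see e.g.\ Serre, \emph{Galois Cohomology}, III.1). Given this vanishing, the connecting map $\delta$ is surjective.

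To turn surjectivity of $\delta$ into the desired bijection, I would use the standard torsor interpretation: the fibers of $\delta$ are precisely the $\mathrm{SL}_n(k_v)$-orbits in $X(k_v)$. This follows because, for $x, x' \in X(k_v)$, one has $\delta(x) = \delta(x')$ if and only if the class of the $\mathrm{S}_{x_0, k_v}$-torsor obtained by twisting agrees, which in turn is equivalent to the existence of $g \in \mathrm{SL}_n(k_v)$ with $g^{-1}x_0 g$ representing both; explicitly, writing $x = g_x^{-1} x_0 g_x$ and $x' = g_{x'}^{-1} x_0 g_{x'}$ with $g_x, g_{x'} \in \mathrm{SL}_n(\bar{k_v})$, the classes agree precisely when $g_x g_{x'}^{-1}$ can be chosen in $\mathrm{SL}_n(k_v)$. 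Passing to the quotient $\mathrm{SL}_n(k_v) \backslash X(k_v)$ therefore gives the claimed bijection with $\mathrm{H}^1(k_v, \mathrm{S}_{x_0, k_v})$.

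I do not anticipate a genuine obstacle here; the argument is essentially formal once one has the short exact sequence from Proposition \ref{prop:homogeneous} and the vanishing of $\mathrm{H}^1(k_v, \mathrm{SL}_n)$. The only subtlety worth stating carefully is that, since $\mathrm{S}_{x_0, k_v}$ is not central in $\mathrm{SL}_{n, k_v}$ in general, the long exact sequence must be interpreted in the sense of pointed sets (with fibers of $\delta$ given by twisting), rather than as a sequence of abelian groups.
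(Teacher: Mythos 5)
Your proof is correct and follows essentially the same route as the paper: base-change the short exact sequence $1 \to \mathrm{S}_{x_0,k_v} \to \mathrm{SL}_{n,k_v} \to X_{k_v} \to 1$, take the long exact sequence in (pointed-set) Galois cohomology, and use $\mathrm{H}^1(k_v, \mathrm{SL}_{n,k_v}) = 1$ to conclude that the connecting map identifies the orbit set with $\mathrm{H}^1(k_v,\mathrm{S}_{x_0,k_v})$; your remark that the fibers of $\delta$ are the $\mathrm{SL}_n(k_v)$-orbits (and that this is a pointed-set, not abelian-group, statement) is a useful elaboration that the paper leaves implicit. One small caveat: the parenthetical gloss on the vanishing of $\mathrm{H}^1(k_v,\mathrm{SL}_n)$ in terms of central simple algebras is garbled --- the cleanest justification is that $\det : \mathrm{GL}_n(k_v) \to k_v^\times$ is surjective together with Hilbert 90 for $\mathrm{GL}_n$ --- but this does not affect the correctness of the argument.
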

\begin{proof}
    We consider the following exact sequence for the $\mathrm{SL}_{n,k_v}$-homogeneous space $X_{k_v}$,
    \[ 1 \to \mathrm{S}_{x_0,k_v} \to \mathrm{SL}_{n,k_v} \to X_{k_v} \to 1.\]
    This induces the following long exact sequence
    \begin{equation}\label{eq:long_exact_sequence_for_sln}\cdots \to \mathrm{SL}_n(k_v) \to X(k_v) \to \mathrm{H}^1(k_v, \mathrm{S}_{x_0,k_v}) \to \mathrm{H}^1(k_v, \mathrm{SL}_{n,k_v}) \to \cdots.\end{equation}
    Since $\mathrm{H}^1(k_v, \mathrm{SL}_{n,k_v}) = 1$, it follows that the set of $\mathrm{SL}_n(k_v)$-orbits is in bijection with $\mathrm{H}^1(k_v, \mathrm{S}_{x_0,k_v})$.
\end{proof}

\begin{remark}\label{rmk:h1gal}
    If $K_v$ is a field extension of $k_v$, then $\mathrm{H}^1(k_v,\mathrm{S}_{x_0,k_v})$ is isomorphic to $\Gal(K_v/k_v)^{ab}$.
    Indeed, the following exact sequence is induced by (\ref{eq:ST}),
    \[K_v^\times \xrightarrow{\Nm_{K_v/k_v}} k_v^\times \to \mathrm{H}^1(k_v,\mathrm{S}_{x_0,k_v}) \to \mathrm{H}^1(k_v,\mathrm{T}_{x_0,k_v}).\]
    As shown in the proof of Proposition \ref{prop:gln_orbit}, we have $\mathrm{H}^1(k_v,\mathrm{T}_{x_0,k_v}) = 0$. 
    Thus, $\mathrm{H}^1(k_v,\mathrm{S}_{x_0,k_v})$ is isomorphic to $k_v^\times / \Nm_{K_v/k_v} K_v^\times$ and this quotient is isomorphic to $\Gal(K_v/k_v)^{ab}$ by \cite[Section 3, XI]{Ser}.
\end{remark}

\begin{definition}\label{def:slnorbits}
    We denote by $\mathcal{R}_{x_0,v}$ the set of representatives of each $\mathrm{SL}_n(k_v)$-orbits in $X(k_v)$.
    For each $x_0'\in \mathcal{R}_{x_0,v}$, we define $z_{x_0'}$ to be the element in $\mathrm{H}^1(k_v, \mathrm{S}_{x_0,k_v})$ corresponding to the orbit $x_0'\cdot \mathrm{SL}_n(k_v)$ according to Proposition \ref{prop:number_orbit}.
\end{definition}
\begin{remark}\label{rmk:orbitforsln}
    By Proposition \ref{prop:number_orbit}, we can describe $X(k_v)$ as the following disjoint union.
    \[
    X(k_v)= \bigsqcup_{x_0'\in\mathcal{R}_{x_0,v}} x_0'\cdot \mathrm{SL}_n(k_v)  \cong \bigsqcup_{x_0'\in\mathcal{R}_{x_0,v}}\mathrm{S}_{x_0'}(k_v)\backslash \mathrm{SL}_{n}(k_v),
    \]
    where $\mathrm{S}_{x_0'}$ is the centralizer of $x_0'$ under the $\mathrm{SL}_n$-conjugation.
\end{remark}

\begin{remark}
    We remark that each $\mathrm{SL}_n(k_v)$-orbit is open in $X(k_v)$.
    For each $x \in X(k_v)$, we define the equivariant map over $k_v$, $\varphi_{\mathrm{SL}_n,x} : \mathrm{SL}_n \to X$ by $g \to g^{-1}xg$.
    By the same argument in Lemma \ref{lem:centralizer_str},
    one can deduce that $\mathrm{S}_{x,k_v}\cong\ker (\prod_{i\in B_v(\chi)}\Nm_{K_{v,i}/k_v})$ and it is smooth over $k_v$.
    Thus, $\varphi_{\mathrm{SL}_n,x}$ is also smooth over $k_v$ by \cite[Corollary 4.33]{EGM} and $\varphi_{\mathrm{SL}_n,x}(k_v)$ is an open map by \cite[Proposition 3.5.73]{Poonen}.
    Therefore, the image of $\mathrm{SL}_n(k_v)$ via $\varphi_{\mathrm{SL}_n,x}(k_v)$, that is the $\mathrm{SL}_n(k_v)$-orbit of $x$, is open in $X(k_v)$.
\end{remark}

\subsection{Measures on \texorpdfstring{$X$}{X}}\label{sec:Tammeasure}
We note that $\mathrm{GL}_n$ and $\mathrm{T}_{x_0}$ are unimodular since they are connected reductive $k$-groups.
By \cite[Corollary of Theorem 2.2.2]{Wei82}, $\mathrm{GL}_n$ and $\mathrm{T}_{x_0}$ admit translation-invariant gauge forms, denoted by $\omega_{\mathrm{GL}_n}$ and $\omega_{\mathrm{T}_{x_0}}$, respectively.
Moreover, $X$ admits a $\mathrm{GL}_n$-invariant gauge form $\omega_X$ by \cite[Section 1.4]{BR}.
According to the arguments in \cite[p. 24]{Wei82}, by multiplying a constant in $k^\times$ on $\omega_X$, we may and do assume that $\omega_{\mathrm{GL}_n}$, $\omega_{\mathrm{T}_{x_0}}$, and $\omega_X$ match together algebraically in the sense of \cite[Section 2.4, p. 24]{Wei82} (see \cite[Definition 2.4]{Lee25}).
\begin{definition}\label{def:tamagawameasure}
We define the Tamagawa measure $m^X$ on $X(\mathbb{A}_k)$ with respect to the $\mathrm{GL}_n$-invariant gauge form $\omega_X$ as follows
\[m^X :=  \abs{\Delta_k}^{-\frac{1}{2} \dim X} \prod_{v \in \Omega_k} \abs{\omega_X}_v.\]
Here, the definition of the Tamagawa measure $m^X$ is indenpendent of the choice of $\omega_X$ by \cite[Lemma 6.2]{Lee25}.
\end{definition}

To make use of (\ref{eq:main_idea_intro}), we need to describe how the measure $m^X$ is computed.
For this purpose, we adopt the measure $d\mu_v$ on $X_{k_v}$ as defined in \cite[(6.3)]{Lee25}, which is more suitable for explicit computation.
\begin{equation}\label{eq:quotient_measure}
d\mu_v:=\left\{\begin{array}{l l}
     |\omega_{X_{k_v}}^{can}|_v& v\in\infty_k\\
     dg_v/dt_v&  v\in \Omega_k\setminus \infty_k.
\end{array}
\right.
\end{equation}
\begin{itemize}
    \item For $v\in\infty_k$, $\omega_{X_{k_v}}^{can}$ is a $\mathrm{GL}_{n,k_v}$-invariant form  on $X_{k_v}$ such that $\omega_{\mathrm{GL}_{n,k_v}}^{can}=\omega_{X_{k_v}}^{can}\cdot\omega_{\mathrm{T}_{x_0,k_v}}^{can}$ where $\omega_{\mathrm{GL}_{n,k_v}}^{can}$ (resp. $\omega_{\mathrm{T}_{x_0,k_v}}^{can}$) is the invariant forms on $\mathrm{GL}_{n,k_v}$ (resp. $\mathrm{T}_{x_0,k_v}$) defined in \cite[Section 9]{GG99}.
\item For $v\in\Omega_k\setminus\infty_k$, $dg_v/dt_v$ is the quotient measure on $X(k_v)\cong \mathrm{T}_{x_0}(k_v)\backslash \mathrm{GL}_n(k_v)$ where $dg_v$ (resp. $dt_v$) is the measure on $\mathrm{GL}_n(k_v)$ (resp. $\mathrm{T}_{x_0}(k_v)$) such that $vol(dg_v, \mathrm{GL}_n(\mathcal{O}_{k_v}))=1$ (resp. $vol(dt_v,T_c)=1$ for the maximal compact open subgroup $T_c$ of $\mathrm{T}_{x_0}(k_v)$).
\end{itemize}
Then the relation between $m^X$ and $\prod_{v\in \Omega_k}d\mu_v$ is given as follows
\begin{lemma}[{\cite[Lemma 6.3]{Lee25}}]\label{lem:measure_comparison_m_mu}
We have
    \[
C_{meas}:=
\frac{m^X}{\prod_{v\in \Omega_k}d\mu_v}=2^{(n-1)[k:\Q]}|\Delta_k|^{\frac{-n^2+n}{2}}  \frac{R_K h_K \sqrt{\abs{\Delta_K}}^{-1}}{R_k h_k \sqrt{\abs{\Delta_k}}^{-1}} \left(\prod_{i=2}^n \zeta_k(i)^{-1}\right)\left(\prod_{v\in\Omega_k\setminus\infty_k}\frac{|\Delta_\chi|_v^{-\frac{1}{2}}}{q_v^{S_v(\chi)}}\right),
    \]where $R_F$ is the regulator of $F$, $h_F$ is the class number of $\mathcal{O}_F$, $\zeta_k$ is the Dedekind zeta function of $k$, and $S_v(\chi)$ is the $\mathcal{O}_{k_v}$-module length between $\mathcal{O}_{K_v}$ and $\mathcal{O}_{k_v}[x]/(\chi(x))$.
\end{lemma}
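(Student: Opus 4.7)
The plan is to factor $C_{meas}$ through the quotient structure $X \cong \mathrm{T}_{x_0}\backslash \mathrm{GL}_n$, reducing the problem to Tamagawa-number comparisons on $\mathrm{GL}_n$ and on the torus $\mathrm{T}_{x_0}\cong \mathrm{R}_{K/k}\mathbb{G}_{m,K}$, together with purely local corrections at each finite place.

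First, I would exploit the algebraic matching $\omega_{\mathrm{GL}_n} = \omega_X \cdot \omega_{\mathrm{T}_{x_0}}$. At every place $v$ this yields the quotient-measure identity $|\omega_X|_v = |\omega_{\mathrm{GL}_n}|_v/|\omega_{\mathrm{T}_{x_0}}|_v$ on $X(k_v) \cong \mathrm{T}_{x_0}(k_v)\backslash \mathrm{GL}_n(k_v)$. Since $\dim \mathrm{GL}_n = \dim \mathrm{T}_{x_0} + \dim X$, the $|\Delta_k|$-normalization in Definition \ref{def:tamagawameasure} factors correspondingly, so $m^X = m^{\mathrm{GL}_n}/m^{\mathrm{T}_{x_0}}$ as quotient measures. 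By construction of $d\mu_v$, the identity $d\mu_v = dg_v/dt_v$ holds at each finite place, so the ratio $C_{meas}$ splits globally into three pieces: the comparison $m^{\mathrm{GL}_n}/\prod_v dg_v$, the inverse of $m^{\mathrm{T}_{x_0}}/\prod_v dt_v$, and an Archimedean correction coming from the canonical form $\omega_{X_{k_v}}^{can}$ of \cite[Section 9]{GG99}.

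Next I would apply the classical Tamagawa-volume identities for the two groups. For $\mathrm{GL}_n$, the finite-place local volumes $\prod_{i=1}^n (1-q_v^{-i})^{-1}$ assemble globally\,---\,together with the $\zeta_k(1)$-residue factor $R_k h_k / \sqrt{|\Delta_k|}$ contributed by the $\mathbb{G}_m$-part of $\mathrm{GL}_n$\,---\,into $\prod_{i=2}^n \zeta_k(i)$ and the appropriate power of $|\Delta_k|$. For the torus $\mathrm{R}_{K/k}\mathbb{G}_{m,K}$, the compatibility of Weil restriction with Tamagawa measure reduces the computation to $\mathbb{G}_{m,K}$, contributing $R_K h_K / \sqrt{|\Delta_K|}$. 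Taking the ratio inverts the $\zeta_k(i)$-values, and combining with the Archimedean factor $2^{(n-1)[k:\mathbb{Q}]}$\,---\,obtained from the explicit comparison between $\omega_X$ and $\omega_{X_{k_v}}^{can}$\,---\,yields the global prefactor in the statement.

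The principal obstacle is matching the local corrections $|\Delta_\chi|_v^{-1/2}/q_v^{S_v(\chi)}$ at each finite place $v$. The denominator $q_v^{S_v(\chi)}$ records that $\mathrm{T}_{x_0}(\mathcal{O}_{k_v}) = (\mathcal{O}_{k_v}[x_0])^\times$ sits with index exactly $q_v^{S_v(\chi)}$ inside the maximal compact subgroup $T_c = \mathcal{O}_{K_v}^\times$ against which $dt_v$ is normalized, so passing from the natural integral-orbit measure for $|\omega_{\mathrm{T}_{x_0}}|_v$ to $dt_v$ introduces this index. The numerator $|\Delta_\chi|_v^{-1/2}$ arises from the Jacobian of the embedding of $X_{k_v}$ as the fiber in $\mathfrak{gl}_n(k_v)$ over the coefficient tuple defining $\chi(x)$; the resultant governing this Jacobian equals the discriminant $\Delta_\chi$ of $\chi$. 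The delicate part is ensuring that these two local corrections are combined consistently at places where $\chi(x)$ factors nontrivially over $k_v$, so that $K_v$ decomposes as $\prod_{i\in B_v(\chi)} K_{v,i}$ and the index $S_v(\chi)$ breaks as a sum of contributions from the components in a way compatible with the torus decomposition of Lemma \ref{lem:centralizer_str}.
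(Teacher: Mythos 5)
Your overall plan is the right one: split $m^X$ through the quotient $X\cong\mathrm{T}_{x_0}\backslash\mathrm{GL}_n$ via the algebraic matching $\omega_{\mathrm{GL}_n}=\omega_X\cdot\omega_{\mathrm{T}_{x_0}}$, feed in the Tamagawa-volume identities for $\mathrm{GL}_n$ and for $\mathrm{T}_{x_0}\cong\mathrm{R}_{K/k}\mathbb{G}_{m,K}$ (which is where $R_Kh_K/\sqrt{|\Delta_K|}$, $R_kh_k/\sqrt{|\Delta_k|}$, and $\prod_{i\ge2}\zeta_k(i)^{-1}$ come from), and then account for the discrepancy between the gauge-form measures and the lattice-normalized measures $dg_v$, $dt_v$, $|\omega^{can}_{X_{k_v}}|_v$ place by place. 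That is the structure of the proof in \cite[Lemma 6.3]{Lee25}.

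However, there is a genuine error in your identification of the local correction $q_v^{S_v(\chi)}$. You claim that $[\mathcal{O}_{K_v}^\times:(\mathcal{O}_{k_v}[x_0])^\times]=q_v^{S_v(\chi)}$, i.e.\ that the unit index equals the module index. This fails in general. Take $K_v=k_v\times k_v$ and $\mathcal{O}=\mathcal{O}_{k_v}[x_0]=\{(a,b)\in\mathcal{O}_{k_v}^2: a\equiv b\ (\mathrm{mod}\ \pi_v^s)\}$, so $S_v(\chi)=s$. The map $\mathcal{O}_{k_v}^\times\times\mathcal{O}_{k_v}^\times\to(\mathcal{O}_{k_v}/\pi_v^s)^\times$, $(a,b)\mapsto ab^{-1}$, is surjective with kernel $\mathcal{O}^\times$, giving $[\mathcal{O}_{K_v}^\times:\mathcal{O}^\times]=q_v^{s-1}(q_v-1)\neq q_v^s$. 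The additive index $[\mathcal{O}_{K_v}:\mathcal{O}_{k_v}[x_0]]$ does equal $q_v^{S_v(\chi)}$, and that is what should drive this factor --- the comparison of invariant differentials is really a comparison of Lie-algebra lattices (or, equivalently, of the leading coefficients of the local Igusa-type volume series), not of unit subgroups. Your present argument therefore does not produce the denominator in the lemma. A related looseness: you try to assign $|\Delta_\chi|_v^{-1/2}$ and $q_v^{-S_v(\chi)}$ to two independent sources (a Jacobian and a unit index), but in fact these should be tracked together --- by the conductor--discriminant relation $\ord_v(\Delta_\chi)=2S_v(\chi)+\ord_v(\Delta_{K_v/k_v})$ (which the paper itself invokes later), their product is $|\Delta_{K_v/k_v}|_v^{-1/2}$, and the clean bookkeeping compares the gauge-form volume of $\mathcal{O}_{K_v}^\times$ directly to its discriminant, then separately extracts the additive lattice index. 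Finally, the Archimedean factor $2^{(n-1)[k:\Q]}$ is asserted rather than derived; it requires the explicit comparison of $\omega_{X_{k_v}}^{can}$ from \cite[Section~9]{GG99} with $\omega_X$ at each real place, which is a nontrivial computation and should not be waved away.
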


\section{Brauer group and evaluation}\label{sec:Brauer}

In this section, we introduce the definition of the Brauer group and the Brauer evaluation, used in Proposition \ref{prop:main_idea_intro}.
We mainly follow \cite[Section 3]{Lee25}.
\begin{definition}[{\cite[Definition 3.1]{Lee25}}]\label{def:brauer}
    For any scheme $V$ over $k$, the Brauer group is defined by
    \[\Br V := \mathrm{H}_{\acute{e}t}^2(V, \mathbb{G}_{m}),\]
    where $\mathbb{G}_{m}$ stands for the \'etale sheaf associated with the multiplicative group over $V$ (see \cite[Proposition 6.3.19]{Poonen}).
    For an affine scheme $V=\Spec A$, we denote $\Br (\Spec A)$ by $\Br A$ to ease the notation.
\end{definition}
\begin{definition}
\label{def:brauer_evaluation}
Let $V$ be a $k$-scheme and $A$ be a $k$-algebra. For the contravariant functor $\mathrm{Br}(-):= \mathrm{H}_{\acute{e}t}^2(-, \mathbb{G}_{m})$ from $\mathrm{Sch}_k$ to $\mathrm{Ab}$, we define the Brauer evaluation as follows. 
    \begin{enumerate}
        \item For any $x \in V(A)$ and $\xi \in \Br V$, we define the Brauer evaluation $\xi(x)$ to be the image of $\xi$ under the morphism $\mathrm{Br}(x):\Br V\rightarrow \Br A$.

        \item For a local field $k_v$ where $v\in \Omega_k$, let $\inv_v : \Br k_v \to \Q/\Z$ be the injection in \cite[Theorem 1.5.34]{Poonen}.
        In this case,
        for any $x \in V(k_v)$ and $\xi \in \Br V$, we define the Brauer evaluation $\xi_v(x)$ to be the image of $\xi$ under the following composite map
        \[\Br V \xrightarrow{\mathrm{Br} (x)} \Br k_v \xhookrightarrow{\inv_v} \Q / \Z\xrightarrow{x\mapsto \exp(2\pi ix)}\mathbb{C}^\times. 
        \]
    \end{enumerate}
\end{definition}

We now return to the case $X \cong \mathrm{T}_{x_0}\backslash \mathrm{GL}_n$.
For each place $v \in \Omega_k$, we shall compute the local integrals of the Brauer evaluation $\xi_v$ over $X(k_v)$ where $\xi \in\Br X$.
For this purpose, we normalize $\xi_v$ so that its value at $x_0$ is trivial.
\begin{definition}\label{def:normalizedeval}
    For $\xi \in \Br X$, we define the normalized Brauer evaluation $\tilde{\xi}_v$ on $x\in X(k_v)$ to be
    \[\tilde{\xi}_v(x) := \frac{\xi_v(x)}{\xi_v(x_0)},\]
    where $v\in \Omega_k$ and $x_0$ is the point in $X(\mathcal{O}_k)$ chosen in Section \ref{subsec:homogeneous}.
\end{definition}

\begin{remark}\label{rem:normalized_eval}
    The structure morphism induces a map $\Br k \to \Br X$ via the contravariant functor $\Br(-)$.
    We denote the cokernel of this map by $\Br X /\Br k$. 
    \begin{enumerate}
        \item If $\xi$ lies in the trivial coset of $\Br X/\Br k$, then the evaluation $\xi_v$ is constant on $X(k_v)$ for all $v\in \Omega_k$ and $\prod_{v\in\Omega_k}\xi_v\equiv 1$ (see {\cite[Remark 3.3]{Lee25}}).
        By definition, one has $\tilde{\xi}_v\equiv 1$ for all $v\in\Omega_k$.
        \item The construction of $\mathrm{inv}_v$ in \cite[Theorem 1.5.34]{Poonen} implies that the injection $\inv_v:\Br k_v\rightarrow \mathbb{Q}/\mathbb{Z}$ is a group homomorphism (see \cite[p. 130]{Ser67}).
        Using this fact together with (1), we have that if $\xi$ and $\xi'$ lie in the same coset of $\Br X / \Br k$, then their normalized evaluations $\tilde{\xi}_v$ and $\tilde{\xi}_v'$ coincide.
    \end{enumerate}
\end{remark}

By Remark \ref{rem:normalized_eval}, one can check that the summand of the right hand side of (\ref{eq:main_idea_intro}) - which is the product of the integrals of $\xi_v$ over $v\in \Omega_k$ - is well-defined for each $\xi \in \Br X/\Br k$.\footnote{We note, however, that the integral of the evaluation $\xi_v$ is not well-defined for $\xi \in\Br X/ \Br k$.
In contrast, 
the normalization in Definition \ref{def:normalizedeval} not only fixes the value at $x_0$ to be $1$, but also ensures that the local integrals are well-defined.}
Since $x_0$ is in $X(k)$, we have $\prod_{v\in\Omega_k}\xi_v(x_0)=1$ for $\xi \in \Br X/\Br k$ by \cite[Proposition 8.2.2]{Poonen} and hence 
\begin{equation}\label{eq:normalized_evaluation}\begin{aligned}
    \prod_{v \in \Omega_k \setminus\infty_k} \int_{X(\mathcal{O}_{k_v})} \tilde{\xi}_v(x) &\ \abs{\omega_X}_v \prod_{v \in \infty_k} \int_{X(k_v, T)} \tilde{\xi}_v(x) \ \abs{\omega_X}_v \\
    &= \prod_{v \in \Omega_k \setminus\infty_k} \int_{X(\mathcal{O}_{k_v})} \xi_v(x) \ \abs{\omega_X}_v \prod_{v \in \infty_k} \int_{X(k_v, T)} \xi_v(x) \ \abs{\omega_X}_v.
\end{aligned}\end{equation}

\section{Brauer Evaluation on \texorpdfstring{$X$}{X}}
\label{sec:5}
From now on, throughout Sections \ref{sec:5}-\ref{sec:integral_computation}, we work under the assumptions:
\begin{equation}\label{eq:condition45}
\left\{
\begin{array}{l}
     \textit{(1) $k$ and $K$ are totally real number fields};  \\
     \textit{(2) $\chi(x)$ is of prime degree $n$}.
\end{array}
\right.
\end{equation}

Our strategy to obatain the asymptotic formula for $N(X,T)$ is based on  Proposition \ref{prop:main_idea_intro}.
In this section, we analyze the Brauer evaluation $\xi_v$ using the local class field theory, while Section \ref{sec:integral_computation} is devoted to computing the local integral of $\xi_v$ at each place $v\in\Omega_k$.
Our discussion splits into two cases, depending on whether $K/k$ is Galois.

\subsection{The case that $K/k$ is not a Galois extension}

\begin{proposition}\label{prop:not_galois}
    If $K/k$ is not Galois, then $\Br X / \Br k$ is trivial.
\end{proposition}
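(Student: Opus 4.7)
The strategy is to identify $\Br X/\Br k$ with an explicit Galois cohomology group via the homogeneous-space presentation of $X$, and then use the primality of $n=[K:k]$ to force this group to vanish when $K/k$ is not Galois. I would begin by applying a Sansuc-type exact sequence for Picard and Brauer groups of the homogeneous space $X\cong \mathrm{T}_{x_0}\backslash\mathrm{GL}_n$ from Proposition~\ref{prop:homogeneous}. Using $\Pic(\mathrm{GL}_{n,\bar k})=0$, $\Br(\mathrm{GL}_{n,\bar k})=0$, and $\bar k[X]^\times=\bar k^\times$ (which follows from $X_{\bar k}$ being a single regular semisimple conjugacy class in $\mathrm{GL}_{n,\bar k}$), one obtains the isomorphism $\Br X/\Br k\cong \mathrm{H}^1(k,\Pic(X_{\bar k}))$ together with a short exact sequence of Galois modules
\[0\to \widehat{\mathrm{GL}}_n\to \widehat{\mathrm{T}}_{x_0}\to \Pic(X_{\bar k})\to 0.\]
Here $\widehat{\mathrm{GL}}_n=\mathbb{Z}$ has trivial action, $\widehat{\mathrm{T}}_{x_0}\cong\mathbb{Z}[\Hom_k(K,\bar k)]$ via the identification $\mathrm{T}_{x_0}\cong \mathrm{R}_{K/k}(\mathbb{G}_{m,K})$, and the map is dual to $\det=\Nm_{K/k}$, sending $1\mapsto \sum_{\sigma} e_\sigma$.

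Next, let $L$ be the Galois closure of $K/k$, $\Gamma=\Gal(L/k)$, and $H=\Gal(L/K)$, so that $\Hom_k(K,\bar k)\cong\Gamma/H$ as $\Gamma$-sets and the Galois action on $\Pic(X_{\bar k})$ factors through $\Gamma$. Taking $\Gamma$-cohomology of the short exact sequence above, together with Shapiro's lemma $\mathrm{H}^i(\Gamma,\mathbb{Z}[\Gamma/H])\cong\mathrm{H}^i(H,\mathbb{Z})$ (so that the relevant $\mathrm{H}^1$ vanishes and the relevant $\mathrm{H}^2$ becomes $\Hom(H,\mathbb{Q}/\mathbb{Z})$) and the identification $\mathrm{H}^2(\Gamma,\mathbb{Z})\cong\Hom(\Gamma,\mathbb{Q}/\mathbb{Z})$, produces
\[\Br X/\Br k\;\cong\;\ker\bigl(\Hom(\Gamma,\mathbb{Q}/\mathbb{Z})\xrightarrow{\res}\Hom(H,\mathbb{Q}/\mathbb{Z})\bigr).\]
Since a character of $\Gamma$ taking values in the abelian group $\mathbb{Q}/\mathbb{Z}$ and vanishing on $H$ automatically vanishes on the normal closure $N\trianglelefteq \Gamma$ of $H$, this kernel is isomorphic to $\Hom(\Gamma/N,\mathbb{Q}/\mathbb{Z})$, and it suffices to prove $N=\Gamma$.

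Finally, I would deduce $N=\Gamma$ using primality. Since $H\subseteq N$, the index $[\Gamma:N]$ divides $[\Gamma:H]=n$, and primality of $n$ forces $[\Gamma:N]\in\{1,n\}$. The case $[\Gamma:N]=n$ would give $N=H$, making $H$ normal in $\Gamma$ and hence $K/k$ Galois, which contradicts the hypothesis. Therefore $[\Gamma:N]=1$, so the kernel vanishes and $\Br X/\Br k=0$. The main technical obstacle I anticipate is justifying the hypotheses of the Sansuc-type sequence used in the first step—most notably the triviality of the geometric units $\bar k[X]^\times=\bar k^\times$ and of $\Br(X_{\bar k})$—both of which should follow from the concrete description $X_{\bar k}\cong\mathrm{GL}_{n,\bar k}/T$ for a maximal torus $T$ arising from the regular semisimple orbit of $x_0$, combined with the standard vanishing of the Brauer group of the flag variety.
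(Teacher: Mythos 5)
Your proof is correct and arrives at the same formula as the paper, namely $\Br X/\Br k\cong\ker\bigl(\Hom(\Gal(L/k),\Q/\Z)\to\Hom(\Gal(L/K),\Q/\Z)\bigr)$, followed by a primality argument. The paper reaches this identification by citing the proof of \cite[Theorem 6.1]{WX} together with \cite[Proposition 2.10]{CtX}, and then invokes \cite[Lemma 6.4]{Lee25} for the vanishing; you instead rederive the identification via a Sansuc-type exact sequence for the $\mathrm{GL}_n$-homogeneous space and close the argument with a clean self-contained observation: a $\Q/\Z$-valued character of $\Gamma$ vanishing on $H$ vanishes on the normal closure $N\trianglelefteq\Gamma$ of $H$, so the kernel is $\Hom(\Gamma/N,\Q/\Z)$; since $[\Gamma:N]$ divides the prime $n=[\Gamma:H]$ and $N=H$ would force $K/k$ Galois, one must have $N=\Gamma$. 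Your route buys self-containedness and a transparent reason for the vanishing, at the cost of needing to justify the inputs to the Sansuc sequence, chiefly $\bar k[X]^\times=\bar k^\times$ and $\Br(X_{\bar k})=0$ for $X_{\bar k}\cong\mathrm{GL}_{n,\bar k}/T$; these do hold (the quotient fibers in affine spaces over the flag variety), but you correctly flag them as the point requiring care rather than something to wave at. The paper outsources exactly this step to the cited references.
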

\begin{proof}
    By the proof of \cite[Theorem 6.1]{WX} together with \cite[Proposition 2.10]{CtX}, we have
    \begin{equation}\label{eq:longexseqofBr}\Br X / \Br k \cong \Pic \mathrm{S}_{x_0} \cong\ker (\Hom(\mathrm{Gal}(L/k), \Q/\Z) \to \Hom(\mathrm{Gal}(L/K), \Q/\Z)),\end{equation}
    where $L$ denotes the Galois closure of $K/k$.
    For $\xi \in \Br X/\Br k$, let $\phi \in  \mathrm{Hom}(\mathrm{Gal}(L/k),\Q/\Z)$ be the image of $\xi$ under the composite of isomorphisms (\ref{eq:longexseqofBr}).
    As in the proof of \cite[Lemma 6.4]{Lee25}, we then have $\ker \phi=\mathrm{Gal}(L/k)$.
    It follows that $\Br X/\Br k$ is trivial.
\end{proof}

\subsection{The case that $K/k$ is a Galois extension}
\begin{proposition}\label{prop:isomorphism_Br_hom}
    If $K/k$ is Galois, then there is an isomorphism
    \begin{equation}\label{eq:Psi}
    \Psi: \Br X / \Br k  
    \xrightarrow{\cong} \Hom(\Lambda , \Q/\Z),
    \end{equation}
    where $\Lambda$ denotes $\mathrm{Gal}(K/k)$.
\end{proposition}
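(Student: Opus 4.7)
The plan is to obtain $\Psi$ as a direct specialization of the identification already invoked in the proof of Proposition~\ref{prop:not_galois}. Combining \cite[Theorem 6.1]{WX} with \cite[Proposition 2.10]{CtX}, one has for any irreducible $\chi(x)$ a natural chain of isomorphisms
\[ \Br X / \Br k \;\cong\; \Pic \mathrm{S}_{x_0} \;\cong\; \ker\bigl(\Hom(\Gal(L/k), \Q/\Z) \xrightarrow{\mathrm{res}} \Hom(\Gal(L/K), \Q/\Z)\bigr), \]
where $L$ is the Galois closure of $K/k$ inside $\bar k$ and the second map is restriction along the inclusion $\Gal(L/K) \hookrightarrow \Gal(L/k)$.

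Under the present hypothesis that $K/k$ is Galois, one has $L = K$, so that $\Gal(L/K)$ is the trivial group. The target $\Hom(\Gal(L/K), \Q/\Z)$ then vanishes, and the kernel on the right-hand side is the whole group $\Hom(\Gal(L/k), \Q/\Z) = \Hom(\Lambda, \Q/\Z)$. Composing with the two preceding isomorphisms then produces the desired $\Psi$. This is in direct contrast to the non-Galois case of Proposition~\ref{prop:not_galois}, where the kernel of the restriction turned out to be trivial by \cite[Lemma 6.4]{Lee25}.

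The derivation is essentially formal once the two ingredients above are in hand, so I do not expect any substantive obstacle at the level of the isomorphism statement itself. The real work, which I would defer to the next subsection, is to make $\Psi$ canonical enough to match the formula $\tilde{\xi}_v(x) = \exp 2\pi i\, \Psi(\xi)(\phi_{K_v/k_v}(\det g_x))$ appearing in Proposition~\ref{prop:evaluation_intro}(3). Pinning this down requires tracing through the identification $\Pic\mathrm{S}_{x_0} \cong \mathrm{H}^1(\Gal(K/k), X^*(\mathrm{S}_{x_0}))$ together with the short exact sequence
\[ 0 \to \Z \to \Z[\Gal(K/k)] \to X^*(\mathrm{S}_{x_0}) \to 0 \]
coming from $\mathrm{S}_{x_0} \cong \mathrm{R}^{(1)}_{K/k}(\mathbb{G}_{m,K})$, whose connecting map into $\mathrm{H}^2(\Gal(K/k),\Z) \cong \Hom(\Lambda,\Q/\Z)$ supplies the canonical form of $\Psi$. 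That orientation bookkeeping, rather than the existence of $\Psi$, is the delicate point.
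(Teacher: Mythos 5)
Your argument is correct as stated: specializing the kernel description of $\Pic\mathrm{S}_{x_0}$ used in Proposition~\ref{prop:not_galois} to the Galois case, where $L=K$ forces $\Gal(L/K)=1$, immediately gives $\Br X/\Br k\cong\Hom(\Lambda,\Q/\Z)$. The paper instead reruns the cohomological computation directly for the Galois case: it applies Shapiro's lemma to the sequence $0\to\Z\to\Z[\Lambda]\to\widehat{\mathrm{S}}_{x_0}\to 0$ to get $H^1(\Lambda,\widehat{\mathrm{S}}_{x_0})\cong H^2(\Lambda,\Z)$ and then the dimension shift from $0\to\Z\to\Q\to\Q/\Z\to 0$ to identify $H^2(\Lambda,\Z)\cong\Hom(\Lambda,\Q/\Z)$, so that $\Psi$ is assembled as $\Br X/\Br k\cong\Pic\mathrm{S}_{x_0}\cong H^1(\Lambda,\widehat{\mathrm{S}}_{x_0})\cong H^2(\Lambda,\Z)\cong\Hom(\Lambda,\Q/\Z)$. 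Your route is shorter for the bare existence of the isomorphism, but the paper's more labored version pays off: the explicit factorization through $H^1(\Lambda_v,\widehat{\mathrm{S}}_{x_0})\xrightarrow{\delta_2}H^2(\Lambda_v,\Z)$ and the connecting map $d$ from $0\to\Z\to\Q\to\Q/\Z\to 0$ is precisely what is fed into the cup-product diagram in the proof of Proposition~\ref{prop:evaluation} to derive the formula $\tilde{\xi}_v(x)=\exp 2\pi i\,\Psi(\xi)(\phi_{K_v/k_v}(\det g_x))$. You flag this correctly as the real work to be done afterwards and even sketch the right short exact sequence $0\to\Z\to\Z[\Gal(K/k)]\to X^*(\mathrm{S}_{x_0})\to 0$, so the deferral is reasonable; just be aware that the paper chooses to bake the canonical description into the statement of $\Psi$ up front rather than treating it as a separate normalization step.
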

\begin{proof}
    By \cite[Proposition 2.10.(ii)]{CtX}, we have an isomorphism
    \[
    \Br X/\Br k\cong \Pic \mathrm{S}_{x_0}.
    \]
    We construct an isomorphism $\Pic \mathrm{S}_{x_0}\cong \mathrm{Hom}(\Lambda,\mathbb{Q}/\mathbb{Z})$ following the proof of \cite[Theorem 6.1]{WX}.
    Consider the short exact sequence of $\Lambda$-modules
    \[
    0\rightarrow \mathbb{Z}\rightarrow\mathbb{Z}[\Lambda]\rightarrow \hat{\mathrm{S}}_{x_0}\rightarrow 0
    \]where $\hat{\mathrm{S}}_{x_0}$ denotes the character group of $\mathrm{S}_{x_0}$.
    Since $\Z[\Lambda] = \Ind_1^{\Lambda} \Z$, we have
    \[\mathrm{H}^i(\Lambda, \Z[\Lambda]) \cong \mathrm{H}^i(1, \Z),\]
    which is trivial for $i=1$ and $ 2$ due to Shapiro's lemma. 
    It follows that $\mathrm{H}^1(\Lambda, \hat{\mathrm{S}}_{x_0})\cong \mathrm{H}^2(\Lambda,\mathbb{Z})$.
    
    From the exact sequence of $\Lambda$-modules
    \[
    0\rightarrow \mathbb{Z}\rightarrow \Q\rightarrow \Q/\Z\rightarrow 0,
    \]in which $\Lambda$ acts trivially on every term, we have $\mathrm{Hom}(\Lambda, \Q/\Z)=\mathrm{H}^1(\Lambda,\Q/\Z)\cong \mathrm{H}^2(\Lambda,\Q)$.
    Finally, by Theorem 2 in \cite[Section 4.3 of Chapter 2]{Vosk}, we have
    \[
    \Pic(\mathrm{S}_{x_0})\cong \mathrm{H}^1(\Lambda, \hat{\mathrm{S}}_{x_0})\cong \mathrm{H}^2(\Lambda,\Z )\cong \mathrm{Hom}(\Lambda,\Q/\Z).
    \]
\end{proof}

We denote $\Gal(K_v/k_v) \subset \Lambda$ by $\Lambda_v$.
    The arguments in the proof of Proposition \ref{prop:isomorphism_Br_hom} are also apply to $X_{k_v}$ with $\Lambda_v$, so that we have \[\Psi_v : \Br X_{k_v} / \Br k_v \xrightarrow{\cong} \Pic \mathrm{S}_{x_0, k_v} \xrightarrow{\cong} \Hom(\Lambda_v, \Q/\Z).\]
    Moreover, this yields the following commutative diagram:
\begin{equation}\label{eq:Psi_local_global}
\begin{tikzcd}
	{\Br X / \Br k} & {\Hom(\Lambda, \Q/\Z)} \\
	{\Br X_{k_v} / \Br k_v} & {\Hom(\Lambda_v, \Q/\Z)},
	\arrow["\Psi", from=1-1, to=1-2]
	\arrow[from=1-1, to=2-1]
	\arrow["res", from=1-2, to=2-2]
	\arrow["{\Psi_v}", from=2-1, to=2-2]
\end{tikzcd}\end{equation}
where $\Br X_{k_v}/\Br k_v$ denotes the cokernel of the map $\Br X_{k_v}\rightarrow \Br k_v$ induced by the structure morphism of $X_{k_v}$.

For $v\in \Omega_k$, we treat the following two cases separately: whether $K_v=K\otimes_kk_v$ is a field or not. 
By the assumption, for $v\in \infty_k$, we note that $K_v$ is not a field.
\begin{proposition}\label{prop:evaluation}
Suppose that $K/k$ is Galois and let $\Psi$ be the isomorphism (\ref{eq:Psi}).
\begin{enumerate}
    \item 
    For $v\in \Omega_k$ such that $K_v$ is a field, the normalized Brauer evaluation $\tilde{\xi}_v$ for $\xi \in \Br X$ is given by
    \[\tilde{\xi}_v(x) =  \exp 2 \pi i\Psi(\xi)( \phi_{K_v/k_v}( \det g_x )) \]
    for $x \in X(k_v)$.
    Here, $\phi_{K_v/k_v} : k_v^\times / \Nm_{K_v/k_v} K_v^\times \to \Gal(K_v/k_v) \subset \Lambda$ is the Artin reciprocity isomorphism (cf. \cite[Chapter XI]{Ser}) and $g_x \in \mathrm{GL}_n(k_v)$ such that $g_x^{-1} x_0 g_x = x$.
    \item
    For $v\in \Omega_k$ such that  $K_v$ is not a field, 
    the $\chi(x)$ splits completely into linear factors over $k_v$ and 
    the normalized Brauer evaluation $\tilde{\xi}_v$ for $\xi\in \Br X$ is given by \[\tilde{\xi}_v\equiv 1.\]
    \end{enumerate}
\end{proposition}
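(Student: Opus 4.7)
The plan is to treat both parts by passing to the $\mathrm{SL}_n$-torsor structure on $X$, computing an explicit coboundary, and invoking local class field theory together with the commutative diagram \eqref{eq:Psi_local_global}.

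For part (2), recall that $n$ is prime and $\Lambda = \Gal(K/k)$ is cyclic of order $n$, so the decomposition subgroup $\Lambda_v \subseteq \Lambda$ is either trivial or all of $\Lambda$. If $K_v$ is not a field then $|\Lambda_v| = [K_v:k_v] < n$, forcing $\Lambda_v = 1$; equivalently, $\chi$ splits into $n$ distinct linear factors over $k_v$. Since $\Hom(\Lambda_v, \Q/\Z) = 0$, the commutative diagram \eqref{eq:Psi_local_global} forces the image of every $\xi \in \Br X$ in $\Br X_{k_v}/\Br k_v$ to be trivial, and then Remark \ref{rem:normalized_eval}(1) applied to $X_{k_v}$ gives $\tilde{\xi}_v \equiv 1$.

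For part (1), I would exploit the coboundary $\partial : X(k_v) \to \mathrm{H}^1(k_v, \mathrm{S}_{x_0,k_v})$ attached to $1 \to \mathrm{S}_{x_0,k_v} \to \mathrm{SL}_{n,k_v} \to X_{k_v} \to 1$; by Remark \ref{rmk:h1gal} together with Artin reciprocity, its target is identified with $\Lambda_v = \Lambda$ via the map $k_v^\times / \Nm_{K_v/k_v} K_v^\times \xrightarrow{\phi_{K_v/k_v}} \Lambda_v$. The first step is to show $\partial(x) \equiv \det g_x \pmod{\Nm_{K_v/k_v} K_v^\times}$; the right-hand side is independent of the choice of $g_x$, because two lifts differ by an element $t \in \mathrm{T}_{x_0}(k_v)$ whose determinant equals $\Nm_{K_v/k_v}(t)$ under the identification $\mathrm{T}_{x_0} \cong \mathrm{R}_{K/k}\mathbb{G}_{m,K}$. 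To verify the claim I would choose $c_x \in \bar{k}_v$ with $c_x^n = \det g_x$ and set $h_x = c_x^{-1} g_x \in \mathrm{SL}_n(\bar{k}_v)$; then $h_x^{-1} x_0 h_x = x$ and the associated Galois cocycle is $\sigma \mapsto (c_x/\sigma(c_x)) \cdot \mathrm{I}_n \in \mathrm{S}_{x_0}(\bar{k}_v)$. On the other hand, the connecting map from $1 \to \mathrm{S}_{x_0} \to \mathrm{T}_{x_0} \xrightarrow{\Nm_{K/k}} \mathbb{G}_{m,k} \to 1$ sends $\det g_x$ to the class of the scalar $c_x \cdot \mathrm{I}_n \in \mathrm{T}_{x_0}(\bar{k}_v)$, which gives the same cocycle and hence the claim.

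The second and central step is to identify the Brauer evaluation with the pairing
\[ (\xi, x) \mapsto \Psi_v(\xi)(\partial(x)) \in \Q/\Z \]
under the identifications above. This requires tracing through the chain $\Br X_{k_v}/\Br k_v \cong \Pic \mathrm{S}_{x_0,k_v} \cong \mathrm{H}^1(\Lambda_v, \hat{\mathrm{S}}_{x_0,k_v}) \cong \mathrm{H}^2(\Lambda_v, \Z) \cong \Hom(\Lambda_v, \Q/\Z)$ from Proposition \ref{prop:isomorphism_Br_hom} and checking that, when paired against the coboundary $\partial$ via cup product, it recovers the pullback defining the Brauer evaluation in Definition \ref{def:brauer_evaluation}. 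I expect this compatibility to be the main obstacle of the proof, as it involves matching a functorial definition with a chain of connecting maps in Galois cohomology; the remainder is a direct cocycle computation plus local class field theory. Once this compatibility is in place, the diagram \eqref{eq:Psi_local_global} replaces $\Psi_v(\xi)$ with the restriction of $\Psi(\xi)$ to $\Lambda_v$, the normalization at $x_0$ (where $\det g_{x_0} = 1$ for $g_{x_0} = \mathrm{I}_n$) is automatic, and substituting $\partial(x) = \phi_{K_v/k_v}(\det g_x)$ yields the stated formula $\tilde{\xi}_v(x) = \exp 2\pi i \, \Psi(\xi)(\phi_{K_v/k_v}(\det g_x))$.
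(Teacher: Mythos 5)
Your overall plan for part (1) — pass through the $\mathrm{SL}_n$-torsor, compute the connecting map $\partial$, identify $\mathrm{H}^1(k_v,\mathrm{S}_{x_0,k_v})$ with $\Gal(K_v/k_v)$ via local class field theory, and relate the Brauer evaluation to a cohomological pairing — is exactly the route the paper takes. Your cocycle computation showing $\partial(x)$ corresponds to $\det g_x$ mod norms is correct and in fact supplies a proof of something the paper simply cites from \cite{WX}. Your argument for part (2) is also fine; it reaches the same conclusion as the paper by a slightly more formal route (vanishing of $\Hom(\Lambda_v,\Q/\Z)$ for $\Lambda_v=1$, then functoriality of $\Psi_v$), whereas the paper argues directly that $\mathrm{S}_{x_0,k_v}$ is a split torus so $\mathrm{H}^1(k_v,\widehat{\mathrm{S}}_{x_0})=0$.

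However, the step you flag as ``the main obstacle'' in part (1) is a genuine gap, not merely a detail to be tidied up: identifying the Brauer evaluation of Definition~\ref{def:brauer_evaluation} with the pairing $\Psi_v(\xi)(\partial(x))$ is where essentially all of the work in the paper's proof lies, and you offer no argument for it. Concretely, the paper handles this in three layers. First, it invokes \cite[Propositions 2.9--2.10]{CtX} to express the evaluation of $\Br_* X$ at $x\in X(k_v)$ as the cup product of $\partial(x)\in\mathrm{H}^1(k_v,\mathrm{S}_{x_0})$ with the image of $\xi$ in $\mathrm{H}^1(k_v,\widehat{\mathrm{S}}_{x_0})$ (the diagram~\eqref{eq:diag_cup_prod}). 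Second, it proves the commutative square~\eqref{eq:HHSK} relating this $\mathrm{H}^1\times\mathrm{H}^1\to\mathrm{H}^2$ cup product to the $\mathrm{H}^0\times\mathrm{H}^2\to\mathrm{H}^2$ cup product via the connecting maps $\delta_1,\delta_2$; this requires writing down explicit splittings of the two short exact sequences in~\eqref{eq:exact_SZ}, tensoring to obtain~\eqref{eq:diagram_proof}, and invoking \cite[Proposition 5, VIII]{Ser} twice. Third, it applies \cite[Proposition 2, XI]{Ser} to recognize the resulting pairing $\mathrm{H}^0(\Lambda_v,K_v^\times)\times\mathrm{H}^2(\Lambda_v,\Z)\to\Br k_v$ as the Artin reciprocity map. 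Without this chain, your proposal asserts the answer rather than derives it; to complete the proof you need to supply (or locate and cite) the compatibility between the sheaf-theoretic Brauer evaluation and the group-cohomology cup product, and then carry out the reduction through connecting homomorphisms.
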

\begin{proof}
To ease the notation, we omit the subscript $k_v$ indicating the
base change $(-) \otimes_k k_v$ for any scheme, whenever no confusion arises.
\begin{enumerate}
    \item 
The Brauer evaluation of the homogeneous space $X$ is induced from the following commutative diagram, as shown in \cite[Proposition 2.9-2.10]{CtX},
    \begin{equation}\begin{tikzcd}[column sep = 2ex]\label{eq:diag_cup_prod}
	{X(k_v)} & \times & {\Br_* X } & \to & {\Br k_v} & \xrightarrow{\exp(2\pi i (-))\circ \inv} & {\C^\times} \\
	{\mathrm{H}^1(k_v, \mathrm{S}_{x_0})} & \times & {\mathrm{H}^1(k_v, \hat{\mathrm{S}}_{x_0})} & \xrightarrow{\cup} & {\Br k_v} & \xrightarrow{\exp(2\pi i (-))\circ \inv} & {\C^\times}.
	\arrow[from=1-1, to=2-1]
	\arrow[Rightarrow, no head, from=1-5, to=2-5]
	\arrow[Rightarrow, no head, from=1-7, to=2-7]
	\arrow["\cong" labl, from=1-3, to=2-3]
\end{tikzcd}\end{equation}
Here, $\Br_* X \subset \Br X$ denotes the subgroup of the Brauer group consisting of the elements whose evaluation vanishes at $x_0 \in X(k_v)$.
The pairing in the first row is given by the Brauer evaluation for $\Br_* X$, and that of the second row is the cup product $\cup$ in group cohomology. 
The first vertical map is from the exact sequence (\ref{eq:long_exact_sequence_for_sln}), and the second vertical map is given by the composite of the isomorphisms in \cite[Proposition 2.10]{CtX} and in Theorem 2 in \cite[Section 4.3 of Chapter 2]{Vosk}:
\[\Br_* X \cong \Br X / \Br k_v  \cong \mathrm{H}^1(k_v, \widehat{\mathrm{S}}_{x_0}).\]
The above isomorphism $\Br_* X\cong \Br X/\Br k_v$ implies that, for $\xi \in \Br X$, there is an element $\xi_0 $ in $ \Br k_v$ whose evaluation equals $\xi_v(x_0)$ and hence $\xi - \xi_0 \in \Br_* X$. 
The evaluation of $\xi-\xi_0$ then equals the normalized evaluation $\tilde{\xi}_v$.
Therefore it suffices to descirbe the evaluation of $\xi-\xi_0$.

To obtain the Brauer evaluation of $\xi-\xi_0$, we make use of the cup product in the second row of (\ref{eq:diag_cup_prod}).
    Since the cup product is compatible with the inflation homomorphism and both $\mathrm{S}_{x_0}$ and $\widehat{\mathrm{S}}_{x_0}$ split over $K_v$, it turns to be the following cup product in group cohomology of $\Lambda_v$-modules:
    \begin{equation}\label{eq:cup_prod}\mathrm{H}^1(\Lambda_v, \mathrm{S}_{x_0}(K_v)) \times \mathrm{H}^1(\Lambda_v, \widehat{\mathrm{S}}_{x_0}) \xrightarrow{\cup} \mathrm{H}^2(\Lambda_v, \mathrm{S}_{x_0}(K_v) \otimes_\Z \widehat{\mathrm{S}}_{x_0}) \to \mathrm{H}^2(\Lambda_v, K_v^\times)\cong \Br k_v.\end{equation}
    We claim that the following diagram commutes: 
    \begin{equation}\begin{tikzcd}[column sep = 2ex]\label{eq:HHSK}
	{\mathrm{H}^1(\Lambda_v,\mathrm{S}_{x_0}(K_v))} & \times & {\mathrm{H}^1(\Lambda_v,\widehat{\mathrm{S}}_{x_0}}) & \xrightarrow{\cup} & {\mathrm{H}^2(\Lambda_v,\mathrm{S}_{x_0}(K_v) \otimes_\Z \widehat{\mathrm{S}}_{x_0}) } \\
	{\mathrm{H}^0(\Lambda_v, K_v^\times)} & \times & {\mathrm{H}^2(\Lambda_v, \Z)} & \xrightarrow{\cup} & {\mathrm{H}^2(\Lambda_v, K_v^\times)},
	\arrow["\delta_1", from=2-1, to=1-1]
	\arrow["\delta_2", from=1-3, to=2-3]
	\arrow[from=1-5, to=2-5]
\end{tikzcd}\end{equation}  
    where the right vertical map is induced from the canonical pairing 
    $\mathrm{S}_{x_0}(K_v)\otimes_\mathbb{Z}\hat{\mathrm{S}}_{x_0}\rightarrow K_v^\times$ of the character group. 
    Here, $\delta_1$ and $\delta_2$ are the connecting homomorphisms arising from the first and second exact sequences below, respectively:
\begin{equation}\label{eq:exact_SZ}
0\rightarrow S_{x_0}(K_v)\rightarrow (K_v\otimes_{k_v}K_v)^\times \rightarrow K_v^\times \rightarrow 0\text{ and }0\rightarrow\Z\rightarrow\Z[\Lambda_v]\rightarrow\hat{\mathrm{S}}_{x_0}\rightarrow 0.
\end{equation}
    These two exact sequences in (\ref{eq:exact_SZ}) split. Indeed, one can  choose splitting homomorphisms $K_v^\times \to (K_v \otimes_{k_v} K_v)^\times,\ a \mapsto 1 \otimes a$ and $\Z[\Lambda_v] \to \Z,\ \sum_{\sigma\in \Lambda_v} n_\sigma \sigma \mapsto n_{id}$, respectively.
    By applying $(-) \otimes_\Z \hat{\mathrm{S}}_{x_0}$ to the first exact sequence in (\ref{eq:exact_SZ}) and $K_v^\times \otimes_\Z (-)$ to the second, we have the following diagram
    \begin{equation}\label{eq:diagram_proof}
    \begin{tikzcd}
	0 & {\mathrm{S}_{x_0}(K_v) \otimes_\Z \hat{\mathrm{S}}_{x_0}} & {(K_v \otimes_{k_v} K_v)^\times \otimes_\Z \hat{\mathrm{S}}_{x_0}}& {K_v^\times \otimes_\Z \hat{\mathrm{S}}_{x_0}} & 0 \\
	0 & {K_v^\times} & {K_v^\times \otimes_\Z \Z[\Lambda_v]} & {K_v^\times \otimes_\Z\hat{\mathrm{S}}_{x_0}} & 0,
	\arrow[from=1-1, to=1-2]
	\arrow[from=1-2, to=1-3]
	\arrow[from=1-2, to=2-2]
	\arrow[from=1-3, to=1-4]
	\arrow[dotted, from=1-3, to=2-3]
	\arrow[from=1-4, to=1-5]
	\arrow[equal, from=1-4, to=2-4]
	\arrow[from=2-1, to=2-2]
	\arrow[from=2-2, to=2-3]
	\arrow[from=2-3, to=2-4]
	\arrow[from=2-4, to=2-5]
    \end{tikzcd}\end{equation}
    where the first vertical map is given by the canonical pairing of the character group.
    Since both two functors $(-) \otimes_\Z \hat{\mathrm{S}}_{x_0}$ and $K_v^\times \otimes_\Z (-)$ preserve split exactness, two rows remain split exact.
    Hence, there is an induced map
    $(K_v \otimes_{k_v} K_v)^\times \otimes_\Z\hat{\mathrm{S}}_{x_0}\rightarrow K_v^\times\otimes_\Z \Z[\Lambda_v]$ obtained via the splitting homomorphisms, which makes the diagram commute.
    We then have the following commutative diagram
      \[\begin{tikzcd}
	{\mathrm{H}^1(\Lambda_v,K_v^\times \otimes_\mathbb{Z} \hat{\mathrm{S}}_{x_0})} & {\mathrm{H}^2(\Lambda_v, \mathrm{S}_{x_0} (K_v)\otimes_\mathbb{Z} \hat{\mathrm{S}}_{x_0})} \\
	{\mathrm{H}^1(\Lambda_v,K_v^\times \otimes_\mathbb{Z} \hat{\mathrm{S}}_{x_0})} & {\mathrm{H}^2(\Lambda_v, K_v^\times)}.
	\arrow["{\delta_3}", from=1-1, to=1-2]
	\arrow[equal, from=1-1, to=2-1]
	\arrow[from=1-2, to=2-2]
	\arrow["{\delta_4}", from=2-1, to=2-2]
\end{tikzcd}\]
where $\delta_3$ and $\delta_4$ denotes the connecting homomorphisms arising from the first and second rows in (\ref{eq:diagram_proof}), respectively. 
    For $a \in \mathrm{H}^0(\Lambda_v, K_v^\times)$ and $b \in \mathrm{H}^1(\Lambda_v, \hat{\mathrm{S}}_{x_0})$, it follows from \cite[Proposition 5, VIII]{Ser} that \[\delta_3(a \cup b) = (\delta_1 a) \cup b\text{ and }\delta_4 (a \cup b) = a \cup (\delta_2 b).\]
    Therefore the commutativity of $\delta_3$ and $\delta_4$ yields the commutativity of the diagram (\ref{eq:HHSK}).

By \cite[p. 388]{WX}, the image of $x$ in $\mathrm{H}^1(k_v,\mathrm{S}_{x_0})$ under the first vertical map of (\ref{eq:diag_cup_prod}) coincides with $\delta_1(\det g_x)$.
Let $\chi \in \Hom(\Lambda_v, \Q/\Z)$ be the image of $\xi - \xi_0$ under
    \[\Psi_v:\Br_* X  \xrightarrow{\cong} \Hom(\Lambda_v, \Q/\Z). \]
    From the construction of $\Psi_v$, the image of $\xi-\xi_0$ under the composite map
    \[
    \mathrm{Br}_*X\xrightarrow{\cong}\mathrm{H}^1(\Lambda_v,\hat{\mathrm{S}}_{x_0})\xrightarrow{\delta_2}\mathrm{H}^2(\Lambda_v,\mathbb{Z})
    \]
    coincides with $d\chi$ where $d : \Hom(\Lambda_v, \Q/\Z) = \mathrm{H}^1(\Lambda_v, \Q/\Z) \to \mathrm{H}^2(\Lambda_v, \Z)$ denotes the connecting homomorphism derived from the exact sequence \[0 \to \Z \to \Q \to \Q/\Z \to 0.\]
    By the commutativity of the diagram (\ref{eq:HHSK}), the evaluation of $\xi - \xi_0$ in (\ref{eq:diag_cup_prod}) is given by
    \[\tilde{\xi}_v(x)=(\xi-\xi_0)_v(x) = \exp 2\pi i \big(\inv(\det g_x \cup d \chi)\big) = \exp 2\pi i\Psi(\xi) (\phi_{K_v/k_v}(\det g_x)).\]
    Here, the last equality follows from \cite[Proposition 2, XI]{Ser} and the diagram (\ref{eq:Psi_local_global}).
    \item
    Since $\mathrm{Gal}(K_{v,i}/k_v)\subset \mathrm{Gal}(K/k)\cong \mathbb{Z}/n\mathbb{Z}$ and $n$ is prime, 
    $\chi(x)$ splits completely into linear factors $\chi_{v,i}(x)$ over $k_v$ where $\chi(x)=\prod_{i\in B_v(\chi)}\chi_{v,i}(x)$.
    From the proof of Proposition \ref{prop:isomorphism_Br_hom}, we have
    \[\Br X / \Br k_v \cong \Pic \mathrm{S}_{x_0} \cong \mathrm{H}^1(k_v, \widehat{\mathrm{S}}_{x_0}) .\]
    By Lemma \ref{lem:centralizer_str}, $\mathrm{S}_{x_0}$ is identified with the kernel of the multiplication map $\mathbb{G}_{m, k_v}^n \to \mathbb{G}_{m, k_v}$, and hence isomorphic to $\mathbb{G}_{m, k_v}^{n-1}$.
    Since $\mathrm{S}_{x_0}$ is split over $k_v$, the Galois action on the character group $\widehat{\mathrm{S}}_{x_0}$ is trivial.
    Therefore $\Br X / \Br k_v$ is trivial and Remark \ref{rem:normalized_eval} yields that the normalized evaluation $\tilde{\xi}_v$ is trivial.
    \end{enumerate}
\end{proof}

\begin{corollary}\label{cor:comparerationalstable}
    Suppose that $K/k$ is Galois.  
    For $v\in \Omega_k\setminus\infty_k$ such that $K_v$ is a field, the normalized Brauer evaluation $\tilde{\xi}_v$ for $\xi \in \Br X$ is constant on each $\mathrm{SL}_n(k_v)$-orbits in $X(k_v)$.
\end{corollary}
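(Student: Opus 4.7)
The plan is to read off the claim directly from the explicit formula of Proposition \ref{prop:evaluation}.(1), which expresses
\[\tilde{\xi}_v(x) = \exp 2\pi i\, \Psi(\xi)\bigl(\phi_{K_v/k_v}(\det g_x)\bigr)\]
for any $g_x \in \mathrm{GL}_n(k_v)$ satisfying $g_x^{-1} x_0 g_x = x$. The essential observation is that two points of $X(k_v)$ lying in the same $\mathrm{SL}_n(k_v)$-orbit differ by conjugation by an element of determinant $1$, so the corresponding $g_x$'s differ by a factor of determinant $1$, leaving the formula unchanged.

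More precisely, first I would note that the formula is insensitive to the choice of $g_x$: two choices $g_x, g_x'$ differ by left multiplication by an element of $\mathrm{T}_{x_0}(k_v)$, which under the isomorphism $\mathrm{T}_{x_0, k_v} \cong \mathrm{R}_{K_v/k_v}(\mathbb{G}_{m,K_v})$ of Proposition \ref{prop:homogeneous}.(2) (using that $K_v$ is a field) contributes a factor $\mathrm{Nm}_{K_v/k_v}(t) \in k_v^\times$ to $\det g_x$. Since $\phi_{K_v/k_v}$ vanishes on the image of the norm, the evaluation is unchanged. This well-definedness is essentially already contained in the proof of Proposition \ref{prop:evaluation}.(1), so I would just record it in one sentence.

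Next, given $x, x' \in X(k_v)$ in the same $\mathrm{SL}_n(k_v)$-orbit, I would pick $h \in \mathrm{SL}_n(k_v)$ with $x' = h^{-1} x h$ and a fixed $g_x \in \mathrm{GL}_n(k_v)$ with $g_x^{-1} x_0 g_x = x$. Setting $g_{x'} := g_x h$ gives $g_{x'}^{-1} x_0 g_{x'} = h^{-1}(g_x^{-1} x_0 g_x) h = h^{-1} x h = x'$, so $g_{x'}$ is a valid choice. Then $\det g_{x'} = \det g_x \cdot \det h = \det g_x$, since $\det h = 1$, and therefore
\[\tilde{\xi}_v(x') = \exp 2\pi i\, \Psi(\xi)\bigl(\phi_{K_v/k_v}(\det g_{x'})\bigr) = \exp 2\pi i\, \Psi(\xi)\bigl(\phi_{K_v/k_v}(\det g_x)\bigr) = \tilde{\xi}_v(x).\]

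There is no real obstacle here; the only minor point requiring care is the well-definedness remark, which reduces to the equality $\det \mathrm{T}_{x_0}(k_v) = \mathrm{Nm}_{K_v/k_v}(K_v^\times)$ via Proposition \ref{prop:homogeneous}.(2). Everything else is a one-line manipulation of determinants.
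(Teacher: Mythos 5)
Your proof is correct and follows the same approach as the paper, which simply cites Proposition \ref{prop:evaluation}.(1): if $x' = h^{-1}xh$ with $h \in \mathrm{SL}_n(k_v)$, then one may take $g_{x'} = g_x h$, so $\det g_{x'} = \det g_x$ and the formula gives the same value. Your additional remark about well-definedness of the formula (that different choices of $g_x$ differ by $\mathrm{T}_{x_0}(k_v)$, whose determinants land in $\mathrm{Nm}_{K_v/k_v}(K_v^\times)$) is a nice explicit record of something the paper leaves implicit in the proof of Proposition \ref{prop:evaluation}.
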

\begin{proof}
    For $x $ and $x'$ on the same $\mathrm{SL}_n(k_v)$-orbit in $X(k_v)$, we have $x = x' \cdot g$ for some $g \in \mathrm{SL}_n(k_v)$. 
    Then, by Proposition \ref{prop:evaluation}, we have $\tilde{\xi}_v(x) = \tilde{\xi}_v(x')$ for any $\xi \in \Br X$.
\end{proof}

\begin{corollary}\label{cor:nontrivial_eval}
    Suppose that $K/k$ is Galois. 
    For $v\in \Omega_k\setminus\infty_k$ such that $K_v$ is a field, if $\xi$ is a non-trivial element in $\Br X / \Br k$, then the normalized evaluation $\tilde{\xi}_v$ is non-trivial.
\end{corollary}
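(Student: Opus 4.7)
The plan is to combine the explicit formula from Proposition \ref{prop:evaluation}(1) with the surjectivity of two natural maps into $\Lambda = \Gal(K/k)$, making decisive use of the primality of $n$ from assumption (\ref{eq:condition45}).

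First, I would unpack what non-triviality of $\xi \in \Br X /\Br k$ says. Since $\Psi$ of (\ref{eq:Psi}) is an isomorphism, $\Psi(\xi) \in \Hom(\Lambda, \Q/\Z)$ is a non-trivial character. Because $|\Lambda| = [K:k] = n$ is prime, $\Lambda \cong \Z/n\Z$, so every non-zero character of $\Lambda$ is in fact faithful. In particular, $\Psi(\xi)(\sigma) \neq 0$ for every non-identity $\sigma \in \Lambda$.

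Next I would analyse the decomposition group $\Lambda_v = \Gal(K_v/k_v) \subset \Lambda$. Because $K_v$ is a non-trivial field extension of $k_v$, $\Lambda_v$ is a non-trivial subgroup of $\Lambda$, and by primality of $n$ we must have $\Lambda_v = \Lambda$. Thus $\Psi(\xi)$ remains non-trivial upon restriction to $\Lambda_v$, which via the commutative square (\ref{eq:Psi_local_global}) expresses the fact that $\xi$ is still non-trivial in $\Br X_{k_v} / \Br k_v$.

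The remaining step is to exhibit an $x \in X(k_v)$ on which the formula from Proposition \ref{prop:evaluation}(1) is non-trivial. Consider the composition
\[
X(k_v) \longtwoheadrightarrow \mathrm{H}^1(k_v, \mathrm{S}_{x_0, k_v}) \xrightarrow{\sim} k_v^\times / \Nm_{K_v/k_v} K_v^\times \xrightarrow{\phi_{K_v/k_v}} \Lambda_v,
\]
where the first map is surjective by (\ref{eq:long_exact_sequence_for_sln}) combined with $\mathrm{H}^1(k_v, \mathrm{SL}_{n,k_v}) = 1$, the second isomorphism is from Remark \ref{rmk:h1gal}, and $\phi_{K_v/k_v}$ is the Artin reciprocity. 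By the explicit description in the proof of Proposition \ref{prop:evaluation}(1), this composite sends $x \mapsto \phi_{K_v/k_v}(\det g_x)$, and it is surjective onto $\Lambda_v = \Lambda$. Choose any $x \in X(k_v)$ whose image under this composite is a non-identity $\sigma \in \Lambda$; then $\tilde{\xi}_v(x) = \exp 2\pi i\, \Psi(\xi)(\sigma) \neq 1$, completing the argument.

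No step appears genuinely difficult here since the two Propositions \ref{prop:evaluation} and \ref{prop:number_orbit} have already done the heavy lifting; the only subtlety to be careful about is the use of the primality of $n$, which is exactly what rules out the possibility of a proper intermediate decomposition group on which $\Psi(\xi)$ could restrict to zero.
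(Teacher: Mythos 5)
Your proof is correct and takes essentially the same approach as the paper: identify $\Lambda_v = \Lambda$, observe that $\Psi(\xi)$ is a non-trivial character, and then exhibit a point $x \in X(k_v)$ with non-trivial evaluation by feeding the surjectivity of $X(k_v) \twoheadrightarrow \mathrm{H}^1(k_v, \mathrm{S}_{x_0,k_v}) \cong k_v^\times/\Nm_{K_v/k_v}K_v^\times \xrightarrow{\phi_{K_v/k_v}} \Lambda_v$ into the formula of Proposition \ref{prop:evaluation}(1), where the paper instead produces the witness explicitly as $x_0\cdot \diag(c,1,\ldots,1)$ with $c$ a lift of $\phi_{K_v/k_v}^{-1}(\sigma)$. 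Two small remarks: $\Lambda_v=\Lambda$ already follows from $K_v$ being a field of degree $n$ over $k_v$ (so the primality is a detour here), and one only needs a single $\sigma$ with $\Psi(\xi)(\sigma)\neq 0$ rather than the faithfulness of $\Psi(\xi)$ on all of $\Lambda$.
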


\begin{proof}
    Since $\Lambda_v\subset \Lambda$ is non-trivial, the hypothesis on $K/k$ directly yields that $\Lambda=\Lambda_v$.
    By Proposition \ref{prop:isomorphism_Br_hom}, $\Psi(\xi) \in \Hom(\Lambda, \Q/\Z)$ is non-trivial.
    In other words, there exists $\sigma \in \Gal(K/k)=\Gal(K_v/k_v)$ such that $\Psi(\xi)(\sigma) \neq 0$.
    Let $c \in k_v^\times$ be any lift of $\phi_{K_v/k_v}^{-1}(\sigma)\in k_v^\times / \Nm_{K_v/k_v}K_v^\times$ and we set $g = \diag(c, 1, \ldots, 1) \in \mathrm{GL}_n(k_v)$.
    Then, $\tilde{\xi}_v(x_0 \cdot g)$ is nontrivial by Proposition \ref{prop:evaluation}.
\end{proof}

\section{Compuation of local integrals}\label{sec:integral_computation}
We now compute the local integral of the Brauer evaluation $\tilde{\xi}_v$ under the assumption (\ref{eq:condition45}) in Section \ref{sec:5}.
In the case that the evaluation $\tilde{\xi}_v$ is trivial, 
it suffices to compute the volume of $X(k_v, T)$ for $v\in\infty_k$ and that of $X(\mathcal{O}_{k_v})$ for $v\in \Omega_k/\infty_k$, with respect to $d\mu_v$ defined in (\ref{eq:quotient_measure}).
Otherwise, $K_v/k_v$ is a Galois field extension by Proposition \ref{prop:not_galois} and Proposition \ref{prop:evaluation}.
In this case, we will address the two cases separately: when $K_v/k_v$ is ramified and when $K_v/k_v$ is unramified.

\subsection{The case that the evaluation $\tilde{\xi}_v$ is trivial}
By Proposition \ref{prop:not_galois}, Proposition \ref{prop:evaluation}, and Remark \ref{rem:normalized_eval}, the evaluation $\tilde{\xi}_v$ is trivial in the following cases:
\[\left\{\begin{array}{l}
     K/k\textit{ is not a Galois field extension};\\
     K/k\textit{ is a Galois field extension and $K_v$ is not a field};\\
     K/k\textit{ is a Galois field extension, $K_v$ is a field, and $\xi\in \Br X/\Br k$ is trivial}.
\end{array}\right.
\]
Otherwise, $K_v/k_v$ is a Galois field extension and $\xi \in \Br X/\Br k$ is non-trivial, then the evaluation $\tilde{\xi}_v$ is non-trivial by Proposition \ref{cor:nontrivial_eval}.

In the case that $v\in \infty_k$, by the assumption, $\chi(x)$ splits completely over $k_v\cong \mathbb{R}$ and hence $\tilde{\xi}_v\equiv 1$ for $v\in\infty_k$, regardless of whether $K/k$ is Galois or not.
\begin{proposition}[{\cite[Lemma A.1]{Lee25}}]\label{prop:eq_int_arch}
We have
\[ vol(d\mu_\infty,X(k_\infty,T))\sim\left( \frac{w_n\pi^{\frac{n(n+1)}{4}} }{\prod_{i=1}^n \Gamma(\frac{i}{2})} T^{\frac{n(n-1)}{2}}  \right)^{[k:\Q]}\prod_{v\in\infty_k}|\Delta_\chi|_v^{-\frac{1}{2}},\]
where $w_n$ is the volume of the unit ball in $\mathbb{R}^{\frac{n(n-1)}{2}}$.
\end{proposition}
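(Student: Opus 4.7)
The plan is to reduce the product integral to a product of single-place volume computations via the triviality of $\tilde{\xi}_v$ at archimedean places, and then apply \cite[Lemma A.1]{Lee25} at each place.

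First, I would note that by the total realness of $k$ and $K$, every $v\in\infty_k$ satisfies $k_v\cong\R$ with $\chi(x)$ splitting completely into $n$ distinct real linear factors, so $K_v\cong\R^n$ is not a field. Proposition~\ref{prop:evaluation}(2) then gives $\tilde{\xi}_v\equiv 1$ for each $v\in\infty_k$, regardless of whether $K/k$ is Galois. Since $X(k_\infty,T)=\prod_{v\in\infty_k}X(k_v,T)$ and $d\mu_\infty=\prod_{v\in\infty_k}d\mu_v$ by construction, the left hand side of the claim factors as the product $\prod_{v\in\infty_k} vol(d\mu_v, X(k_v, T))$, and it suffices to handle one place at a time.

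Second, at a single $v\in\infty_k$ I would pull back $d\mu_v=|\omega_{X_{k_v}}^{can}|_v$ along the homogeneous identification $X(k_v)\cong \mathrm{T}_{x_0}(k_v)\backslash \GL_n(k_v)$ of Proposition~\ref{prop:homogeneous}. A Weyl-integration-type Jacobian computation for the conjugation map $g\mapsto g^{-1}x_0 g$ yields the discriminant factor $|\Delta_\chi|_v^{-1/2}$, and a Cartan decomposition $\GL_n(\R)=O(n)\cdot A\cdot O(n)$ (with $A$ the diagonal torus) reduces the norm constraint $\|g^{-1}x_0 g\|_v\le T$ to an explicit region. The growth $T^{n(n-1)/2}$ reflects that the regular conjugacy class of $x_0$ has dimension $n(n-1)$ with a compact rotational symmetry preserving the norm, and the constant $w_n\pi^{n(n+1)/4}/\prod_{i=1}^n \Gamma(i/2)$ arises from combining the volume $w_n$ of a Euclidean unit ball with the classical volume formula for $O(n)$. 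This single-place asymptotic is exactly \cite[Lemma A.1]{Lee25}.

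Finally, taking the product over $v\in\infty_k$ and using $|\infty_k|=[k:\Q]$ (by total realness of $k$) produces the stated formula, with the place-independent constant raised to the power $[k:\Q]$ and the discriminant factors assembled into $\prod_{v\in\infty_k}|\Delta_\chi|_v^{-1/2}$. The main obstacle would be the careful matching of measure normalizations between $\omega_{X_{k_v}}^{can}$ and the Euclidean measure on the conjugacy class in order to pin down the exact numerical constant, but as this is fully handled in \cite[Lemma A.1]{Lee25}, the present proof reduces to invoking that lemma and multiplying.
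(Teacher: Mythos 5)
Your proposal is essentially the paper's own approach: after observing that the asymptotic over $k_\infty$ factors over the places in $\infty_k$ (since $\norm{\cdot}_\infty$ is the max over $\norm{\cdot}_v$ and $d\mu_\infty$ is the product measure), the whole computation is deferred to~\cite[Lemma~A.1]{Lee25}. The paper's proof is a bit more explicit about the setup you compress into ``pull back along the homogeneous identification'': it diagonalizes $x_0$ over $k_v\cong\R$ to $\lambda=\diag(\lambda_1,\dots,\lambda_n)$ via some $g_0\in\GL_n(k_v)$, then builds the $\GL_n(k_v)$-equivariant isomorphism $\mathrm{T}_\lambda(k_v)\backslash\GL_n(k_v)\xrightarrow{\sim}\mathrm{T}_{x_0}(k_v)\backslash\GL_n(k_v)$ given by left-translation by $g_0$; with that in hand the statement is literally~\cite[Lemma~A.1]{Lee25}. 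Your sketch of what happens inside that lemma (the Weyl-integration Jacobian yielding $|\Delta_\chi|_v^{-1/2}$, the Cartan decomposition, the $O(n)$ volume) is consistent with it but is more detail than the paper itself supplies. One small remark: your opening paragraph about $\tilde{\xi}_v\equiv 1$ is not needed for the statement as labeled (which is a pure volume assertion, with no Brauer evaluation appearing); that reduction belongs to the surrounding discussion that derives Proposition~\ref{prop:int_arch} from Proposition~\ref{prop:eq_int_arch}, so including it here is harmless but slightly misplaced.
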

\begin{proof}
For a fixed $v \in \infty_k$, since the polynomial $\chi(x)$ splits completely over $k_v\cong \mathbb{R}$,
there exists $g_0\in \mathrm{GL}_n(k_v)$ such that $g_0^{-1}x_0 g_0=\lambda$
where $\lambda := \diag(\lambda_1, \ldots, \lambda_n) \in X(k_v)$ and $\lambda_1, \ldots, \lambda_n \in k_v$ are the roots of $\chi(x)$.
We note that the roots $\lambda_i$ are distinct since $\lambda$ is regular.
Hence, the centralizer $\mathrm{T}_{\lambda}(k_v)$ of $\lambda$ is the set of diagonal matrices in $\mathrm{GL}_n(k_v)$. 

We can contruct the isomorphism as in the proof of \cite[Lemma A.1]{Lee25}:
\[
\phi: \mathrm{T}_\lambda(k_v)\backslash \mathrm{GL}_n(k_v)\xrightarrow{\cong} \mathrm{T}_{x_0}(k_v)\backslash \mathrm{GL}_n(k_v),\ \mathrm{T}_\lambda(k_v)g\mapsto \mathrm{T}_{x_0}(k_v) g_0g,
\]which is $\mathrm{GL}_n(k_v)$-invariant.
Then the proof directly follows from that of \cite[Lemma A.1]{Lee25}.
\end{proof}

On the other hand, for $v\in \Omega_k\setminus \infty_k$, the integral turns out to be the orbital integral for $\mathfrak{gl}_n$.
\begin{proposition}\label{prop:result_case:trivial}
For $v\in \Omega_k \setminus \infty_k$, we identify $\mathrm{M}_{n}(\mathcal{O}_{k_v})$ with $\mathfrak{gl}_{n}(\mathcal{O}_{k_v})$ so that $x_0 \in \mathfrak{gl}_{n}(\mathcal{O}_{k_v})$.
Then, we have 
    \begin{equation}\label{eq:orbital_integral}
    vol(d\mu_v, X(\mathcal{O}_{k_v}))=\int_{\mathrm{T}_{x_0}(k_v)\backslash \mathrm{GL}_n(k_v)}\mathbbm{1}_{\mathfrak{gl}_{n}(\mathcal{O}_{k_v})}(g^{-1}x_0g)\ d\mu_v.
    \end{equation}
    Here, the right hand side is called the orbital integral of $\mathfrak{gl}_n$ for $\mathbbm{1}_{\mathfrak{gl}_n(\mathcal{O}_{k_v})}$ and the characteristic polynomial $\chi(x)$ with respect to the measure $d\mu_v$ (see \cite[Section 1.3]{Yun13}). 
\end{proposition}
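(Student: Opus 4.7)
The plan is to unfold both sides of the claimed equality via the orbit-space description of $X$ and then observe that the indicator function of $X(\mathcal{O}_{k_v})$ on $X(k_v)$ pulls back to the indicator function of $\mathfrak{gl}_n(\mathcal{O}_{k_v})$ along the orbit map. First I would invoke Proposition \ref{prop:gln_orbit}, which gives the bijection $\varphi_{x_0}:\mathrm{T}_{x_0}(k_v)\backslash\mathrm{GL}_n(k_v)\xrightarrow{\sim} X(k_v)$, $g\mapsto g^{-1}x_0 g$, and recall from the definition of $d\mu_v$ in (\ref{eq:quotient_measure}) that, for $v\in\Omega_k\setminus\infty_k$, the measure $d\mu_v$ on $X(k_v)$ is by construction the pushforward of the quotient measure $dg_v/dt_v$ along $\varphi_{x_0}$. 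Changing variables accordingly rewrites the left-hand side of (\ref{eq:orbital_integral}) as
\[
vol(d\mu_v, X(\mathcal{O}_{k_v})) = \int_{\mathrm{T}_{x_0}(k_v)\backslash\mathrm{GL}_n(k_v)} \mathbbm{1}_{X(\mathcal{O}_{k_v})}\bigl(g^{-1}x_0 g\bigr)\, d\mu_v.
\]

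Next I would identify the pulled-back indicator. Since $X$ is defined as the closed subscheme of $\mathrm{M}_{n,\mathcal{O}_k}$ cut out by the characteristic polynomial condition, an element $x\in X(k_v)\subset \mathrm{M}_n(k_v)=\mathfrak{gl}_n(k_v)$ lies in $X(\mathcal{O}_{k_v})$ if and only if it is integral as a matrix, i.e.\ lies in $\mathfrak{gl}_n(\mathcal{O}_{k_v})$. In other words, $X(\mathcal{O}_{k_v}) = X(k_v)\cap \mathfrak{gl}_n(\mathcal{O}_{k_v})$ as subsets of $\mathfrak{gl}_n(k_v)$. Since $g^{-1}x_0 g$ always belongs to $X(k_v)$ by the conjugation-invariance of the characteristic polynomial, it follows that
\[
\mathbbm{1}_{X(\mathcal{O}_{k_v})}\bigl(g^{-1}x_0 g\bigr) \;=\; \mathbbm{1}_{\mathfrak{gl}_n(\mathcal{O}_{k_v})}\bigl(g^{-1}x_0 g\bigr)
\]
for every $g\in\mathrm{GL}_n(k_v)$. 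Substituting this into the displayed integral yields precisely the orbital integral on the right-hand side of (\ref{eq:orbital_integral}).

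No substantial obstacle is anticipated: the proof is a bookkeeping exercise assembling Proposition \ref{prop:gln_orbit}, the definition of $d\mu_v$ in (\ref{eq:quotient_measure}), and the closed-subscheme description of $X\hookrightarrow \mathrm{M}_{n,\mathcal{O}_k}$. The only subtlety worth flagging is that the measure on $X(k_v)$ is transported faithfully under $\varphi_{x_0}$, but this is exactly what the definition of $d\mu_v$ encodes, so no extra verification is needed.
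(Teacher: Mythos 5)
Your proposal is correct and follows essentially the same route as the paper's proof: both invoke Proposition \ref{prop:gln_orbit} to identify $X(k_v)$ with $\mathrm{T}_{x_0}(k_v)\backslash\mathrm{GL}_n(k_v)$ and then observe that $X(\mathcal{O}_{k_v})$ pulls back to the locus where $g^{-1}x_0g\in\mathfrak{gl}_n(\mathcal{O}_{k_v})$. You spell out the change-of-variables and indicator-function bookkeeping a bit more explicitly, but the content is identical.
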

\begin{proof}
    By Proposition \ref{prop:gln_orbit}, we have
    \[
    X(k_v)\cong \mathrm{T}_{x_0}(k_v)\backslash \mathrm{GL}_n(k_v).
    \]
    Under this identification, we have
    $X(\mathcal{O}_{k_v})\cong \{g\in \mathrm{T}_{x_0}(k_v)\backslash \mathrm{GL}_n(k_v)\mid g^{-1}x_0g \in \mathfrak{gl}_n(\mathcal{O}_{k_v})\}$ and this yields the formula.
\end{proof}
We denote by $\mathcal{O}_{\chi,d\mu_v}(\mathbbm{1}_{\mathfrak{gl}_{n}(\mathcal{O}_{k_v})})$ the right hand side of (\ref{eq:orbital_integral}).
In general, obtaining the closed formula of the orbital integral for $\mathfrak{gl}_n$ is a challenging problem, and the difficulty increases as the rank $n$ grows.
In the case when $n=2$ and $3$, we summarize the closed formula of the product of local orbital integrals of $\mathfrak{gl}_n$ in Proposition \ref{prop:productoforbitalintegral}.

\begin{remark}\label{rmk:evalforEMS}
    By \cite[Theorem 1.5]{Yun13}, the orbital integral $\mathcal{O}_{\chi,d\mu_v}(\mathbbm{1}_{\mathfrak{gl}_n(\mathcal{O}_{k_v})})$ is formulated by a $\mathbb{Z}$-polynomial in $q_v$ (so that it is an integer).
    If $\chi(x)$ is an irreducible polynomial such that $\mathcal{O}_K=\mathcal{O}_k[x]/(\chi(x))$ (as in the assumption (2) in (\ref{eq:conditiononEMS})), $\mathcal{O}_{\chi,d\mu_v}(\mathbbm{1}_{\mathfrak{gl}_n(\mathcal{O}_{k_v})})=1$.
\end{remark}

\subsection{The case that the evaluation $\tilde{\xi}_v$ is non-trivial and $K_v/k_v$ is ramified}
In the case that $\tilde{\xi}_v$ is non-trivial and $K_v/k_v$ is ramified, we adapt the method of the proof of \cite[Theorem 6.1]{WX}.
\begin{proposition}\label{prop:ramify_zero}
    For $v\in\Omega_k$ such that $K/k$ is Galois and $K_v/k_v$ is ramified, if
     $\xi \in \Br X/\Br k$ is non-trivial, then we have
     \[ \int_{X(\mathcal{O}_{k_v})}\tilde{\xi}_v\ d\mu_v =0.\]
\end{proposition}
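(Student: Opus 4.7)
The plan is to exhibit a measure-preserving symmetry of $X(\mathcal{O}_{k_v})$ under which the integrand $\tilde{\xi}_v$ picks up a non-trivial multiplicative scalar, forcing the integral to vanish. For any $h \in \mathrm{GL}_n(\mathcal{O}_{k_v})$ the conjugation map $\rho_h : x \mapsto h^{-1} x h$ sends $X(\mathcal{O}_{k_v})$ to itself, since $h, h^{-1} \in \mathrm{M}_n(\mathcal{O}_{k_v})$ and the characteristic polynomial is preserved. Under the identification $X(k_v) \cong \mathrm{T}_{x_0}(k_v)\backslash \mathrm{GL}_n(k_v)$ of Proposition \ref{prop:gln_orbit}, $\rho_h$ corresponds to right translation by $h$, and since $d\mu_v = dg_v/dt_v$ is the quotient of a Haar measure on $\mathrm{GL}_n(k_v)$, it is invariant under this translation.

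Next, writing $x = g_x^{-1} x_0 g_x$ with $g_x \in \mathrm{GL}_n(k_v)$ gives $h^{-1} x h = (g_x h)^{-1} x_0 (g_x h)$. The explicit formula in Proposition \ref{prop:evaluation}(1), together with the fact that $\Psi(\xi) \circ \phi_{K_v/k_v}$ is a homomorphism on $k_v^\times$, yields
\[
\tilde{\xi}_v(h^{-1} x h) \;=\; \tilde{\xi}_v(x)\cdot \exp\!\bigl(2\pi i\,\Psi(\xi)(\phi_{K_v/k_v}(\det h))\bigr).
\]
Combining this with the invariances of $d\mu_v$ and $X(\mathcal{O}_{k_v})$ under $\rho_h$, and performing a change of variables, we obtain for every $h \in \mathrm{GL}_n(\mathcal{O}_{k_v})$,
\[
\int_{X(\mathcal{O}_{k_v})}\tilde{\xi}_v(x)\,d\mu_v \;=\; \exp\!\bigl(2\pi i\,\Psi(\xi)(\phi_{K_v/k_v}(\det h))\bigr)\int_{X(\mathcal{O}_{k_v})}\tilde{\xi}_v(x)\,d\mu_v.
\]
It therefore suffices to produce a single $h$ for which the scalar on the right differs from $1$.

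The main point, and the step requiring care, is exhibiting such an $h$. Since $\chi(x)$ has prime degree $n$, the Galois extension $K_v/k_v$ of degree $n$ is either unramified or totally ramified; by hypothesis we are in the latter case. Local class field theory then identifies the image of $\mathcal{O}_{k_v}^\times$ inside $k_v^\times / \Nm_{K_v/k_v}(K_v^\times) \cong \Gal(K_v/k_v)$ via $\phi_{K_v/k_v}$ with the inertia subgroup, which in the totally ramified case is all of $\Gal(K_v/k_v) = \Lambda_v = \Lambda$. Moreover, since $\xi$ is non-trivial in $\Br X / \Br k$ and $\Lambda \cong \Z/n\Z$ has prime order, $\Psi(\xi) : \Lambda \to \Q/\Z$ is injective. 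Choosing $u \in \mathcal{O}_{k_v}^\times$ with $\phi_{K_v/k_v}(u) \neq 1$ and setting $h = \diag(u, 1, \ldots, 1) \in \mathrm{GL}_n(\mathcal{O}_{k_v})$, we get $\Psi(\xi)(\phi_{K_v/k_v}(\det h)) = \Psi(\xi)(\phi_{K_v/k_v}(u)) \neq 0$ in $\Q/\Z$, and the displayed identity then forces the integral to be zero.
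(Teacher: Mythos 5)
Your proof is correct and follows essentially the same strategy as the paper: exhibit $h=\diag(u,1,\ldots,1)\in\mathrm{GL}_n(\mathcal{O}_{k_v})$ with $u\in\mathcal{O}_{k_v}^\times\setminus\Nm_{K_v/k_v}K_v^\times$, observe that conjugation by $h$ preserves both $X(\mathcal{O}_{k_v})$ and $d\mu_v$ while multiplying $\tilde{\xi}_v$ by a non-trivial root of unity (via Proposition~\ref{prop:evaluation} and primality of $n$), and conclude. The only cosmetic difference is that you produce $u$ by arguing through the inertia subgroup being all of $\Gal(K_v/k_v)$ in the totally ramified case, whereas the paper cites the fact that $K_v/k_v$ is ramified iff some unit fails to be a norm; these are equivalent formulations of the same local-class-field-theory input.
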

\begin{proof}
By \cite[(1.7) Proposition, Chapter 5]{Neu}, $K_v/k_v$ is ramified if and only if there exists an element $u\in \mathcal{O}_{k_v}^\times$ such that $u \notin \Nm_{K_v/k_v} K_v^\times$. 
Therefore the image of $u$ under the Artin reciprocity isomorphism $\phi_{K_v/k_v}$, in Proposition \ref{prop:evaluation}, is non-trivial in $\Gal(K_v/k_v)$. 
By the assumption that $K/k$ is Galois, we have that $\Gal(K/k) = \Gal(K_v/k_v)$. 
Therefore, the assumption that $\deg K_v/k_v=n$ is prime yields
\[\exp 2\pi i \Psi(\xi)(\phi_{K_v/k_v}(u)) \neq 1,\]
where $\Psi$ is defined in Proposition \ref{prop:evaluation}

We set $g := \diag(u,1,\ldots,1) \in \mathrm{GL}_{n}(\mathcal{O}_k)$. 
Since $d\mu_v$ is a $\mathrm{GL}_n(k_v)$-invariant measure and $X(\mathcal{O}_k)$ is stable under the $\mathrm{GL}_{n}(\mathcal{O}_k)$-action, by Proposition \ref{prop:evaluation}, we have
\begin{align*}
    \int_{X(\mathcal{O}_{k_v})} \tilde{\xi}_v(x)  \  d\mu_v &= \int_{X(\mathcal{O}_{k_v})} \tilde{\xi}_v(x\cdot g) \ d\mu_v \\
&= \exp 2\pi i \Psi(\xi)(\phi_{K_v/k_v}(u)) \int_{X(\mathcal{O}_{k_v}
)} \tilde{\xi}_v(x) \  d\mu_v.
\end{align*}
This directly yields that
\[\int_{X(\mathcal{O}_{k_v})} \tilde{\xi}_v \ d\mu_v=0.\]
\end{proof}


\subsection{The case that the evaluation \texorpdfstring{$\tilde{\xi}_v$}{tilde{xi}v} is non-trivial and \texorpdfstring{$K_v/k_v$}{Kv/kv} is unramified}\label{sec:unram_cal}
In the case that $K_v/k_v$ is unramified and $\tilde{\xi}_v$ is non-trivial, we compute the integral of $\tilde{\xi}_v$ using Langlands-Shelstad fundamental lemma for Lie algebra (see \cite{Ngo}).
We begin by gathering the necessary preliminaries and stating the fundamental lemma for $\mathfrak{sl}_n$ in Section \ref{sec:endosln}.
We then provide the conclusion in Section \ref{subsec:conclusion_sln}.
Throughout this section, we fix $v\in \Omega_k\setminus \infty_k$ such that $K_v/k_v$ is an unramified field extension.

\subsubsection{The Langlands-Shelstad fundamental lemma for $\mathfrak{sl}_n$}\label{sec:endosln}
The Langlands-Shelstad fundamental lemma for Lie algebra, stated in \cite[Theorem 1]{Ngo}, represents the equation between two orbital integrals encoded by an endoscopic data.

In this section, we first introduce the endoscopic data in terms of $(\kappa, \sigma_H)$, as stated in \cite[Section 7.1]{Kot84}, rather than the endoscopic triple given in \cite[Section 3.3]{Kal} and \cite[Section 4.1.2]{Lee25}.
In particular, we present a slightly modified version of that given in \cite[Section 3]{Tom}.
Then we show that the evaluation $\tilde{\xi}_v$ on $X(\mathcal{O}_{k_v})$ for $\xi\in \Br X/\Br k$ corresponds to an endoscopic data for $\mathrm{SL}_n$, and state the fundamental lemma for $\mathfrak{sl}_n$ for the corresponding endoscopic data.

\begin{definition}[{\cite[Section 2]{Tom}}]\label{def:unram_reductive}
Let $G$ be a reductive group over $k_v$. If $G$ is a quasi-split group which splits over an unramified extension over $k_v$, then $G$ is said to be an unramified reductive group.
\end{definition}
An unramified reductive group $G$ is classified by the following root data
\[
(X^{*}(T), X_*(T), \Phi_G,\Phi_G^{\vee},\sigma_G)
\]where
\[\left\{
\begin{array}{l}
     X^*(T)\textit{ is the character group of a Cartan subgroup $T$ of $G$};  \\
     X_*(T)\textit{ is the cocharacter group of a Cartan subgroup $T$ of $G$};\\
     \Phi_G \textit{ is the set of roots};\\
     \Phi_G^\vee \textit{ is the set of coroots};\\
     \sigma_G\textit{ is
     an automorphism of finite order of $X^*(T)$ sending a set of simple roots in $\Phi_G$ to itself.}
\end{array}\right.
\]

In general, $\sigma_G$ is induced from the Frobenius automorphism of $\mathrm{Gal}(k_v^{un}/k_v)$ on the maximally split Cartan subgroup in $G$.
Here, $k_v^{un}$ is the maximal unramified extension over $k_v$, in a fixed algebraically closure $\overline{k}_v$ of $k_v$ containing $\overline{k}$.

\begin{definition}[{\cite[Section 3]{Tom}}]\label{def:endo}
    Let $G$ be an unramified reductive group over $k_v$ with the root data $(X^{*}(T), X_*(T), \Phi_G,\Phi_G^{\vee},\sigma_G)$.
    $H$ is an endoscopic group of $G$ if it is an unramified reductive group over $k_v$ whose classifying data, in Definition \ref{def:unram_reductive}, is of the form
    \[
         (X^{*}(T), X_*(T), \Phi_H,\Phi_H^{\vee},\sigma_H),
    \]
    subject to the constraints that there exists an element $\kappa \in \mathrm{Hom}(X_*(T),\mathbb{C}^\times)$ and a Weyl group element $w \in W(\Phi_G)$ such that 
    $\sigma_H=w \circ \sigma_G$, $\sigma_H(\kappa)=\kappa$, and $\Phi_H^\vee=\{\alpha\in \Phi_G^\vee\mid \kappa(\alpha)=1\}$.
\end{definition}

Now, we return to our context with $G=\mathrm{SL}_{n,k_v}$ and $T=\mathrm{S}_{x_0,k_v}$.
The following two lemmas explain that the evaluation $\tilde{\xi}_v$ assigns an endoscopic data of $\mathrm{SL}_{n,k_v}$ and present the associated endoscopic group.
\begin{lemma}\label{lem:charstofevalxi}
 The following morphism $\kappa_{\tilde{\xi}_v}$ is a character of $\mathrm{H}^1(k_v,\mathrm{S}_{x_0,k_v})$,
       \[
       \kappa_{\tilde{\xi}_v}:\mathrm{H}^1(k_v,\mathrm{S}_{x_0,k_v})\rightarrow \mathbb{C}^\times
       ,\ z_{x_0'}\mapsto \tilde{\xi}_v(x_0')
       \] 
       where $z_{x_0'}\in \mathrm{H}^1(k_v,\mathrm{S}_{x_0,k_v})$ for $x_0' \in \mathcal{R}_{x_0,v}$ is in Definition \ref{def:slnorbits}.
\end{lemma}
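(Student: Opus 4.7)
The plan is to verify the two requirements for $\kappa_{\tilde{\xi}_v}$ to be a character: well-definedness of the assignment $z_{x_0'} \mapsto \tilde{\xi}_v(x_0')$, and multiplicativity.

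First I would establish well-definedness. Via Proposition \ref{prop:number_orbit}, the assignment $x_0' \mapsto z_{x_0'}$ gives a bijection between $\mathrm{SL}_n(k_v)$-orbits in $X(k_v)$ and elements of $\mathrm{H}^1(k_v,\mathrm{S}_{x_0,k_v})$, so it suffices to know that $\tilde{\xi}_v$ is constant on each $\mathrm{SL}_n(k_v)$-orbit. In the nontrivial case (where $K/k$ is Galois, $K_v/k_v$ is a field extension, and $\xi$ is nontrivial in $\Br X/\Br k$) this is exactly Corollary \ref{cor:comparerationalstable}; in the remaining cases $\tilde{\xi}_v \equiv 1$ by Proposition \ref{prop:not_galois}, Proposition \ref{prop:evaluation}(2), or Remark \ref{rem:normalized_eval}, and so the claim is automatic.

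For multiplicativity, the plan is to factor $\kappa_{\tilde{\xi}_v}$ through a chain of explicit group homomorphisms. In the trivial cases listed above, $\kappa_{\tilde{\xi}_v}$ is the constant map $1$, which is a character. In the substantive case, Remark \ref{rmk:h1gal} furnishes an isomorphism $\mathrm{H}^1(k_v,\mathrm{S}_{x_0,k_v}) \cong k_v^\times / \Nm_{K_v/k_v} K_v^\times$ arising from the connecting homomorphism $\delta_1$ associated to the sequence (\ref{eq:ST}), and the argument used inside the proof of Proposition \ref{prop:evaluation}(1) shows that the connecting map $X(k_v) \to \mathrm{H}^1(k_v,\mathrm{S}_{x_0,k_v})$ is identified with $x \mapsto \det g_x \bmod \Nm_{K_v/k_v} K_v^\times$, where $g_x \in \mathrm{GL}_n(k_v)$ satisfies $g_x^{-1} x_0 g_x = x$. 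Combining this identification with the formula for $\tilde{\xi}_v$ of Proposition \ref{prop:evaluation}(1), I would exhibit $\kappa_{\tilde{\xi}_v}$ as the composite
\[
\mathrm{H}^1(k_v,\mathrm{S}_{x_0,k_v}) \xrightarrow{\sim} k_v^\times / \Nm_{K_v/k_v} K_v^\times \xrightarrow{\phi_{K_v/k_v}} \Gal(K_v/k_v) \xrightarrow{\Psi(\xi)} \Q/\Z \xrightarrow{\exp(2\pi i \,\cdot\,)} \C^\times,
\]
each arrow being a group homomorphism: the first two by local class field theory, the third by Proposition \ref{prop:isomorphism_Br_hom}, and the last by inspection. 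Multiplicativity of $\kappa_{\tilde{\xi}_v}$ is then immediate.

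The step I expect to need the most care is the compatibility of the bijection of Proposition \ref{prop:number_orbit} with the isomorphism of Remark \ref{rmk:h1gal}, namely the identification $z_{x_0'} \leftrightarrow \det g \bmod \Nm_{K_v/k_v} K_v^\times$ whenever $x_0' = g^{-1} x_0 g$ with $g \in \mathrm{GL}_n(k_v)$. Once this bookkeeping is in place, the multiplicativity reduces to the observation that if $z_{x_0'}$ and $z_{x_0''}$ correspond to $\det g_{x_0'}$ and $\det g_{x_0''}$ modulo norms, then $z_{x_0'} \cdot z_{x_0''}$ corresponds to $\det(g_{x_0'} g_{x_0''})$, which is realized by the element $(g_{x_0'} g_{x_0''})^{-1} x_0 (g_{x_0'} g_{x_0''})$ in its $\mathrm{SL}_n(k_v)$-orbit representative in $\mathcal{R}_{x_0,v}$, and the rest of the composite above is manifestly multiplicative in this argument.
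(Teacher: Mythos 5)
Your proof is correct and takes essentially the same approach as the paper: well-definedness follows from Proposition \ref{prop:number_orbit} together with Corollary \ref{cor:comparerationalstable}, and multiplicativity comes from the explicit formula of Proposition \ref{prop:evaluation}(1) combined with the surjectivity of the connecting map $X(k_v) \to \mathrm{H}^1(k_v,\mathrm{S}_{x_0,k_v})$ and its compatibility with $x \mapsto \det g_x \bmod \Nm_{K_v/k_v} K_v^\times$ (which the paper invokes via \cite[p.\ 388]{WX} in the proof of Proposition \ref{prop:evaluation}). You are merely a bit more explicit in unwinding the chain $\mathrm{H}^1(k_v,\mathrm{S}_{x_0,k_v}) \cong k_v^\times/\Nm_{K_v/k_v}K_v^\times \to \Gal(K_v/k_v) \to \Q/\Z \to \C^\times$ and in noting the trivial cases, which is harmless but not needed in context since the lemma sits inside Section \ref{sec:unram_cal} where $\tilde{\xi}_v$ is already assumed nontrivial and $K_v/k_v$ a field extension.
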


\begin{proof}

    By Proposition \ref{prop:number_orbit}
    and Corollary \ref{cor:comparerationalstable},
    the morphism $\kappa_{\tilde{\xi}_v}$ is well-defined and independent of the choice of $\mathcal{R}_{x_0,v}$.
    By Proposition \ref{prop:evaluation}, we have
    \[\tilde{\xi}_v(x_0') = \exp 2 \pi i\Psi(\xi) (\phi_{K_v/k_v}(\det g_{x_0'})),\]
    where $g_{x_0'}\in \mathrm{GL}_n(k_v)$ such that $g_{x_0'}^{-1}x_0 g_{x_0'}=x_0'$.
    Since the morphisms $\Psi(\xi)$ and $\phi_{K_v/k_v}$ are group homomorphisms, $\tilde{\xi}_v$ is also a group homomorphism.
    For the connecting homomorphism $\delta$ in the exact sequence (\ref{eq:long_exact_sequence_for_sln}), we have the following commutative diagram
    \[
    \begin{tikzcd}
X(k_v) \arrow[rr, "\tilde{\xi}_v"] \arrow[rd, "\delta", two heads] &  & \mathbb{C}^\times \\
   & {\mathrm{H}^1(k_v,\mathrm{S}_{x_0,k_v}).} \arrow[ru, "\kappa_{\tilde{\xi}_v}"] &    
\end{tikzcd}
    \]Here, $\delta$ is a surjective group homomorphism, and hence $\kappa_{\tilde{\xi}_v}$ is a character of $\mathrm{H}^1(k_v,\mathrm{S}_{x_0,k_v})$.
\end{proof}

By the Tate-Nakayama isomorphism (cf. \cite[Theorem 6.5.1]{Lab}) for $\mathrm{S}_{x_0,k_v}$, the character $\kappa$ of $\mathrm{H}^1(k_v,\mathrm{S}_{x_0,k_v})$ induces an element in $\mathrm{Hom}(X_*(\mathrm{S}_{x_0,k_v}),\mathbb{C}^\times)$. 
    Since $\mathrm{S}_{x_0,k_v}\cong \mathrm{R}_{K_v/k_v}^{(1)}(\mathbb{G}_{m,K_v})$ by Proposition \ref{prop:homogeneous}.(2), we have
    \[
     \mathrm{S}_{x_0,K_v}\cong \mathbb{G}_{m,K_v}^{n-1}\textit{ and }X_*(\mathrm{S}_{x_0,k_v})\cong (\mathbb{Z}^n)_{\Sigma=0}
    \]
    where $(\mathbb{Z}^n)_{\Sigma=0}=\{(z_1,\cdots,z_n)\in\mathbb{Z}^n\mid \sum_i z_i=0\}$. 
    Here, the action of $\Lambda_v:=\mathrm{Gal}(K_v/k_v)\cong \mathbb{Z}/n\mathbb{Z}$ on $X_*(\mathrm{S}_{x_0,k_v})$, induced from that on $\mathrm{S}_{x_0,K_v}$, is described by cyclic permutations.
    On the other hand, by \cite[Proposition 6.5.2]{Lab}, we have the following isomorphism
    \[
    \mathrm{ker}(N_{\Lambda_v})/I_{\Lambda_v}X_*(\mathrm{S}_{x_0,k_v})\cong \mathrm{H}^1(k_v,\mathrm{S}_{x_0,k_v})
    \]where 
    \[\left\{
    \begin{array}{l}
        N_{\Lambda_v}: X_*(\mathrm{S}_{x_0,k_v})\rightarrow X_*(\mathrm{S}_{x_0,k_v}), \ x\mapsto \sum_{\sigma \in \Lambda_v}\sigma(x) ;\\
        I_{\Lambda_v}:=\{\tau=\sum_{\sigma\in\Lambda_v}n_\sigma \sigma\mid \sum_{\sigma\in \Lambda_v}n_\sigma =0\}.
    \end{array}
    \right.
    \]
    Since $N_{\Lambda_v}$ is a trivial morphism in $X_*(\mathrm{S}_{x_0,k_v})\cong (\mathbb{Z}^n)_{\Sigma=0}$,
    the following composition is surjective
    \begin{equation}\label{surfection}
  X_*(\mathrm{S}_{x_0,k_v})\rightarrow X_*(\mathrm{S}_{x_0,k_v})/I_{\Lambda_v}X_*(\mathrm{S}_{x_0,k_v}) \cong  \mathrm{H}^1(k_v,\mathrm{S}_{x_0,k_v}).
    \end{equation}
    Hence, the pullback of a character $\kappa$ on $\mathrm{H}^1(k_v, \mathrm{S}_{x_0,k_v})$ along this composite map defines an element in $\mathrm{Hom}(X_*(\mathrm{S}_{x_0,k_v}),\mathbb{C}^\times)$.
    We denote the induced map again by $\kappa$.

\begin{lemma}\label{lem:endo}
    $\mathrm{S}_{x_0,k_v}$ is an endoscopic group of $\mathrm{SL}_{n,k_v}$, associated with $\kappa_{\tilde{\xi}_v}\in \mathrm{Hom}(X_*(\mathrm{S}_{x_0,k_v}),\mathbb{C}^\times)$ (see Definition \ref{def:endo}).
\end{lemma}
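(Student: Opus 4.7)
The plan is to identify the classifying root data of $\mathrm{SL}_{n,k_v}$ and of $H := \mathrm{S}_{x_0,k_v}$ on a single lattice, and then to verify the two endoscopic conditions on $\kappa_{\tilde{\xi}_v}$ using the explicit form of the Tate--Nakayama surjection (\ref{surfection}) together with the primality of $n$. Fixing a split maximal torus $T$ of $\mathrm{SL}_{n,k_v}$, one has $X_*(T) \cong (\mathbb{Z}^n)_{\Sigma = 0}$ with $\sigma_{\mathrm{SL}_n}$ trivial and coroots $\Phi_{\mathrm{SL}_n}^\vee = \{e_i - e_j : i \neq j\}$; the torus $H \cong \mathrm{R}^{(1)}_{K_v/k_v}(\mathbb{G}_{m,K_v})$ shares the cocharacter lattice $(\mathbb{Z}^n)_{\Sigma = 0}$, on which a chosen generator $\sigma$ of $\Lambda_v \cong \mathbb{Z}/n\mathbb{Z}$ acts by the cyclic permutation $e_i \mapsto e_{i+1}$. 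Since this permutation lies in $W(\Phi_{\mathrm{SL}_n}) = S_n$, taking $w = \sigma$ gives $\sigma_H = w \circ \sigma_{\mathrm{SL}_n}$, realizing the first requirement of Definition \ref{def:endo}.

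The second step is to make $\kappa_{\tilde{\xi}_v}$ explicit on the lattice. Since $N_{\Lambda_v}$ vanishes on $(\mathbb{Z}^n)_{\Sigma = 0}$, the surjection (\ref{surfection}) identifies with the quotient onto $(\mathbb{Z}^n)_{\Sigma = 0} / (\sigma - 1)(\mathbb{Z}^n)_{\Sigma = 0}$, which a short computation (using that $\sum_i i(\sigma x)_i - \sum_i i x_i = -n x_n \equiv 0 \pmod{n}$ for $x \in X_*(H)$) realizes as $(x_1, \ldots, x_n) \mapsto \sum_{i=1}^n i x_i \pmod{n}$, a surjection onto $\mathbb{Z}/n\mathbb{Z}$. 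Hence $\kappa_{\tilde{\xi}_v}$ pulls back to a character of the form $x \mapsto \exp(2\pi i\, a \sum_i i x_i / n)$ for some $a \in \mathbb{Z}/n\mathbb{Z}$. The invariance $\sigma_H(\kappa_{\tilde{\xi}_v}) = \kappa_{\tilde{\xi}_v}$ is then automatic, because $\kappa_{\tilde{\xi}_v}$ factors through the $\Lambda_v$-coinvariants by construction.

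The last step is to verify $\Phi_H^\vee = \emptyset = \{\alpha^\vee \in \Phi_{\mathrm{SL}_n}^\vee : \kappa_{\tilde{\xi}_v}(\alpha^\vee) = 1\}$, which is equivalent to $\kappa_{\tilde{\xi}_v}$ being nontrivial on every coroot. By Proposition \ref{prop:isomorphism_Br_hom} and Corollary \ref{cor:nontrivial_eval}, the nontriviality of $\xi \in \Br X / \Br k$ forces $a \not\equiv 0 \pmod{n}$, whence $\gcd(a, n) = 1$ since $n$ is prime. For any coroot $\alpha^\vee = e_i - e_j$ with $i \neq j$, one then computes $\kappa_{\tilde{\xi}_v}(e_i - e_j) = \exp(2\pi i\, a(i - j)/n) \neq 1$ because $1 \leq |i - j| \leq n - 1 < n$. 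The main obstacle I anticipate is the bookkeeping of the Tate--Nakayama surjection as the explicit map $x \mapsto \sum_i i x_i$ and the transport of the cyclic $\Lambda_v$-action faithfully onto the coroots of $\mathrm{SL}_{n,k_v}$; once these identifications are pinned down, the primality of $n$ forces the final coroot check.
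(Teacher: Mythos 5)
Your proof is correct and follows essentially the same path as the paper's: take $w\in W(\Phi_{\mathrm{SL}_n})\cong S_n$ to be the cyclic permutation so $\sigma_H = w\circ\sigma_{\mathrm{SL}_n}$, observe $\sigma_H(\kappa_{\tilde{\xi}_v})=\kappa_{\tilde{\xi}_v}$ since $\kappa_{\tilde{\xi}_v}$ factors through the $\Lambda_v$-coinvariants, and show $\Phi_H^\vee=\emptyset$ by checking $\kappa_{\tilde{\xi}_v}$ is nontrivial on each coroot using primality of $n$. The only stylistic difference is in the last step: the paper writes $\epsilon_{i,j}=\sum_{k=0}^{j-i-1}\sigma^{i+k-1}(\epsilon_{1,2})$ and uses $\sigma$-invariance to get $\kappa_{\tilde{\xi}_v}(\epsilon_{i,j})=\zeta_n^{j-i}$, whereas you identify the Tate--Nakayama surjection explicitly as $x\mapsto\sum_i i x_i\bmod n$ --- a valid computation, though you should note explicitly that concluding this map \emph{is} the quotient $X_*/(\sigma-1)X_*$ (rather than merely a map factoring through it) uses $\#\mathrm{H}^1(k_v,\mathrm{S}_{x_0,k_v})=n$ from Remark \ref{rmk:h1gal}.
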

\begin{proof}
Let $\sigma_{\mathrm{S}_{x_0,k_v}}$ be the action on $X^*(\mathrm{S}_{x_0,k_v})$ induced from the Frobenius automorphism of $\mathrm{Gal}(K_v/k_v)$ on $\mathrm{S}_{x_0,K_v}$.
We claim that there exists an element $w$ of Weyl group for $\Phi_{\mathrm{SL}_{n,k_v}}$ such that $\sigma_{\mathrm{S}_{x_0,k_v}}=w \circ \sigma_{\mathrm{SL}_{n,k_v}}$
and $\sigma_{\mathrm{S}_{x_0,k_v}}(\kappa_{\tilde{\xi}_v})=\kappa_{\tilde{\xi}_v}$.
Since $\mathrm{SL}_{n,k_v}$ is split over $k_v$, we have $\sigma_{\mathrm{SL}_{n,k_v}}=id$. 
The Weyl group $W(\Phi_{\mathrm{SL}_{n,k_v}})$ is isomorphic to the symmetric group $S_n$ which acts on $X_*(\mathrm{S}_{x_0,k_v})\cong \mathbb{Z}^n_{\Sigma=0}$ by permuting the factors.
Thus we can find $w\in S_n$ such that $\sigma_{\mathrm{S}_{x_0,k_v}}=w\circ \sigma_{\mathrm{SL}_{n,k_v}}=w$ since $\sigma_{\mathrm{S}_{x_0,k_v}}$ acts on $X_*(\mathrm{S}_{x_0,k_v})\cong \mathbb{Z}^n_{\Sigma=0}$ by a cyclic permuation of the factors.
On the other hand, one has $id-\sigma_{\mathrm{S}_{x_0,k_v}} \in I_{\Lambda_v}$, so that $\sigma_{\mathrm{S}_{x_0,k_v}}(\kappa_{\tilde{\xi}_v})=\kappa_{\tilde{\xi}_v}$.

We now verify that 
\begin{equation}\label{eq:rootdataforH}
(X^*(\mathrm{S}_{x_0,k_v}),X_*(\mathrm{S}_{x_0,k_v}),\Phi_H,\Phi_H^\vee,\sigma_{\mathrm{S}_{x_0,k_v}})
\end{equation}
is the root data classifying an unramified reductive group $\mathrm{S}_{x_0,k_v}$, where $\Phi^\vee_H=\{\alpha \in \Phi_{\mathrm{SL}_{n,k_v}}^\vee \mid \kappa_{\tilde{\xi}_v}(\alpha)=1  \}$ and $\Phi_H$ is its dual set.
We claim that $\Phi_H^\vee$ is empty. 
Once this is established, it follows that $\Phi_H$ is also empty. Hence, the root datum (\ref{eq:rootdataforH}) corresponds to $\mathrm{S}_{x_0,k_v}$.

We now prove the claim.
Let $\epsilon_{i,j}$ be an element $(
\{\underbrace{0,\cdots,0,1}_{i},  \underbrace{0,\cdots,0,-1}_{j-i}, \underbrace{0,\cdots,0}_{n-j})$ in $\mathbb{Z}_{\Sigma=0}\cong X_*(\mathrm{S}_{x_0,k_v})$.
Note that $\epsilon_{1,2}$ maps to a non-trivial element in $X_*(\mathrm{S}_{x_0,k_v})/I_{\Lambda_v}X_*(\mathrm{S}_{x_0,k_v})\cong \mathrm{H}^1(k_v,\mathrm{S}_{x_0,k_v})$ under the surjection (\ref{surfection}).
Since $\mathrm{H}^1(k_v,\mathrm{S}_{x_0,k_v})\cong \mathrm{Gal}(K_v/k_v)\cong \mathbb{Z}/n\mathbb{Z}$ by Remark \ref{rmk:h1gal} and
$n$ is prime, $\kappa_{\tilde{\xi}_v}$ for a non-trivial $\xi_v$ maps $\epsilon_{1,2}$ to a non-trivial $n$-th root of unity $\zeta_n$ in $\mathbb{C}^\times$ (see Corollary \ref{cor:nontrivial_eval}).
    For any coroot $\pm\epsilon_{i,j}\in \Phi_{\mathrm{SL}_{n,k_v}}^\vee$ where $1\leq i<j\leq n$, we then have
    \begin{equation}\label{eq:kappaoperate}
    \kappa_{\tilde{\xi}_v}(\pm\epsilon_{i,j})=
    \kappa_{\tilde{\xi}_v}(\pm\sum_{k=0}^{j-i-1}\sigma^{i+k-1}(\epsilon_{1,2}))
    =\prod_{k=0}^{j-i-1}
    \kappa_{\tilde{\xi}_v}(\pm\sigma^{i+k-1}(\epsilon_{1,2}))
    =\zeta_n^{\pm(j-i)},    \end{equation}where $\sigma \in \Lambda_v$ corresponds to the permuation $(1\ 2\ \cdots\ n)$.
    Here, the last equality in (\ref{eq:kappaoperate}) follows from the fact that $id-\sigma^{i+k-1} \in I_{\Lambda_v}$.
    Since $1\leq j-i< n$, we then have
    \[
    \Phi^\vee_H=\{\alpha \in \Phi_{\mathrm{SL}_{n,k_v}}^\vee \mid \kappa_{\tilde{\xi}_v}(\alpha)=1  \}=\emptyset.
    \]
\end{proof}

\begin{lemma}[{\cite[Theorem 1]{Ngo}}]\label{lem:fundamentallemma}
Let $x_0$ be an element of $\mathfrak{sl}_n(\mathcal{O}_{k_v})$ or $\mathrm{SL}_n(\mathcal{O}_{k_v})$.
In the Lie algebra case, i.e. when $x_0 \in \mathfrak{sl}_n(\mathcal{O}_{k_v})$, we assume that $\mathrm{char}(\kappa_v) > n$.
Then we have
\begin{equation}\label{eq:SLnkappaorbitalintegral2}
\sum_{x_0'\in\mathcal{R}_{x_0,v}}
 \kappa(z_{x_0'}) \int_{\mathrm{S}_{x_0'}(k_v)\backslash \mathrm{SL}_{n}(k_v)}\mathbbm{1}_{\mathfrak{gl}_{n}(\mathcal{O}_{k_v})}(g^{-1}x_0'g)
   \ \frac{dh_v}{ds'_v}
=
|\Delta_\chi|_v^{-1/2},
\end{equation}
where $\mathrm{S}_{x_0'}$ denotes the centralizer of $x_0'$ under $\mathrm{SL}_n$-conjugation for each $x_0'\in \mathcal{R}_{x_0,v}$ and
\[
\left\{
\begin{array}{l}
\text{$dh_v$ denotes the Haar measure on $\mathrm{SL}_n(k_v)$  such that $vol(dh_v,\mathrm{SL}_n(\mathcal{O}_{k_v}))=1$};\\
\text{$ds_v'$ denotes the Haar measure on $\mathrm{S}_{x_0'}(k_v)$ such that 
$vol(ds_v',S_c')=1$} .
\end{array}
\right.
\]
Here, $S_c'$ denotes the maximal compact subgroup of $\mathrm{S}_{x_0'}(k_v)$.
\end{lemma}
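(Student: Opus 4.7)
The plan is to recognize the left-hand side as a $\kappa$-orbital integral on $\mathrm{SL}_n$ for the character $\kappa = \kappa_{\tilde{\xi}_v}$, and then invoke Ngô's proof of the Langlands--Shelstad fundamental lemma for the endoscopic datum $(\kappa_{\tilde{\xi}_v},\sigma_{\mathrm{S}_{x_0,k_v}})$ already identified in Lemmas \ref{lem:charstofevalxi}--\ref{lem:endo}. The hypothesis $\mathrm{char}(\kappa_v) > n$ in the Lie algebra case is exactly the hypothesis required for \cite[Theorem 1]{Ngo}.

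First, I would recast the left-hand side in the standard form of a $\kappa$-orbital integral. By Proposition \ref{prop:number_orbit} and Remark \ref{rmk:orbitforsln}, the representatives $x_0' \in \mathcal{R}_{x_0,v}$ are in bijection with $\mathrm{H}^1(k_v,\mathrm{S}_{x_0,k_v})$ via the cocycles $z_{x_0'}$, and the integrals appearing in the sum are precisely the orbital integrals $O_{x_0'}(\mathbbm{1}_{\mathfrak{gl}_n(\mathcal{O}_{k_v})})$ of the $\mathrm{SL}_n(k_v)$-orbit of $x_0'$ with respect to the quotient measure $dh_v/ds'_v$. Since $\kappa_{\tilde{\xi}_v}$ is a genuine character of $\mathrm{H}^1(k_v,\mathrm{S}_{x_0,k_v})$ by Lemma \ref{lem:charstofevalxi}, the weighted sum is by definition the $\kappa$-orbital integral $O^\kappa_{x_0}(\mathbbm{1}_{\mathfrak{gl}_n(\mathcal{O}_{k_v})})$ in the sense of \cite{Ngo}.

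Second, I would apply Ngô's theorem to equate $O^\kappa_{x_0}(\mathbbm{1}_{\mathfrak{gl}_n(\mathcal{O}_{k_v})})$, up to the Langlands--Shelstad transfer factor, with a stable orbital integral on the endoscopic group $H = \mathrm{S}_{x_0,k_v}$. By Lemma \ref{lem:endo} the root datum of $H$ has $\Phi_H = \emptyset$, so $H$ is the torus $\mathrm{R}^{(1)}_{K_v/k_v}\mathbb{G}_{m,K_v}$ and the transferred element is $x_0$ itself, viewed in $\mathfrak{h}(\mathcal{O}_{k_v})$. On a torus the stable orbital integral of the characteristic function of the hyperspecial integral model at an integral element evaluates to $1$ with the normalization $\mathrm{vol}(ds'_v, S_c') = 1$, so the right-hand side of Ngô's equality collapses to the transfer factor.

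Third, I would compute this transfer factor. Since $H$ is an unramified maximal torus of $G = \mathrm{SL}_n$ arising as the centralizer of the regular semisimple element $x_0$, the Kottwitz--Langlands--Shelstad normalization reduces to the ratio of Weyl discriminants $|D^G(x_0)/D^H(x_0)|_v^{1/2}$. Since $H$ is abelian, $D^H(x_0) = 1$, while $D^G(x_0)$ is, up to a unit, the polynomial discriminant $\Delta_\chi$; hence the transfer factor equals $|\Delta_\chi|_v^{1/2}$, and Ngô's identity rearranges to $O^\kappa_{x_0}(\mathbbm{1}_{\mathfrak{gl}_n(\mathcal{O}_{k_v})}) = |\Delta_\chi|_v^{-1/2}$.

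The main obstacle I expect is bookkeeping: matching the measure normalizations $dh_v$, $ds'_v$ of the statement with Ngô's convention in which hyperspecial maximal compacts of both $G$ and $H$ have volume $1$, and verifying that in the unramified, regular semisimple setting of a torus endoscopic group the transfer factor really collapses to the simple discriminant ratio above, with no leftover sign or complex phase. Once those normalizations are checked, the identity follows directly from \cite[Theorem 1]{Ngo}.
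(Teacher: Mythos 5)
Your approach is essentially the paper's: recast the left-hand side as a $\kappa$-orbital integral, invoke Ng\^o's Theorem 1 with $G = \mathrm{SL}_{n,k_v}$ and $H = \mathrm{S}_{x_0,k_v}$ (using the endoscopic datum from Lemmas \ref{lem:charstofevalxi}--\ref{lem:endo}), observe that the stable orbital integral on the torus is $1$, and reduce to the discriminant ratio $|\mathcal{D}_H/\mathcal{D}_G|_v^{1/2} = |\Delta_\chi|_v^{-1/2}$. The paper cites \cite[(3.3) and Example 3.6]{Gor22} for the identity $|\mathcal{D}_{\mathrm{SL}_n}(x_0)|_v = |\Delta_\chi|_v$ and \cite[Lemma 1.4.3, Lemma 1.9.2]{Ngo} to see that the transferred element $x_{0,H}$ lies in $\mathfrak{s}_{x_0}(\mathcal{O}_{k_v})$ with centralizer all of $\mathrm{S}_{x_0,k_v}$, so your informal "the transferred element is $x_0$ itself'' should be pinned to those references.

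There are two concrete gaps. First, the statement allows $x_0 \in \mathrm{SL}_n(\mathcal{O}_{k_v})$ as well as $x_0 \in \mathfrak{sl}_n(\mathcal{O}_{k_v})$, and your argument only treats the Lie algebra case; the paper handles the group case by the same computation via the group-theoretic fundamental lemma as recalled in \cite[Remark 4.6]{Lee25}, without needing $\mathrm{char}(\kappa_v) > n$. Second, Ng\^o's theorem is formulated with the test function $\mathbbm{1}_{\mathfrak{sl}_n(\mathcal{O}_{k_v})}$, whereas the lemma asserts the identity with $\mathbbm{1}_{\mathfrak{gl}_n(\mathcal{O}_{k_v})}$; this substitution is legitimate because $g^{-1}x_0'g$ already lies in $\mathfrak{sl}_n(k_v)$ for $g \in \mathrm{SL}_n(k_v)$, so the two characteristic functions agree on the relevant orbit, but you do not say so. Your proposal is otherwise sound and matches the paper's argument; it just needs these two points filled in.
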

\begin{proof}
By Lemma \ref{lem:endo}, $\mathrm{S}_{x_0,k_v}$ is the endoscopic group associated with $\kappa_{\tilde{\xi}_v}$.
We first consider the Lie algebra case.
Applying \cite[Theorem 1]{Ngo} with $G = \mathrm{SL}_{n,k_v}$, $H = \mathrm{S}_{x_0,k_v}$, and $\kappa = \kappa_{\tilde{\xi}_v}$,
for $x_0\in\mathfrak{sl}_n(\mathcal{O}_{k_v})$, we obtain
\begin{equation}\label{eq:bysln}
\sum_{x_0'\in\mathcal{R}_{x_0,v}}
 \kappa(z_{x_0'}) \int_{\mathrm{S}_{x_0'}(k_v)\backslash \mathrm{SL}_{n}(k_v)}\mathbbm{1}_{\mathfrak{sl}_{n}(\mathcal{O}_{k_v})}(g^{-1}x_0'g)
   \ \frac{dh_v}{ds'_v}
=
\frac{|\mathcal{D}_{\mathrm{S}_{x_0,k_v}}(x_{0,H})|_v^{1/2}}{|\mathcal{D}_{\mathrm{SL}_{n,k_v}}(x_{0})|_v^{1/2}}
\mathcal{SO}_{x_{0,H}}(\mathbbm{1}_{\mathfrak{s}_{x_0}(\mathcal{O}_{k_v})}, ds_v)
\end{equation}when $\mathrm{char}(\kappa_v)>n$.
The explanation of the terms in the right hand side is as follows:
\begin{itemize}
\item $\mathcal{D}_{\mathrm{SL}_{n,k_v}}$ and $\mathcal{D}_{\mathrm{S}_{x_0,k_v}}$
are discriminant functions of $\mathrm{SL}_{n,k_v}$ and $\mathrm{S}_{x_0,k_v}$, respectively (see \cite[Section 1.10]{Ngo}).
Since $\mathrm{S}_{x_0,k_v}$ is torus, $\mathcal{D}_{\mathrm{S}_{x_0,k_v}}$ is trivial and so 
$|\mathcal{D}_{\mathrm{S}_{x_0,k_v}}(x_{0,H}))|_v=1$.
On the other hand, by \cite[(3.3) and Example 3.6]{Gor22}, we have
\[
|\mathcal{D}_{\mathrm{SL}_{n,k_v}}(x_0)|_v=|\Delta_\chi|_v.
\]
\item
$\mathfrak{s}_{x_0}$ denotes the Lie algebra of $\mathrm{S}_{x_0}$ and 
$x_{0,H}$ is an element of $\mathfrak{s}_{x_0}(k_v)$, whose stable conjugacy class transfers to that of $x_0$ via the map given in \cite[Section 1.9]{Ngo}.
Since $\mathrm{S}_{x_0,k_v}$ is unramified, $\mathfrak{s}_{x_0}(\mathcal{O}_{k_v})$ is well-defiend.

For the maximal compact subgroup $S_c$ of $\mathrm{S}_{x_0}(k_v)$, $ds_v$ denotes the Haar measure on $\mathrm{S}_{x_0}(k_v)$ such that $vol(ds_v,S_c)=1$.
By \cite[Lemma 1.4.3 and Lemma 1.9.2]{Ngo}, $x_{0,H}\in\mathfrak{s}_{x_0}(\mathcal{O}_{k_v})$ and its centralizer in $\mathrm{S}_{x_0,k_v}$ coincides with $\mathrm{S}_{x_0,k_v}$.
This implies that the action of $\mathrm{S}_{x_0}(\bar{k}_v)$ on $\mathfrak{s}_{x_0}(k_v)$ by conjugation is trivial.
Therefore the stable orbital integral $\mathcal{SO}_{x_{0,H}}(\mathbbm{1}_{\mathfrak{s}_{x_0}(\mathcal{O}_{k_v})}, ds_v)$ (see \cite[Definition 4.1.(2)]{Lee25}) is given as follows
\[
\mathcal{SO}_{x_{0,H}}(\mathbbm{1}_{\mathfrak{s}_{x_0}(\mathcal{O}_{k_v})}, ds_v)
=\mathbbm{1}_{\mathfrak{s}_{x_0}(\mathcal{O}_{k_v})}(x_{0,H})=1.
\]
\end{itemize}
Finally, since $g^{-1}x_0'g$ lies in $\mathfrak{sl}_n(k_v)$ for $g\in\mathrm{SL}_n(k_v)$ and $x_0' \in \mathcal{R}_{x_0,v}$, we may replace the test function $\mathbbm{1}_{\mathfrak{sl}_n(\mathcal{O}_{k_v})}$ with $\mathbbm{1}_{\mathfrak{gl}_n(\mathcal{O}_{k_v})}$.

On the other hand, using the formula in \cite[Remark 4.6]{Lee25}, the same argument and computation as in the Lie algebra case apply to the group case, when $x_0\in \mathrm{SL}_n(\mathcal{O}_{k_v})$, as well. 
This yields the desired conclusion.
\end{proof}
\subsubsection{Computation}\label{subsec:conclusion_sln}
To use Langlands–Shelstad fundamental lemma, we need to compare the measure $\frac{dh_v}{ds'_v}$ in Lemma \ref{lem:fundamentallemma} and $d\mu_v$ on each $\mathrm{SL}_n(k_v)$-orbits $x_0'\cdot\mathrm{SL}_n(k_v)$ in $X(k_v)$, where $x_0'\in \mathcal{R}_{x_0,v}$.
The following lemma shows that these two measures coincide.
\begin{lemma}[{\cite[Lemma 6.6]{Lee25}}]\label{lem:measure_comparison_gln_sln}
    For $x_0'\in \mathcal{R}_{x_0,v}$, on $x_0'\cdot \mathrm{SL}_n(k_v)\cong \mathrm{S}_{x_0'}(k_v)\backslash \mathrm{SL}_n(k_v)$, we have
    \[
    \left.\mu_v\right|_{x_0'\cdot \mathrm{SL}_n(k_v)}=\frac{dh_v}{ds_v'}.
    \]
\end{lemma}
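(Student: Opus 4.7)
The plan is to invoke uniqueness of invariant measures and normalize by computing a common volume. Both $\left.\mu_v\right|_{x_0'\cdot \mathrm{SL}_n(k_v)}$ and $dh_v/ds_v'$ are right $\mathrm{SL}_n(k_v)$-invariant positive Borel measures on the homogeneous space $\mathrm{S}_{x_0'}(k_v)\backslash \mathrm{SL}_n(k_v)$: the former by restricting the $\mathrm{GL}_n(k_v)$-invariance of $d\mu_v$ to the $\mathrm{SL}_n(k_v)$-orbit, the latter by construction. Since $\mathrm{SL}_n(k_v)$ acts transitively on the orbit with stabilizer $\mathrm{S}_{x_0'}(k_v)$, uniqueness forces $\left.\mu_v\right|_{x_0'\cdot \mathrm{SL}_n(k_v)} = c\cdot dh_v/ds_v'$ for some constant $c>0$, and the whole substance of the lemma is that $c=1$.

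To pin down $c$, I would fix $g_{x_0'}\in \mathrm{GL}_n(k_v)$ with $g_{x_0'}^{-1}x_0g_{x_0'}=x_0'$, so that the natural identification
\[
\bar\beta:\mathrm{S}_{x_0'}(k_v)\backslash \mathrm{SL}_n(k_v)\xrightarrow{\sim} x_0'\cdot \mathrm{SL}_n(k_v),\quad \mathrm{S}_{x_0'}(k_v)h\mapsto \mathrm{T}_{x_0}(k_v) g_{x_0'}h,
\]
is obtained from left translation by $g_{x_0'}$ on $\mathrm{GL}_n(k_v)$ followed by the projection. I would then evaluate both measures at $\bar\beta$ of the image of $\mathrm{SL}_n(\mathcal{O}_{k_v})$ in $\mathrm{S}_{x_0'}(k_v)\backslash \mathrm{SL}_n(k_v)$. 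On the $\mathrm{SL}_n$ side, the unfolding formula yields $vol(dh_v,\mathrm{SL}_n(\mathcal{O}_{k_v}))/vol(ds_v',\mathrm{S}_{x_0'}(k_v)\cap \mathrm{SL}_n(\mathcal{O}_{k_v}))$, which equals $1$ once $\mathrm{S}_{x_0'}(k_v)\cap \mathrm{SL}_n(\mathcal{O}_{k_v})$ is identified with the maximal compact $S_c'$. On the $\mathrm{GL}_n$ side, the unfolding formula for $dg_v/dt_v$ together with the left invariance of $dg_v$ and the Iwasawa-type identity $\mathrm{GL}_n(\mathcal{O}_{k_v}) = \mathrm{T}_{x_0}(\mathcal{O}_{k_v})\cdot \mathrm{SL}_n(\mathcal{O}_{k_v})$ gives $vol(dg_v,\mathrm{GL}_n(\mathcal{O}_{k_v}))/vol(dt_v,\mathrm{T}_{x_0}(\mathcal{O}_{k_v})) = 1$.

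The main obstacle is the bookkeeping of the maximal compacts, specific to the unramified setting of Section \ref{sec:unram_cal}. The key inputs are: (i) $\mathrm{T}_{x_0}(\mathcal{O}_{k_v})\cong \mathcal{O}_{K_v}^\times$ is the maximal compact of $\mathrm{T}_{x_0}(k_v)\cong K_v^\times$; (ii) the norm-one torus $\mathrm{S}_{x_0}(k_v)$ is itself compact and coincides with $\mathrm{S}_{x_0}(\mathcal{O}_{k_v})$, since every norm-one element of $K_v^\times$ is automatically a unit when $K_v/k_v$ is unramified; and (iii) the Iwasawa identity $\mathrm{GL}_n(\mathcal{O}_{k_v})=\mathrm{T}_{x_0}(\mathcal{O}_{k_v})\cdot \mathrm{SL}_n(\mathcal{O}_{k_v})$ with intersection $\mathrm{S}_{x_0}(\mathcal{O}_{k_v})$, which follows from the surjectivity of $\Nm_{K_v/k_v}:\mathcal{O}_{K_v}^\times\to \mathcal{O}_{k_v}^\times$ in the unramified case applied to the commutative square relating $\det$ and $\Nm$ on integral points. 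Granted these identifications, both volumes equal $1$ and force $c=1$.
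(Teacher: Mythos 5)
The paper does not actually prove this lemma---it cites \cite[Lemma 6.6]{Lee25}---so there is no internal proof to compare with; I will judge your argument on its own merits. Your framework (uniqueness of the right-$\mathrm{SL}_n(k_v)$-invariant measure on the orbit, then pin down the proportionality constant $c$ by a volume computation) is the right one, but the ``key inputs'' (i)--(iii) are false in the generality the paper needs, and they are exactly where the proof is supposed to do its work. The error is the identification $\mathrm{T}_{x_0}(\mathcal{O}_{k_v})\cong\mathcal{O}_{K_v}^\times$: in fact $\mathrm{T}_{x_0}(\mathcal{O}_{k_v})=\mathrm{T}_{x_0}(k_v)\cap\mathrm{GL}_n(\mathcal{O}_{k_v})=R^\times$, where $R$ is the centralizer of $x_0$ in $\mathrm{M}_n(\mathcal{O}_{k_v})$, an $\mathcal{O}_{k_v}$-order in $K_v$ that need not be maximal (for the companion matrix, $R=\mathcal{O}_{k_v}[x]/(\chi(x))$, which is strictly smaller than $\mathcal{O}_{K_v}$ whenever $S_v(\chi)>0$---a case the theorem explicitly allows). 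The measure $dt_v$, however, is normalized on the maximal compact $T_c=\mathcal{O}_{K_v}^\times$, not on $R^\times$, so (i) fails. Claim (ii) fails similarly: $\mathrm{S}_{x_0}(\mathcal{O}_{k_v})=\mathrm{S}_{x_0}(k_v)\cap R^\times$ can be a proper open subgroup of $\mathrm{S}_{x_0}(k_v)$; your observation that norm-one elements of $K_v^\times$ are units of $\mathcal{O}_{K_v}$ does not put them in $R^\times$. And (iii) fails: the ``Iwasawa'' factorization $\mathrm{GL}_n(\mathcal{O}_{k_v})=R^\times\cdot\mathrm{SL}_n(\mathcal{O}_{k_v})$ requires $\Nm_{K_v/k_v}(R^\times)=\mathcal{O}_{k_v}^\times$, which can fail for non-maximal $R$; when it fails, the image of $\mathrm{SL}_n(\mathcal{O}_{k_v})$ in $\mathrm{T}_{x_0}(k_v)\backslash\mathrm{GL}_n(k_v)$ is a proper subset of the image of $\mathrm{GL}_n(\mathcal{O}_{k_v})$, and your two volume computations compute volumes of different sets.

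The lemma is nonetheless correct, because the defects cancel; here is how to repair the calculation (it suffices to treat $x_0'=x_0$, the general case following by right-translation by $g_{x_0'}$, which is $\mu_v$-preserving and intertwines the identifications). Write $U_2$ and $U_1$ for the images of $\mathrm{GL}_n(\mathcal{O}_{k_v})$ and $\mathrm{SL}_n(\mathcal{O}_{k_v})$ in $\mathrm{T}_{x_0}(k_v)\backslash\mathrm{GL}_n(k_v)$; since $K_v/k_v$ is unramified, $\det\mathrm{GL}_n(\mathcal{O}_{k_v})=\mathcal{O}_{k_v}^\times\subset\Nm(K_v^\times)$, so both lie in the single $\mathrm{SL}_n(k_v)$-orbit of $x_0$. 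Unfolding $dg_v/dt_v$ against $\mathbbm{1}_{\mathrm{GL}_n(\mathcal{O}_{k_v})}$ gives $\mu_v(U_2)=[\mathcal{O}_{K_v}^\times:R^\times]$; the sets $U_1\cdot\diag(u,1,\dots,1)$, as $u$ runs over $\mathcal{O}_{k_v}^\times/\Nm(R^\times)$, partition $U_2$, so $\mu_v(U_1)=[\mathcal{O}_{K_v}^\times:R^\times]/[\mathcal{O}_{k_v}^\times:\Nm(R^\times)]$; unfolding $dh_v/ds_v'$ against $\mathbbm{1}_{\mathrm{SL}_n(\mathcal{O}_{k_v})}$ gives $(dh_v/ds_v')(U_1)=[\mathrm{S}_{x_0}(k_v):\mathrm{S}_{x_0}(k_v)\cap R^\times]$. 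Since $\Nm:\mathcal{O}_{K_v}^\times\to\mathcal{O}_{k_v}^\times$ is surjective with kernel $\mathrm{S}_{x_0}(k_v)$ (unramifiedness is used here and cannot be dropped), one has $[\mathcal{O}_{K_v}^\times:R^\times]=[\mathrm{S}_{x_0}(k_v):\mathrm{S}_{x_0}(k_v)\cap R^\times]\cdot[\mathcal{O}_{k_v}^\times:\Nm(R^\times)]$, whence $c=1$. Alternatively, one can sidestep $R$ entirely via Weil's compatibility of quotient measures for the pair of exact sequences $1\to\mathrm{SL}_n\to\mathrm{GL}_n\to\mathbb{G}_m\to1$ and $1\to\mathrm{S}_{x_0}\to\mathrm{T}_{x_0}\to\mathbb{G}_m\to1$, noting that both $dg_v/dh_v$ and $dt_v/ds_v$ restrict to the same Haar measure on $\Nm(K_v^\times)$ normalized on $\mathcal{O}_{k_v}^\times$.
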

We now introduce a lemma that will be used in the proof of Proposition \ref{prop:fundlemforsln}.
\begin{lemma}\label{lem:translation}
    For $c\in \mathcal{O}_{k}$, let $X_c$ be the closed subscheme of $\mathrm{M}_{n,\mathcal{O}_k}$ representing the set of $n\times n$ matrices whose characteristic polynomial is $\chi(x-c)$.
    \begin{enumerate}
        \item 
    Then, $X_c$ is a homogeneous space of $\mathrm{GL}_n$ (resp. $\mathrm{SL}_n$), in the sence of Section \ref{subsec:homogeneous} with a point $x_c:=x_0+cI_n$, and we have the following isomorphism
    \[
    X_c\cong \mathrm{T}_{x_c}\backslash \mathrm{GL_n} \ (resp.\ X_c\cong \mathrm{S}_{x_c}\backslash \mathrm{SL}_n),
    \]
    where $\mathrm{T}_{x_c}$ (resp. $\mathrm{S}_{x_c}$) is the centralizer of $x_c$ under the $\mathrm{GL}_n$-conjugation (resp. the $\mathrm{SL}_n$-conjugation).
    \item
    There is an isomorphism $\Br X\cong \Br X_c,\ \xi\mapsto \xi_c$ where $\xi_c\in \Br X_c$ such that $\tilde{\xi}_v(x)=\tilde{\xi}_{c,v}(x+cI_n)$ and
    the following equation holds
    \[
    \int_{X(\mathcal{O}_{k_v})}\tilde{\xi}_v(x)\ d\mu_v
    =
\int_{{X_c} (\mathcal{O}_{k_v})}\tilde{\xi}_{c,v}(x)\ d\mu_v.
    \]
    \end{enumerate}
\end{lemma}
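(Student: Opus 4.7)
The plan is to exploit the translation-by-$cI_n$ morphism $\tau_c : X \to X_c$, $x \mapsto x + cI_n$. Since $cI_n$ lies in the center of $\mathrm{M}_n$, for any $g \in \mathrm{GL}_n$ one has $g^{-1}(x+cI_n)g = g^{-1}xg + cI_n$; in particular $\tau_c$ is simultaneously $\mathrm{GL}_n$- and $\mathrm{SL}_n$-equivariant. Adding $cI_n$ shifts the characteristic polynomial from $\chi(t)$ to $\chi(t-c)$, so $\tau_c$ indeed lands in $X_c$; its inverse is $\tau_{-c}$, and it restricts to a bijection $X(\mathcal{O}_{k_v}) \xrightarrow{\sim} X_c(\mathcal{O}_{k_v})$ for each $v \in \Omega_k \setminus \infty_k$ since $cI_n \in \mathrm{M}_n(\mathcal{O}_{k_v})$.

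For (1), the equivariance of $\tau_c$ together with $\tau_c(x_0) = x_c$ transfers the homogeneous space structure from $X$ to $X_c$ via Proposition \ref{prop:homogeneous}. Moreover, $g^{-1}x_c g = g^{-1}x_0g + cI_n$ equals $x_c$ if and only if $g^{-1}x_0g = x_0$, so $\mathrm{T}_{x_c} = \mathrm{T}_{x_0}$ and $\mathrm{S}_{x_c} = \mathrm{S}_{x_0}$ as group schemes. Composing the isomorphisms $X \cong \mathrm{T}_{x_0}\backslash \mathrm{GL}_n$ and $X \cong \mathrm{S}_{x_0}\backslash \mathrm{SL}_n$ with $\tau_c$ yields the asserted isomorphisms for $X_c$.

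For (2), since $\tau_c$ is an isomorphism of $k$-schemes, the contravariant functor $\Br(-)$ produces the isomorphism $\Br X \cong \Br X_c$, $\xi \mapsto \xi_c := \tau_{-c}^*\xi$. For any $x \in X(k_v)$, functoriality of the Brauer evaluation in Definition \ref{def:brauer_evaluation} yields $\xi_{c,v}(x+cI_n) = (\tau_{-c}\circ \tau_c \circ x)^*\xi = \xi_v(x)$; specializing at $x = x_0$ gives $\xi_{c,v}(x_c) = \xi_v(x_0)$, whence $\tilde{\xi}_{c,v}(x+cI_n) = \tilde{\xi}_v(x)$ after normalization.

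For the integral identity, the main point is that $\tau_c$ preserves the measure $d\mu_v$ of (\ref{eq:quotient_measure}). Under the identifications $X(k_v) \cong \mathrm{T}_{x_0}(k_v)\backslash \mathrm{GL}_n(k_v) \cong X_c(k_v)$ (the second via $\tau_c$ combined with $\mathrm{T}_{x_c} = \mathrm{T}_{x_0}$), the map $\tau_c$ corresponds to the identity on the common quotient, while on both sides $d\mu_v$ is the quotient measure $dg_v/dt_v$ built from the same pair $(\mathrm{GL}_n(k_v),\mathrm{T}_{x_0}(k_v))$. Combining this with the bijection on integral points and the evaluation identity above yields
\[
\int_{X_c(\mathcal{O}_{k_v})} \tilde{\xi}_{c,v}(y)\, d\mu_v
= \int_{X(\mathcal{O}_{k_v})} \tilde{\xi}_{c,v}(x+cI_n)\, d\mu_v
= \int_{X(\mathcal{O}_{k_v})} \tilde{\xi}_v(x)\, d\mu_v.
\]
No step presents a serious obstacle: the only nontrivial verification is the measure compatibility, and it reduces immediately to the coincidence $\mathrm{T}_{x_c} = \mathrm{T}_{x_0}$ noted in (1).
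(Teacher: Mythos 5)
Your proof is correct and follows essentially the same route as the paper's: both use the translation map $x \mapsto x + cI_n$, observe that it is conjugation-equivariant with $\mathrm{T}_{x_c}=\mathrm{T}_{x_0}$ so that both quotients $\mathrm{T}_{x_0}(k_v)\backslash\mathrm{GL}_n(k_v)$ coincide and $d\mu_v$ transports trivially, apply the contravariant Brauer functor to the isomorphism and its inverse to get $\xi \mapsto \xi_c$ with $\xi_{c,v}(x+cI_n)=\xi_v(x)$, and then normalize and change variables in the integral. The only cosmetic difference is your notation $\tau_{-c}$ for $\tau_c^{-1}:X_c\to X$, which could be read as the map $X\to X_{-c}$, but the intent is unambiguous from context.
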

\begin{proof}
Since the map $\iota_c: X \rightarrow X_c,\ x \mapsto x+cI_n$ is an isomorphism which is compatible with the conjugation of $\mathrm{GL}_n$ (resp. $\mathrm{SL}_n$), we verify the first argument from the homogeneous space structure of $X$ with a point $x_0$, in Proposition \ref{prop:homogeneous}. 
This directly yields the following commutative diagram:
\begin{equation}\label{eq:diagram_translation}
    \begin{tikzcd}
X_{k_v} \arrow[d, "\sim" labl] \arrow[r, "\iota_c"]           & X_{c,k_v} \arrow[d, "\sim" labl]                \\
\mathrm{T}_{x_0,k_v}\backslash \mathrm{GL}_{n,k_v} \arrow[r, "id"] & \mathrm{T}_{x_c,k_v}\backslash \mathrm{GL}_{n,k_v}.
\end{tikzcd}
\end{equation}
Here, we note that $\mathrm{T}_{x_0,k_v}=\mathrm{T}_{x_c,k_v}$ in $\mathrm{GL}_{n,k_v}$.

On the other hand, applying the functor $\Br=\mathrm{H}_{\acute{e}t}^2(-, \mathbb{G}_{m})$ on the isomorphism $\iota_c$ (cf. Definition \ref{def:brauer}), we have the induced isomorphism $\Br(\iota_c): \Br X_c\xrightarrow{\sim} \Br X$.
We then have the following commutative diagram for $x\in X(k_v)$,
\[
    \begin{tikzcd}[column sep=huge]
    \Br X_c & \Br k_v & &\\
    \Br X & \Br k_v & \mathbb{Q}/\mathbb{Z} &\mathbb{C}^\times.  
	\arrow["\Br(x+cI_n)",from=1-1, to=1-2]
 \arrow["\sim"labl,"\Br(\iota_c)",from=1-1, to=2-1]
 \arrow["id",from=1-2, to=2-2]
	\arrow["\Br(x)",from=2-1, to=2-2]
    \arrow["\inv",hook,from=2-2, to=2-3]
    \arrow["x \mapsto \exp(2\pi i x)",from=2-3, to=2-4]
	\end{tikzcd}
    \]
For $\xi \in \Br X$, this yields that $\xi_v(x)=\xi_{c,v}(x+cI_n)$ where $\xi_c:=\Br(\iota_c)^{-1}\xi$. 
By Definition \ref{def:normalizedeval}, we have
\[
\tilde{\xi}_v(x)=\frac{\xi_v(x)}{\xi_v(x_0)}=\frac{\xi_{c,v}(x+cI_n)}{\xi_{c,v}(x_c)}=\tilde{\xi}_{c,v}(x+cI_n)=\tilde{\xi}_{c,v}(\iota_c(k_v)(x))
\]for $x\in X(k_v)$.
By the diagram (\ref{eq:diagram_translation}), with respect to the measure $d\mu_v$ on $\mathrm{T}_{x_0,k_v}\backslash \mathrm{GL}_{n,k_v}=\mathrm{T}_{x_c,k_v}\backslash \mathrm{GL}_{n,k_v}$,
the evaluation $\tilde{\xi}_{c,v}$ on $X_c(k_v)$ is compatible with 
the evaluation $\tilde{\xi}_{v}$ on $X(k_v)$ along the isomorphism $\iota_c$. 
Thus we have that
\[
\int_{X_c (\mathcal{O}_{k_v})}\tilde{\xi}_{c,v}(y)\ d\mu_v
=\int_{X(\mathcal{O}_{k_v})}\tilde{\xi}_{c,v}(\iota_c(k_v)(x))\ d\mu_v
=\int_{X(\mathcal{O}_{k_v})}\tilde{\xi}_{v}(x)\ d\mu_v.
\]
\end{proof}

\begin{proposition}\label{prop:fundlemforsln}
Suppose that $\chi(0)=1$ or $\mathrm{char}(\kappa_v)>n$.
   For $\xi \in \Br X$ with non-trivial evaluation $\tilde{\xi}_v$, we have
   \[
   \int_{X(\mathcal{O}_{k_v})} \tilde{\xi}_v(x)\ d\mu_v
   =|\Delta_\chi|_v^{-1/2}.
   \]
\end{proposition}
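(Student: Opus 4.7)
The plan is to rewrite the left-hand side as the $\kappa$-twisted sum of orbital integrals appearing on the left-hand side of the fundamental lemma (Lemma \ref{lem:fundamentallemma}), and then invoke that lemma with $\kappa = \kappa_{\tilde{\xi}_v}$, the character produced in Lemma \ref{lem:charstofevalxi}.

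The first step is to decompose $X(\mathcal{O}_{k_v})$ along $\mathrm{SL}_n(k_v)$-orbits using Remark \ref{rmk:orbitforsln}. On each orbit $x_0'\cdot \mathrm{SL}_n(k_v) \cong \mathrm{S}_{x_0'}(k_v)\backslash\mathrm{SL}_n(k_v)$, Corollary \ref{cor:comparerationalstable} asserts that $\tilde{\xi}_v$ is constant, and Lemma \ref{lem:charstofevalxi} identifies this constant as $\kappa_{\tilde{\xi}_v}(z_{x_0'})$. Factoring these scalars out of each summand yields
\[
\int_{X(\mathcal{O}_{k_v})}\tilde{\xi}_v(x)\,d\mu_v \;=\; \sum_{x_0'\in\mathcal{R}_{x_0,v}} \kappa_{\tilde{\xi}_v}(z_{x_0'}) \int_{(x_0'\cdot\mathrm{SL}_n(k_v))\cap X(\mathcal{O}_{k_v})} d\mu_v.
\]
Under the orbit parametrization, the intersection condition $g^{-1}x_0' g \in X(\mathcal{O}_{k_v})$ is exactly $\mathbbm{1}_{\mathfrak{gl}_n(\mathcal{O}_{k_v})}(g^{-1}x_0' g) = 1$, and by Lemma \ref{lem:measure_comparison_gln_sln} the measure $d\mu_v$ restricted to the orbit equals $dh_v/ds_v'$. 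Hence each summand agrees with the corresponding orbital integral in (\ref{eq:SLnkappaorbitalintegral2}), so the full right-hand side coincides with the left-hand side of that identity for $\kappa = \kappa_{\tilde{\xi}_v}$.

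It then remains to verify that the hypotheses of Lemma \ref{lem:fundamentallemma} are available. If $\chi(0) = 1$, one may take (or replace $x_0$ by a representative) $x_0 \in \mathrm{SL}_n(\mathcal{O}_{k_v})$ and invoke the group case directly. If instead $\mathrm{char}(\kappa_v) > n$, then $n$ is a unit in $\mathcal{O}_{k_v}$, so $c := -\mathrm{Tr}(x_0)/n \in \mathcal{O}_{k_v}$ is integral and $x_c = x_0 + cI_n \in \mathfrak{sl}_n(\mathcal{O}_{k_v})$. The local analog of Lemma \ref{lem:translation} shows the integral over $X(\mathcal{O}_{k_v})$ weighted by $\tilde{\xi}_v$ equals that over $X_c(\mathcal{O}_{k_v})$ weighted by $\tilde{\xi}_{c,v}$; together with $\mathrm{S}_{x_0,k_v} = \mathrm{S}_{x_c,k_v}$, the identity $\Delta_{\chi(x-c)} = \Delta_\chi$, and the transport of $\kappa_{\tilde{\xi}_v}$, the Lie algebra case of the fundamental lemma delivers the value $|\Delta_\chi|_v^{-1/2}$.

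The principal obstacle is confirming that the translation step is fully compatible with the endoscopic set-up, namely that the Brauer-evaluation character $\kappa_{\tilde{\xi}_v}$ attached to $X$ matches the analogous character for $X_c$. This should be essentially automatic because (i) the stabilizers, and hence $\mathrm{H}^1(k_v,\mathrm{S}_{x_0,k_v})$, are invariant under $x \mapsto x+cI_n$, and (ii) the explicit formula of Proposition \ref{prop:evaluation} depends only on $\det g_x$, which is preserved by the $\mathrm{GL}_n$-equivariant shift. Once these compatibilities are in place, the remainder is bookkeeping: combining the orbit decomposition, the measure comparison, and Lemma \ref{lem:fundamentallemma}.
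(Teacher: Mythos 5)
Your proposal is correct and follows essentially the same route as the paper: decompose $X(\mathcal{O}_{k_v})$ into $\mathrm{SL}_n(k_v)$-orbits via Remark \ref{rmk:orbitforsln}, factor out $\kappa_{\tilde{\xi}_v}(z_{x_0'})$ using Corollary \ref{cor:comparerationalstable} and Lemma \ref{lem:charstofevalxi}, match measures by Lemma \ref{lem:measure_comparison_gln_sln}, reduce to an integral point of $\mathfrak{sl}_n$ (resp.\ $\mathrm{SL}_n$) by the translation Lemma \ref{lem:translation} when $\mathrm{char}(\kappa_v)>n$ (resp.\ $\chi(0)=1$), and invoke Lemma \ref{lem:fundamentallemma}. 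The only difference is cosmetic: the paper performs the translation before the orbit decomposition rather than after, and your explicit remark that $\kappa_{\tilde{\xi}_v}$ is carried unchanged across the shift $x\mapsto x+cI_n$ (since $\mathrm{S}_{x_0,k_v}=\mathrm{S}_{x_c,k_v}$ and $\det g_x$ is preserved) makes explicit a compatibility the paper leaves implicit.
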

\begin{proof}
If $\mathrm{char}(\kappa_v)>n$, then we can choose $c\in \mathcal{O}_k$ for $x_0$ such that $x_c:=x_0+cI_n\in \mathfrak{sl}_n(\mathcal{O}_{k_v})$. Accordingly, we can take $\mathcal{R}_{x_c,v}$ to be $\{x_0'+cI_n\mid x_0'\in\mathcal{R}_{x_0,v}\}\subset \mathfrak{sl}_n(\mathcal{O}_{k_v})$  as well.
Since the discriminant $\Delta_\chi$ is invaraint under the translation on $\chi(x)$ and $\mathrm{S}_{x_0,k_v}=\mathrm{S}_{x_c,k_v}$, we may and do assume that $x_0\in \mathfrak{sl}_n(\mathcal{O}_{k_v})$ with $\mathcal{R}_{x_0,v}\subset \mathfrak{sl}_n(\mathcal{O}_{k_v})$, by Lemma \ref{lem:translation}, in the case that $\mathrm{char}(\kappa_v)>n$.

For the character $\kappa_{\tilde{\xi}_v}$ of $\mathrm{H}^1(k_v,\mathrm{S}_{x_0,k_v})$ associated with $\tilde{\xi}_v$
in Lemma \ref{lem:charstofevalxi}, 
under the identification $x_0'\cdot \mathrm{SL}_n(k_v)  \cong \mathrm{S}_{x_0'}(k_v)\backslash \mathrm{SL}_{n}(k_v)$ in Remark \ref{rmk:orbitforsln} for each $x_0'\in \mathcal{R}_{x_0}$,
we have
\begin{equation}
    \int_{X(\mathcal{O}_{k_v})} \tilde{\xi}_v(x)\ d\mu_v=
\sum_{x_0'\in\mathcal{R}_{x_0,v}}\kappa_{\tilde{\xi}_v}(z_{x_0'})\cdot \int_{\left(\mathrm{S}_{x_0'}(k_v)\backslash \mathrm{SL}_{n}(k_v)\right) }\mathbbm{1}_{\mathfrak{gl}_n(\mathcal{O}_{k_v})}(g^{-1}x_0'g)
   \ d\mu_v.
\end{equation}Here, $z_{x_0'}\in \mathrm{H}^1(k_v,\mathrm{S}_{x_0,k_v})$ corresponds to the orbit $x_0'\cdot \mathrm{SL}_n(k_v)$ in the sense of Definition \ref{def:slnorbits}. 
Then, by Lemmas \ref{lem:fundamentallemma}-\ref{lem:measure_comparison_gln_sln}, we have the desired result.
\end{proof}

\section{Main Result}\label{sec:main}
We provide our main theorem on the asymptotic formula for $N(X, T)$ based on the results in Sections \ref{sec:5}-\ref{sec:integral_computation}.
We note that the product $\prod_{v \in \Omega_k \setminus\infty_k } \mathcal{O}_{\chi,d\mu_v}(\mathbbm{1}_{\mathfrak{gl}_{n}(\mathcal{O}_{k_v})})$ which appears in the following theorem is a finite product by Remark \ref{rmk:SOfinite}. 

\begin{theorem}\label{thm:main_thm}
Let $\chi(x) \in \mathcal{O}_k[x]$ be an irreducible monic polynomial of degree $n$.
Let $X$ be an $\mathcal{O}_k$-scheme representing the set of $n\times n$ matrices whose characteristic polynomial is $\chi(x)$.
We define \[N(X, T)=\#\{x\in X(\mathcal{O}_k)\mid \norm{x}\leq T\},\]for $T>0$, where the norm $\norm{\cdot}$ is defined in (\ref{def:norm}).

Suppose that $k$ and $K = k[x]/(\chi(x))$ are totally real, and
if $k \neq \mathbb{Q}$, we further assume that $n$ is a prime number.  We then have the following asymptotic formulas.
\begin{enumerate}
        \item If $K/k$ is not Galois or ramified Galois, then
        \[ N(X, T) \sim C_T \prod_{v \in \Omega_k \setminus\infty_k } \frac{\mathcal{O}_{\chi,d\mu_v}(\mathbbm{1}_{\mathfrak{gl}_{n}(\mathcal{O}_{k_v})})}{q_v^{S_v(\chi)}}.\]
    
        \item If $K/k$ is unramified Galois, and $\chi(0)=1$ or $K_v$ splits over $k_v$ for all $p$-adic places with $p \leq n$, then
        \[N(X, T) \sim C_T \left(  \prod_{v \in \Omega_k \setminus\infty_k }\frac{\mathcal{O}_{\chi,d\mu_v}(\mathbbm{1}_{\mathfrak{gl}_{n}(\mathcal{O}_{k_v})})}{q_v^{S_v(\chi)}} +n-1\right).\]        
\end{enumerate}
    Here, $\mathcal{O}_{\chi,d\mu_v}(\mathbbm{1}_{\mathfrak{gl}_n(\mathcal{O}_{k_v})})$ denotes the orbital integral of $\mathfrak{gl}_n$ for $\mathbbm{1}_{\mathfrak{gl}_n(\mathcal{O}_{k_v})}$ and the characteristic polynomial $\chi(x)$ with respect to the measure $d\mu_v$ defined in (\ref{eq:quotient_measure}), $S_v(\chi)$ is the $\mathcal{O}_{k_v}$-module length between $\mathcal{O}_{K_v}$ and $\mathcal{O}_{k_v}[x]/(\chi(x))$, 
    \[C_T:= |\Delta_k|^{\frac{-n^2+n}{2}} \frac{R_K h_K \sqrt{\abs{\Delta_K}}^{-1}}{R_k h_k \sqrt{\abs{\Delta_k}}^{-1}} \left(\prod_{i=2}^n \zeta_k(i)^{-1}\right)\left( \frac{2^{n-1}w_n\pi^{\frac{n(n+1)}{4}} }{\prod_{i=1}^n \Gamma(\frac{i}{2})} T^{\frac{n(n-1)}{2}}  \right)^{[k:\Q]},\]
    and we use the following notations:
    \begin{itemize}
        \item $R_F$ is the regulator of $F$, and $h_F$ is the class number of $\mathcal{O}_F$ for $F=k$ or $K$. 
        \item $w_n$ is the volume of the unit ball in $\mathbb{R}^{\frac{n(n-1)}{2}}$, and $\zeta_k$ is the Dedekind zeta function of $k$.
    \end{itemize}
\end{theorem}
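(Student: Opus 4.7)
The strategy is to apply Proposition \ref{prop:main_idea_intro} and evaluate each summand indexed by $\xi \in \Br X / \Br k$ using the local integrals assembled in Sections \ref{sec:5}--\ref{sec:integral_computation}. First I replace each $\xi_v$ by its normalization $\tilde{\xi}_v$ using (\ref{eq:normalized_evaluation}), and convert the Tamagawa measure $m^X$ into $\prod_v d\mu_v$ via Lemma \ref{lem:measure_comparison_m_mu}, producing the global prefactor $C_{\mathrm{meas}}$. The problem reduces to computing, for each $\xi$,
\[
\mathcal{I}(\xi, T) := \Bigl(\prod_{v\in\infty_k} \int_{X(k_v,T)} \tilde{\xi}_v\, d\mu_v\Bigr)\Bigl(\prod_{v\in\Omega_k\setminus\infty_k} \int_{X(\mathcal{O}_{k_v})} \tilde{\xi}_v\, d\mu_v\Bigr),
\]
and then summing $C_{\mathrm{meas}}\sum_\xi \mathcal{I}(\xi,T)$. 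Since $K$ is totally real, at every Archimedean place $K_v$ is not a field, so Proposition \ref{prop:evaluation}.(2) forces $\tilde{\xi}_v \equiv 1$ and Proposition \ref{prop:eq_int_arch} supplies the Archimedean factor uniformly in $\xi$.

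Next I dispose of classes not contributing in case~(1). If $K/k$ is not Galois then Proposition \ref{prop:not_galois} gives $\Br X/\Br k = 1$; if $K/k$ is ramified Galois then for each non-trivial $\xi$ Proposition \ref{prop:ramify_zero} kills the local factor at a ramified place. Hence in case~(1) only $\xi = 1$ survives, and by Proposition \ref{prop:result_case:trivial} each non-Archimedean integral equals $\mathcal{O}_{\chi,d\mu_v}(\mathbbm{1}_{\mathfrak{gl}_n(\mathcal{O}_{k_v})})$. Multiplying by $C_{\mathrm{meas}}$, the $|\Delta_\chi|_v^{-1/2}$-factors from $C_{\mathrm{meas}}$ and the Archimedean volumes combine into $\prod_{v\in\Omega_k}|\Delta_\chi|_v^{-1/2}$, which equals $1$ by the product formula on $\Delta_\chi\in k^\times$. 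Regrouping the surviving factors yields $C_T\prod_{v\in\Omega_k\setminus\infty_k}\mathcal{O}_{\chi,d\mu_v}(\mathbbm{1}_{\mathfrak{gl}_n(\mathcal{O}_{k_v})})/q_v^{S_v(\chi)}$, as claimed in part~(1).

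In case~(2), Proposition \ref{prop:isomorphism_Br_hom} gives $|\Br X/\Br k|=n$, so $n-1$ non-trivial classes remain. For each non-trivial $\xi$ and each non-Archimedean $v$, I distinguish two subcases. At a split $v$, $K_v$ is not a field, Proposition \ref{prop:evaluation}.(2) again gives $\tilde{\xi}_v\equiv 1$, and the integral is $\mathcal{O}_{\chi,d\mu_v}(\mathbbm{1}_{\mathfrak{gl}_n(\mathcal{O}_{k_v})})$. At an inert $v$, $K_v/k_v$ is an unramified Galois field extension of prime degree $n$, so Proposition \ref{prop:fundlemforsln} delivers $|\Delta_\chi|_v^{-1/2}$; the dichotomy ``$\chi(0)=1$ or $K_v$ splits at every $p$-adic place with $p\leq n$'' is exactly what is needed to apply Proposition \ref{prop:fundlemforsln} at every inert $v$, the first alternative activating the group version and the second forcing $\mathrm{char}(\kappa_v)>n$ so that the Lie-algebra version applies. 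Since all $n-1$ non-trivial $\xi$ yield the same local values, assembling everything produces
\[
C_{\mathrm{meas}}\sum_\xi \mathcal{I}(\xi,T) = C_T\Bigl[\prod_{v\in\Omega_k\setminus\infty_k}\frac{\mathcal{O}_{\chi,d\mu_v}(\mathbbm{1}_{\mathfrak{gl}_n(\mathcal{O}_{k_v})})}{q_v^{S_v(\chi)}} + (n-1)\prod_{v\text{ split}}\frac{\mathcal{O}_{\chi,d\mu_v}(\mathbbm{1}_{\mathfrak{gl}_n(\mathcal{O}_{k_v})})}{q_v^{S_v(\chi)}}\prod_{v\text{ inert}}\frac{|\Delta_\chi|_v^{-1/2}}{q_v^{S_v(\chi)}}\Bigr].
\]

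The main obstacle is collapsing the bracketed expression to $\prod_v \mathcal{O}_{\chi,d\mu_v}(\mathbbm{1}_{\mathfrak{gl}_n(\mathcal{O}_{k_v})})/q_v^{S_v(\chi)} + (n-1)$. This rests on two identities specific to case~(2). Since $K/k$ is everywhere unramified, $|\Delta_{K/k}|_v=1$ at every finite $v$, whence $|\Delta_\chi|_v^{-1/2}=q_v^{S_v(\chi)}$, eliminating the inert factor. The remaining identity $\prod_{v\text{ split}} \mathcal{O}_{\chi,d\mu_v}(\mathbbm{1}_{\mathfrak{gl}_n(\mathcal{O}_{k_v})})/q_v^{S_v(\chi)} = 1$ is the delicate step; I plan to verify it place-by-place by parametrising the $\mathrm{T}_{x_0}(k_v)$-orbits of $x_0$-stable $\mathcal{O}_{k_v}$-lattices and computing the volume of each via the index $[T_c : T\cap gKg^{-1}]$, following the explicit formulas recorded in Appendix~\ref{app:orbital_integral} and \cite{CKL}. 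The combinatorial sum is controlled by the conductor $\pi_v^{S_v(\chi)}$ of $\mathcal{O}_{k_v}[x_0]$ in $\mathcal{O}_{K_v}$, and yields exactly $q_v^{S_v(\chi)}$ at each unramified split place. With these two identifications in hand the bracketed sum simplifies as required, completing case~(2).
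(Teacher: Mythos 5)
Your overall strategy follows the paper's line precisely through the reduction via Proposition \ref{prop:main_idea_intro}, (\ref{eq:normalized_evaluation}), Lemma \ref{lem:measure_comparison_m_mu}, the Archimedean factor, case~(1), and the dichotomy (split/inert) in case~(2). Your observation that the hypothesis ``$\chi(0)=1$ or $K_v$ splits at every $p$-adic place with $p\le n$'' is exactly what activates one branch or the other of Proposition \ref{prop:fundlemforsln} at each inert place is correct and is the same logic the paper uses. The simplification $|\Delta_\chi|_v^{-1/2}=q_v^{S_v(\chi)}$ at unramified inert $v$ is also right.

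The genuine gap is the split-place identity. You reduce case~(2) to showing $\mathcal{O}_{\chi,d\mu_v}(\mathbbm{1}_{\mathfrak{gl}_n(\mathcal{O}_{k_v})})=q_v^{S_v(\chi)}$ at every split $v$, but then you only \emph{plan} to verify it by parametrising $x_0$-stable lattices and reading off volumes ``following the explicit formulas recorded in Appendix \ref{app:orbital_integral} and \cite{CKL}.'' Those closed formulas only exist for $n=2$ and $n=3$ (Proposition \ref{prop:productoforbitalintegral}), so that route cannot establish the identity for a general prime $n$, which is exactly the regime of case~(2) when $k\ne\Q$. The paper's argument is different and crucially more general: it invokes the reduction formula of \cite[Corollary 4.10]{Yun13}, which for $\chi=\prod_i\chi_{v,i}$ gives
\[
\mathcal{O}_{\chi,d\mu_v}(\mathbbm{1}_{\mathfrak{gl}_n(\mathcal{O}_{k_v})})=q_v^{S_v(\chi)-\sum_{i}S_v(\chi_{v,i})}\prod_{i}\mathcal{O}_{\chi_{v,i},d\mu_v}(\mathbbm{1}_{\mathfrak{gl}_{n_i}(\mathcal{O}_{k_v})}),
\]
and since all factors $\chi_{v,i}$ are linear at a split place, each $S_v(\chi_{v,i})=0$ and each linear orbital integral equals $1$, yielding $q_v^{S_v(\chi)}$ directly for any $n$. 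You should replace your lattice-counting plan with this reduction formula (or an equivalent general argument); as written, your proof of case~(2) is incomplete for $n>3$.
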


\begin{proof}  
    Recall the formula in Proposition \ref{prop:main_idea_intro}.
    Applying the equation (\ref{eq:normalized_evaluation}) and Lemma \ref{lem:measure_comparison_m_mu}, we have
    \begin{equation}\label{eq:conclusioneq}
    N(X, T) \sim C_{meas} \sum_{\xi \in \Br X / \Br k}\int_{X(k_\infty, T)} \tilde{\xi}_\infty \ d\mu_\infty \prod_{v \in \Omega_k \setminus\infty_k} \int_{X(\mathcal{O}_{k_v})} \tilde{\xi}_v \ d\mu_v. \end{equation}
    Since $\chi(x)$ splits completely over $k_v$ for all $v\in\infty_k$, the evaluation $\tilde{\xi}_\infty$ is trivial by Proposition \ref{prop:evaluation}.(2). Hence, Proposition \ref{prop:eq_int_arch} yields that
    \[
   N(X,T)\sim C_T\sum_{\xi\in\Br X/\Br k}\prod_{v\in \Omega_k\setminus\infty_k}\frac{1}{q_v^{S_v(\gamma)}} \int_{X(\mathcal{O}_{k_v})}\tilde{\xi}_v(x)d\mu_v.
   \]Here, we note that $\prod_{v\in\Omega_k}|\Delta_\chi|_v^{-1/2}=1$ by the product formula for $\Delta_\chi \in k$.

We now compute the integration of $\tilde{\xi}_v(x)$ on $X(\mathcal{O}_{k_v})$ with respect to $\abs{\omega_{X_{k_v}}^{can}}_v$ for each $v\in \Omega_k\setminus \infty_k$. 
\begin{enumerate}
    \item
    In the case that $K/k$ is not Galois, $\Br X/\Br k$ is trivial by Proposition \ref{prop:not_galois}.
    In the case that $K/k$ is Galois and ramified, the summand in (\ref{eq:conclusioneq}) corresponding to a non-trivial element $\xi \in \Br X / \Br k$ vanishes by Corollary \ref{cor:nontrivial_eval} and Proposition \ref{prop:ramify_zero}.
    Moreover, when $k=\mathbb{Q}$, the same conclusion holds, even without assuming that $n$ is prime, as shown in the proof of \cite[Theorem 6.1]{WX}.
    Therefore, we have
    \[ N(X, T) \sim C_T \prod_{v \in \Omega_k \setminus\infty_k } \frac{\mathcal{O}_{\chi,d\mu_v}(\mathbbm{1}_{\mathfrak{gl}_n(\mathcal{O}_{k_v})})}{q_v^{S_v(\chi)}}. \]
    \item 
    In the case that $\xi \in \Br X / \Br k$ is trivial, the computation of the integral of $\tilde{\xi}_v$ over $X(\mathcal{O}_{k_v})$ is exactly the same as that in (1).
    For a non-trivial element $\xi \in \Br X / \Br k$, we address the following two cases separately; $K_v$ splits over $k_v$ and $K_v$ is unramified over $k_v$.

    \begin{itemize}

        \item 
     If $K_v$ splits over $k_v$, then $\chi(x)$ splits completely into linear factors and the evaluation $\tilde{\xi}_v$ is trivial by Proposition \ref{prop:evaluation}.(1).
     We then have
    \[\frac{1}{q_v^{S_v(\chi)}}\int_{X(\mathcal{O}_{k_v})} \tilde{\xi}_v\ d\mu_v =   \frac{\mathcal{O}_{\chi,d\mu_v}(\mathbbm{1}_{\mathfrak{gl}_n(\mathcal{O}_{k_v})})}{q_v^{S_v(\chi)}}.\]
 By \cite[Corollary 4.10]{Yun13}, we have 
\[
\mathcal{O}_{\chi,d\mu_v}(\mathbbm{1}_{\mathfrak{gl}_n(\mathcal{O}_{k_v})})=q_v^{S_v(\chi)-\sum\limits_{i\in B_v(\chi)}S_v(\chi_{v,i})}\prod_{i\in B_v(\chi)}\mathcal{O}_{\chi_{v,i},d\mu_v}(\mathbbm{1}_{\mathfrak{gl}_n(\mathcal{O}_{k_v})}).
\]
Here, the orbital integral $\mathcal{O}_{\chi_{v,i},d\mu_v}(\mathbbm{1}_{\mathfrak{gl}_n(\mathcal{O}_{k_v})})$ for the linear characteristic polynomial $\chi_{v,i}$ equals to 1, and
the $\mathcal{O}_{k_v}$-module length $S_v(\chi_{v,i})$ between $\mathcal{O}_{K_{v,i}}=\mathcal{O}_{k_v}$ and $\mathcal{O}_{k_v}[x]/(\chi_{v,i}(x))$ equals to 0.
We then have 
\[
\mathcal{O}_{\chi_{v,i},d\mu_v}(\mathbbm{1}_{\mathfrak{gl}_n(\mathcal{O}_{k_v})})=q_v^{S_v(\chi)}.
\]
\item
If $K_v/k_v$ is an unramified Galois extension, then the evaluation $\tilde{\xi}_v$ is non-trivial by Corollary \ref{cor:nontrivial_eval}, and $\chi(0)=1$ or $\mathrm{char}(\kappa_v)>n$.
Hence Proposition \ref{prop:fundlemforsln} directly yields that
   \[ \int_{X(\mathcal{O}_{k_v})} \tilde{\xi}_v\ d\mu_v
   =|\Delta_\chi|_v^{-1/2}=q_v^{S_v(\chi)}.
   \]
  Here, $\frac{\ord_v(\Delta_\chi )}{2}=S_v(\chi)+\frac{\ord_v(\Delta_{K_v/k_v})}{2}=S_v
   (\chi)$ by \cite[Proposition 2.5]{CKL} and \cite[Theorem 2.6 in Chapter III]{Neu}.
   \end{itemize}
   To sum up, for a non-trivial $\xi \in \Br X / \Br k$, we have 
   \begin{align*}
       \prod_{v \in \Omega_k \setminus\infty_k} \frac{1}{q_v^{S_v(\chi)}}\int_{X(\mathcal{O}_{k_v})}\tilde{\xi}_v \ d\mu_v &=
       1.
   \end{align*}
    Proposition \ref{prop:evaluation} yields that $\#(\Br X / \Br k)=n$ and hence
    we have $(n-1)$ non-trivial elements in $\Br X /\Br k$
   \[N(X, T) \sim C_T \left(  \prod_{v \in \Omega_k \setminus\infty_k }\frac{\mathcal{O}_{\chi,d\mu_v}(\mathbbm{1}_{\mathfrak{gl}_n(\mathcal{O}_{k_v})})}{q_v^{S_v(\chi)}} +n-1\right).\]
\end{enumerate}
\end{proof}

\begin{remark}\label{rmk:our_result_EMS}
    Suppose that $k=\Q$ and $\chi(x)$ splits completely over $\R$ (assumptions (\ref{eq:conditiononEMS}) without the assumption (2) that $\Z[x]/(\chi(x)) = \mathcal{O}_K$).
    Since every extension of $\Q$ is ramified, the asymptotic formula for $N(X,T)$ follows from Theorem \ref{thm:main_thm}.(1). 
    We then have
     \[
    N(X,T)\sim \left(\frac{2^{n-1}h_K R_K w_n}{\sqrt{\Delta_K} \prod_{k=2}^n\Lambda(k/2)}\prod_{v\in \Omega_k\setminus \infty_k}\frac{\mathcal{O}_{\chi,d\mu_v}(\mathbbm{1}_{\mathfrak{gl}_n(\mathcal{O}_{k_v})})}{q_v^{S_v(\chi)}}\right)T^{\frac{n(n-1)}{2}},
    \]
    where $\Lambda(k/2)=\pi^{-k/2}\Gamma(k/2)\zeta(k)$.

    We verify that the above formula coincides with  (\ref{prop:EMS_intro}) when $\Z[x]/(\chi(x)) = \mathcal{O}_K$.
    Indeed, since $\Z[x]/(\chi(x)) = \mathcal{O}_K$, equivalently $S_v(\chi) = 0$ for all $v \in \Omega_k \setminus \infty_k$, we have $\mathcal{O}_{\chi,d\mu_v}(\mathbbm{1}_{\mathfrak{gl}_n(\mathcal{O}_{k_v})}) = 1$ by Remark \ref{rmk:evalforEMS} and $\Delta_K=\Delta_\chi$.
    Therefore we have
    \[
    N(X,T)\sim \frac{2^{n-1}h_{K} R_{K} w_n}{\sqrt{\Delta_{\chi}}\cdot \prod_{k=2}^n\Lambda(k/2)}T^{\frac{n(n-1)}{2}}.
    \]
\end{remark}
\vspace{1em}

\appendix
\section{Orbital integrals for $\mathfrak{gl}_n$ when $n=2$ and $3$}\label{app:orbital_integral}

In the case that $n=2$ and $3$, we refer to the results in \cite{CKL}, for the closed formula of $\prod_{v \in \Omega_k \setminus\infty_k } \mathcal{O}_{\chi,d\mu_v}(\mathbbm{1}_{\mathfrak{gl}_n(\mathcal{O}_{k_v})})$.

\begin{proposition}\label{prop:productoforbitalintegral}
\begin{enumerate}
    \item 
    For $n = 2$, the product of orbital integrals $\mathcal{O}_{\chi,d\mu_v}(\mathbbm{1}_{\mathfrak{gl}_n(\mathcal{O}_{k_v})})$ is given as follows
    \[\prod_{v \in \Omega_k \setminus\infty_k } \mathcal{O}_{\chi,d\mu_v}(\mathbbm{1}_{\mathfrak{gl}_n(\mathcal{O}_{k_v})}) = \prod_{v \in \Omega_k \setminus\infty_k }\left( 1+\frac{\#\mathrm{T}_{x_0, \mathcal{O}_{k_v}}(\kappa_v)}{q_v-1} \frac{q_v^{S_v(\chi)}-1}{q_v-1} \right).\]
\item
For $n=3$, the product of orbital integrals $\mathcal{O}_{\chi,d\mu_v}(\mathbbm{1}_{\mathfrak{gl}_n(\mathcal{O}_{k_v})})$ is given as follows
\[
\prod_{v \in \Omega_k \setminus\infty_k } \mathcal{O}_{\chi,d\mu_v}(\mathbbm{1}_{\mathfrak{gl}_n(\mathcal{O}_{k_v})}) =
\prod_{v \in \Omega_k \setminus\infty_k }q_v^{\rho_v(\chi)}\left(  1+\frac{\#\mathrm{T}_{x_0, \mathcal{O}_{k_v}}(\kappa_v)}{(q_v-1)^2}
 \Phi_v(\chi) \right).
\]
Here,
\[
\left\{
\begin{array}{l}
\#\mathrm{T}_{x_0,\mathcal{O}_{k_v}}(\kappa_v)=\prod\limits_{i\in B_v(\chi)}q_v^{[K_{v,i}:k_v]-[\kappa_{K_{v,i}}:\kappa_v]}(q_v^{[\kappa_{K_{v,i}}:\kappa_v]}-1),\\
 \rho_v(\chi):= \sum\limits_{{\{i,j\}\subset B_v(\chi),i\neq j}}\ord_v (\mathrm{Res}(\chi_{v,i},\chi_{v,j}))= 
      \left\{
      \begin{array}{l l}
        0   & \textit{if }|B_v(\chi)|=1;\\
        S_v(\chi)-\delta_v  & \textit{if }|B_v(\chi)|=2;\\
        S_v(\chi)&\textit{if }|B_v(\chi)|=3,
      \end{array}\right.\\
\Phi_v(\chi)=\left\{
\begin{array}{l l}
     \frac{q_v^{\delta_v}-1}{q_v-1}&\textit{if $K_v$ splits over $k_v$};\\
     \frac{q_v^{\delta_v}-1}{q_v-1}-\frac{3(q_v^{\delta_v -d_v}-1)}{q_v^2-1}&\textit{if $K_v$ is an unramified field extension of $k_v$};  \\
     \frac{q_v^{\delta_v}-1}{q_v-1}-\frac{3(q_v^{\delta_v -d_v}-1)}{q_v^2-1}+\frac{(1+\delta_v-3d_v)q_v^{\delta_v-d_v}-1}{q_v(q_v+1)} &\textit{if $K_v$ is a ramified field extension of $k_v$},
\end{array}
\right.
\end{array}
\right.
\]
where 
$\left\{
\begin{array}{l}
       \textit{$B_v(\chi)$ is an index set in bijection with irreducible factors $\chi_{v,i}$ of $\chi$ over $k_v$};\\
       \textit{$\mathrm{Res}(\chi_{v,i},\chi_{v,j})$ denotes the resultant of two polynomials $\chi_{v,i}$ and $\chi_{v,j}$};\\
       

        \delta_v:=\max\limits_{i \in B_v(\chi)}\{S_v(\chi_{v,i})\};\\
        S_v(\chi_{v,i}) \textit{ is the $\mathcal{O}_{k_v}$-module length between $\mathcal{O}_{K_{v,i}}$ and $\mathcal{O}_{k_v}[x]/(\chi_{v,i}(x))$};\\

    d_v:=\lfloor \frac{\delta_v}{3} \rfloor \textit{ for the floor function  $\lfloor \cdot \rfloor$}.
\end{array}
\right.
$
\end{enumerate}
\end{proposition}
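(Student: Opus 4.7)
The plan is to reduce the global product to a term-by-term local computation, apply the explicit closed-form formulas for local orbital integrals of $\mathfrak{gl}_n$ for $n=2, 3$ obtained in \cite{CKL}, and then reassemble. The starting observation is that the infinite product is actually finite: whenever $v$ satisfies $S_v(\chi)=0$ (equivalently $\mathcal{O}_{k_v}[x]/(\chi(x))=\mathcal{O}_{K_v}$), we have $\mathcal{O}_{\chi,d\mu_v}(\mathbbm{1}_{\mathfrak{gl}_n(\mathcal{O}_{k_v})})=1$ by Remark \ref{rmk:evalforEMS}, and this holds for all but finitely many $v$. Thus it suffices to compute the local factor at each of the finitely many exceptional places.

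For $n=2$, the first step is to identify the torus: by Proposition \ref{prop:homogeneous} we have $\mathrm{T}_{x_0,\mathcal{O}_{k_v}}\cong \mathrm{R}_{\mathcal{O}_{K_v}/\mathcal{O}_{k_v}}(\mathbb{G}_{m,\mathcal{O}_{K_v}})$, and counting $\kappa_v$-points via the factorization $\mathcal{O}_{K_v}\cong \prod_{i\in B_v(\chi)}\mathcal{O}_{K_{v,i}}$ yields the stated expression for $\#\mathrm{T}_{x_0,\mathcal{O}_{k_v}}(\kappa_v)$. The second step is to invoke the explicit formula from \cite{CKL} for $\mathcal{O}_{\chi,d\mu_v}(\mathbbm{1}_{\mathfrak{gl}_2(\mathcal{O}_{k_v})})$, which is obtained via a lattice stratification of $\mathrm{T}_{x_0}(k_v)\backslash\mathrm{GL}_2(k_v)/\mathrm{GL}_2(\mathcal{O}_{k_v})$ organized by the invariant $S_v(\chi)$; substituting gives the displayed formula. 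Taking the product over $v$ completes the case.

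For $n=3$, I would mirror the same outline but with the more delicate case analysis of \cite{CKL}. The formula for $\rho_v(\chi)$ comes from the resultant identity underlying \cite[Corollary 4.10]{Yun13}, which separates out the contribution $\mathcal{O}_{\chi_{v,i},d\mu_v}$ from a cross-term governed by $\ord_v(\mathrm{Res}(\chi_{v,i},\chi_{v,j}))$ whenever $\chi$ splits nontrivially over $k_v$. The factor $\Phi_v(\chi)$ encodes the remaining contribution from a single irreducible stratum; its form branches according to whether $K_v/k_v$ is split, unramified field, or ramified field, which in turn determines both the smoothness of $\mathrm{T}_{x_0,\mathcal{O}_{k_v}}$ and the size of the relevant $\kappa_v$-orbits on the affine Springer fiber. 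The split and unramified subcases give closed geometric series in $q_v$, while the ramified subcase picks up an additional correction involving $d_v=\lfloor \delta_v/3\rfloor$.

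The main obstacle is the ramified subcase for $n=3$: the stabilizer $\mathrm{T}_{x_0,\mathcal{O}_{k_v}}$ fails to be smooth and the point count on the relevant Springer fiber is no longer a pure polynomial in $q_v$, which is precisely what forces the floor-function correction $d_v$. All of these local computations appear in \cite{CKL}, so at the level of proof the task is to quote those formulas, verify that our normalization of $d\mu_v$ in (\ref{eq:quotient_measure}) matches theirs (so no scaling discrepancy is introduced), and take the product over $v\in \Omega_k\setminus\infty_k$.
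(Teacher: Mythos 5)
Your plan matches the paper's proof in substance: both parts proceed by quoting the explicit closed-form formulas for the local orbital integrals from \cite{CKL} (Remark~5.7 for $n=2$, Remark~6.7 for $n=3$), supplying the explicit point count $\#\mathrm{T}_{x_0,\mathcal{O}_{k_v}}(\kappa_v)$ case by case via the torus description $\mathrm{T}_{x_0}\cong\mathrm{R}_{K/k}(\mathbb{G}_m)$ and the factorization $\mathcal{O}_{K_v}\cong\prod_i\mathcal{O}_{K_{v,i}}$, and then plugging in to reorganize the CKL formulas into the stated uniform shape; the $\rho_v(\chi)$ identity is likewise drawn from Yun's work. Your added remark about checking that the measure normalization of $d\mu_v$ in~(\ref{eq:quotient_measure}) agrees with the one in \cite{CKL} is a sensible point of care that the paper's proof leaves implicit, but otherwise you have reproduced the same argument.
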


\begin{proof}
\begin{enumerate}
\item
   By  \cite[Remark 5.7]{CKL}, we have
    \[
            \mathcal{O}_{\chi,d\mu_v}(\mathbbm{1}_{\mathfrak{gl}_n(\mathcal{O}_{k_v})}) =
    \left\{
    \begin{array}{l l}
         q_v^{S_v(\chi)}&  \textit{if $K_v$ splits over $ k_v$};\\
         1+(q_v+1) \frac{q_v^{S_v(\chi)}-1}{q_v-1}&  \textit{if $K_v$ is an unramified field extension over $k_v$};\\
         \frac{q_v^{S_v(\chi)+1}-1}{q_v-1}& \textit{if $K_v$ is a ramified field extension over $k_v$}.
    \end{array}
    \right.
        \]
    In order to write the above in a uniform way, we describe  $\mathrm{T}_{x_0, \mathcal{O}_{k_v}}(\kappa_v)$ explicitly as follows.
    \[\mathrm{T}_{x_0, \mathcal{O}_{k_v}}(\kappa_v)\cong
    \left\{
    \begin{array}{l l}
         (\kappa_v\times \kappa_v)^\times&  \textit{if $K_v$ splits over $ k_v$};\\
         \kappa_{K_v}^\times &  \textit{if $K_v$ is an unramified field extension over $k_v$};\\
         (\kappa_v[x]/(x^2))^\times & \textit{if $K_v$ is a ramified field extension over $k_v$},
    \end{array}
    \right.
    \]
    where in the second case, $\kappa_{K_v}$ is a quadratic field extension over $\kappa_v$.
Plugging it into the above formula for $\mathcal{O}_{\chi,d\mu_v}(\mathbbm{1}_{\mathfrak{gl}_n(\mathcal{O}_{k_v})})$, we obtain the desired formula.
\item   By  \cite[Remark 6.7]{CKL}, we have
\begin{enumerate}
\item
 If $\chi(x)$ is an irreducible monic polynomial $\mathcal{O}_{k_v}$, then we have  
\begin{equation*}{\small
\mathcal{O}_{\chi,d\mu_v}(\mathbbm{1}_{\mathfrak{gl}_n(\mathcal{O}_{k_v})})=\left\{\begin{array}{l l}
(q_v^2+q_v+1)\sum\limits_{i=1}^{d_v}(q_v^{3i-2}+2q_v^{3i-3}+3q_v^{2i-2}\frac{q_v^{i-1}-1}{q_v-1})+1 & \textit{if $K_v/k_v$ is unramified and $S_v(\chi)=3d_v$};\\
\sum\limits_{i=1}^{d_v}(q_v^{3i}+2q_v^{3i-1}+q_v^{2i-1}+3q_v^{2i}\frac{q_v^{i-1}-1}{q_v-1})+1& \textit{if $K_v/k_v$ is ramified and $S_v(\chi)=3d_v$};\\
\sum\limits_{i=1}^{d_v}(q_v^{3i+1}+2q_v^{3i}+2q_v^{2i}+3q_v^{2i+1} \frac{q_v^{i-1}-1}{q_v-1})+q_v+1& \textit{if $K_v/k_v$ is ramified and  $S_v(\chi)=3d_v+1$}.\\
\end{array}\right.}
\end{equation*}
Here, we note that $S_v(\chi)$ cannot be of the form $3d_v+2$ (cf. \cite[Proposition 6.2]{CKL}).

\item If $\chi(x)=\chi_{v,1}(x)\chi_{v,2}(x)$ over $\mathcal{O}_{k_v}$ where $\chi_{v,1}(x)\in\mathcal{O}_{k_v}[x]$ is an irreducible monic quadratic polynomial, we have
\[
\mathcal{O}_{\chi,d\mu_v}(\mathbbm{1}_{\mathfrak{gl}_n(\mathcal{O}_{k_v})})=\left\{\begin{array}{l l}
q_v^{S_v(\chi)-S_v(\chi_{v,1})} (1+(q_v+1)\frac{q_v^{S_v(\chi_{v,1})}-1}{q_v-1})& \textit{if $K_{v,1}/k_v$ is unramified};\\
q_v^{S_v(\chi)-S_v(\chi_{v,1})} \frac{q_v^{S_v(\chi_{v,1})+1}-1}{q_v-1} &  \textit{if $K_{v,1}/k_v$ is ramified}.
\end{array}\right.
\]

\item If $\chi(x)$ splits into linear terms completely over $\mathcal{O}_{k_v}$, we have
\[
\mathcal{O}_{\chi,d\mu_v}(\mathbbm{1}_{\mathfrak{gl}_n(\mathcal{O}_{k_v})})=q_v^{S_v(\chi)}.
\]
\end{enumerate}
On the other hand, by \cite[Section 4.1]{Yun13}, we have $\rho_v(\chi)=S_v(\chi)-\sum\limits_{i\in B_v(\chi)}S_v(\chi_{v,i})$.
Since $S_v(\chi_{v,i})=0$ for a linear factor $\chi_{v,i}(x)\in\mathcal{O}_{k_v}[x]$ of $\chi(x)$, we verify that $$\rho_v(\chi)=
      \left\{
      \begin{array}{l l}
        0   & \textit{if }|B_v(\chi)|=1;\\
        S_v(\chi)-\delta_v  & \textit{if }|B_v(\chi)|=2;\\
        S_v(\chi)&\textit{if }|B_v(\chi)|=3.
      \end{array}
      \right.$$
    In order to write the above in a uniform way, we describe  $\mathrm{T}_{x_0, \mathcal{O}_{k_v}}(\kappa_v)$ explicitly as follows.
\[\mathrm{T}_{x_0, \mathcal{O}_{k_v}}(\kappa_v)\cong \left\{\begin{array}{l l}
\kappa_{K_v}^\times & \textit{in the case (a), if $K_v/k_v$ is unramified;}\\
(\kappa_v[x]/(x^3))^\times& \textit{in the case (a), if $K_v/k_v$ is ramified;}\\
(\kappa_{K_{v,1}}\times\kappa_v)^\times& \textit{in the case (b), if $K_{v,1}/k_v$ is unramified;}\\
(\kappa_v[x]/(x^2)\times\kappa_v)^\times& \textit{in the case (b), if $K_{v,1}/k_v$ is ramified;}\\ 
(\kappa_v^3)^\times&\textit{in the case (c)},
\end{array}\right.
\]
where $\kappa_{K_v}$ is a cubic field extension over $\kappa_v$ in the first case, and $\kappa_{K_{v,1}}$ is a quadratic field extension over $\kappa_v$ in the third case.
Plugging it into the above formula for $\mathcal{O}_{\chi,d\mu_v}(\mathbbm{1}_{\mathfrak{gl}_n(\mathcal{O}_{k_v})})$, we obtain the desired formula.
\end{enumerate}
\end{proof}

\bibliographystyle{alpha}
\bibliography{References}

@article {BR,
    AUTHOR = {Borovoi, Mikhail and Rudnick, Ze\'{e}v},
     TITLE = {Hardy-{L}ittlewood varieties and semisimple groups},
   JOURNAL = {Invent. Math.},
  FJOURNAL = {Inventiones Mathematicae},
    VOLUME = {119},
      YEAR = {1995},
    NUMBER = {1},
     PAGES = {37--66},
      ISSN = {0020-9910,1432-1297},
   MRCLASS = {11G35 (14M17)},
  MRNUMBER = {1309971},
MRREVIEWER = {A.\ A.\ Bondarenko},
       DOI = {10.1007/BF01245174},
       URL = {https://doi.org/10.1007/BF01245174},
}

@article {BD,
    AUTHOR = {Borovoi, Mikhail and Demarche, Cyril},
     TITLE = {Manin obstruction to strong approximation for homogeneous
              spaces},
   JOURNAL = {Comment. Math. Helv.},
  FJOURNAL = {Commentarii Mathematici Helvetici. A Journal of the Swiss
              Mathematical Society},
    VOLUME = {88},
      YEAR = {2013},
    NUMBER = {1},
     PAGES = {1--54},
      ISSN = {0010-2571,1420-8946},
   MRCLASS = {14M17 (11G35 14F22 14G25)},
  MRNUMBER = {3008912},
MRREVIEWER = {Stefan\ Schr\"oer},
       DOI = {10.4171/CMH/277},
       URL = {https://doi.org/10.4171/CMH/277},
}

@article {Bir,
    AUTHOR = {Birch, B. J.},
     TITLE = {Forms in many variables},
   JOURNAL = {Proc. Roy. Soc. London Ser. A},
  FJOURNAL = {Proceedings of the Royal Society. London. Series A.
              Mathematical, Physical and Engineering Sciences},
    VOLUME = {265},
      YEAR = {1961/62},
     PAGES = {245--263},
      ISSN = {0962-8444,2053-9169},
   MRCLASS = {10.15},
  MRNUMBER = {150129},
MRREVIEWER = {John\ V.\ Armitage},
       DOI = {10.1098/rspa.1962.0007},
       URL = {https://doi.org/10.1098/rspa.1962.0007},
}

@article{CKL,
  title={Orbital integrals for classical Lie algebras and smooth integral models},
  author={Cho, Sungmun and Kang, Taeyeoup and Lee, Yuchan},
  journal={arXiv:2411.16054},
}

@article{Lee25,
  title={{D}iophantine analysis and {A}rthur’s trace formula},
  author={Lee, Yuchan},
  journal={arXiv:2509.22314},
}

@article {CtX,
    AUTHOR = {Colliot-Th\'el\`ene, Jean-Louis and Xu, Fei},
     TITLE = {Brauer-{M}anin obstruction for integral points of homogeneous
              spaces and representation by integral quadratic forms},
      NOTE = {With an appendix by Dasheng Wei and Xu},
   JOURNAL = {Compos. Math.},
  FJOURNAL = {Compositio Mathematica},
    VOLUME = {145},
      YEAR = {2009},
    NUMBER = {2},
     PAGES = {309--363},
      ISSN = {0010-437X,1570-5846},
   MRCLASS = {11G35 (11D85 11E12 14F22 14G25 20G30)},
  MRNUMBER = {2501421},
MRREVIEWER = {Tam\'as\ Szamuely},
       DOI = {10.1112/S0010437X0800376X},
       URL = {https://doi.org/10.1112/S0010437X0800376X},
}

@article {DRS,
    AUTHOR = {Duke, W. and Rudnick, Z. and Sarnak, P.},
     TITLE = {Density of integer points on affine homogeneous varieties},
   JOURNAL = {Duke Math. J.},
  FJOURNAL = {Duke Mathematical Journal},
    VOLUME = {71},
      YEAR = {1993},
    NUMBER = {1},
     PAGES = {143--179},
      ISSN = {0012-7094,1547-7398},
   MRCLASS = {11G99 (11P21)},
  MRNUMBER = {1230289},
MRREVIEWER = {D.\ Mili\v ci\'c},
       DOI = {10.1215/S0012-7094-93-07107-4},
       URL = {https://doi.org/10.1215/S0012-7094-93-07107-4},
}

@article{EGM,
  title={Abelian varieties, preprint available at \url{http://van-der-geer.nl/~gerard/AV.pdf}},
  author={Edixhoven, Bas and 
 van der Geer, Gerard and 
Moonen, Ben}
}

@article {EMS,
    AUTHOR = {Eskin, Alex and Mozes, Shahar and Shah, Nimish},
     TITLE = {Unipotent flows and counting lattice points on homogeneous
              varieties},
   JOURNAL = {Ann. of Math. (2)},
  FJOURNAL = {Annals of Mathematics. Second Series},
    VOLUME = {143},
      YEAR = {1996},
    NUMBER = {2},
     PAGES = {253--299},
      ISSN = {0003-486X,1939-8980},
   MRCLASS = {22E40 (11E57 11P21)},
  MRNUMBER = {1381987},
MRREVIEWER = {Alexander\ Starkov},
       DOI = {10.2307/2118644},
       URL = {https://doi.org/10.2307/2118644},
}

@incollection {Gor22,
    AUTHOR = {Gordon, Julia},
     TITLE = {Orbital integrals and normalizations of measures},
 BOOKTITLE = {On the {L}anglands program---endoscopy and beyond},
    SERIES = {Lect. Notes Ser. Inst. Math. Sci. Natl. Univ. Singap.},
    VOLUME = {43},
     PAGES = {247--314},
 PUBLISHER = {World Sci. Publ., Hackensack, NJ},
      YEAR = {2024},
      ISBN = {[9789811285820]; [9789811285837]; [9789811285813]},
   MRCLASS = {20G25 (17B08)},
  MRNUMBER = {4739679},
}

@article {GG99,
    AUTHOR = {Gross, Benedict H. and Gan, Wee Teck},
     TITLE = {Haar measure and the {A}rtin conductor},
   JOURNAL = {Trans. Amer. Math. Soc.},
  FJOURNAL = {Transactions of the American Mathematical Society},
    VOLUME = {351},
      YEAR = {1999},
    NUMBER = {4},
     PAGES = {1691--1704},
      ISSN = {0002-9947},
   MRCLASS = {20G30 (11G99)},
  MRNUMBER = {1458303},
MRREVIEWER = {Stefan K\"{u}hnlein},
       DOI = {10.1090/S0002-9947-99-02095-4},
       URL = {https://doi.org/10.1090/S0002-9947-99-02095-4},
}

@incollection {Kal,
    AUTHOR = {Kaletha, Tasho},
     TITLE = {Lectures on the stable trace formula with emphasis on {$\rm
              SL_2$}},
 BOOKTITLE = {On the {L}anglands program---endoscopy and beyond},
    SERIES = {Lect. Notes Ser. Inst. Math. Sci. Natl. Univ. Singap.},
    VOLUME = {43},
     PAGES = {45--139},
 PUBLISHER = {World Sci. Publ., Hackensack, NJ},
      YEAR = {2024},
      ISBN = {[9789811285820]; [9789811285837]; [9789811285813]},
   MRCLASS = {11F72},
  MRNUMBER = {4739676},
}

@article {Kot84,
    AUTHOR = {Kottwitz, Robert E.},
     TITLE = {Stable trace formula: cuspidal tempered terms},
   JOURNAL = {Duke Math. J.},
  FJOURNAL = {Duke Mathematical Journal},
    VOLUME = {51},
      YEAR = {1984},
    NUMBER = {3},
     PAGES = {611--650},
      ISSN = {0012-7094,1547-7398},
   MRCLASS = {11R39 (11F70 11F72 22E55)},
  MRNUMBER = {757954},
MRREVIEWER = {Jean-Pierre\ Labesse},
       DOI = {10.1215/S0012-7094-84-05129-9},
       URL = {https://doi.org/10.1215/S0012-7094-84-05129-9},
}

@incollection {Lab,
    AUTHOR = {Labesse, Jean-Pierre},
     TITLE = {Introduction to endoscopy},
 BOOKTITLE = {Representation theory of real reductive {L}ie groups},
    SERIES = {Contemp. Math.},
    VOLUME = {472},
     PAGES = {175--213},
 PUBLISHER = {Amer. Math. Soc., Providence, RI},
      YEAR = {2008},
      ISBN = {978-0-8218-4366-6},
   MRCLASS = {22E55 (11F70 22E50)},
  MRNUMBER = {2454335},
MRREVIEWER = {Volker\ J.\ Heiermann},
       DOI = {10.1090/conm/472/09240},
       URL = {https://doi.org/10.1090/conm/472/09240},
}

@book {Neu,
    AUTHOR = {Neukirch, J\"{u}rgen},
     TITLE = {Algebraic number theory},
    SERIES = {Grundlehren der mathematischen Wissenschaften [Fundamental
              Principles of Mathematical Sciences]},
    VOLUME = {322},
      NOTE = {Translated from the 1992 German original and with a note by
              Norbert Schappacher,
              With a foreword by G. Harder},
 PUBLISHER = {Springer-Verlag, Berlin},
      YEAR = {1999},
     PAGES = {xviii+571},
      ISBN = {3-540-65399-6},
   MRCLASS = {11Rxx (11-02 11S15 11S31 14C40)},
  MRNUMBER = {1697859},
MRREVIEWER = {Cornelius Greither},
       DOI = {10.1007/978-3-662-03983-0},
       URL = {https://doi.org/10.1007/978-3-662-03983-0},
}

@article {Ngo,
    AUTHOR = {Ng{\^o}, Bao Ch\^au},
     TITLE = {Le lemme fondamental pour les alg\`ebres de {L}ie},
   JOURNAL = {Publ. Math. Inst. Hautes \'Etudes Sci.},
  FJOURNAL = {Publications Math\'ematiques. Institut de Hautes \'Etudes
              Scientifiques},
    NUMBER = {111},
      YEAR = {2010},
     PAGES = {1--169},
      ISSN = {0073-8301,1618-1913},
   MRCLASS = {22E35 (11S37 14D23 14G35 22E50)},
  MRNUMBER = {2653248},
MRREVIEWER = {R.\ P.\ Langlands},
       DOI = {10.1007/s10240-010-0026-7},
       URL = {https://doi.org/10.1007/s10240-010-0026-7},
}

@book {Poonen,
    AUTHOR = {Poonen, Bjorn},
     TITLE = {Rational points on varieties},
    SERIES = {Graduate Studies in Mathematics},
    VOLUME = {186},
 PUBLISHER = {American Mathematical Society, Providence, RI},
      YEAR = {2017},
     PAGES = {xv+337},
      ISBN = {978-1-4704-3773-2},
   MRCLASS = {14G05 (11G35)},
  MRNUMBER = {3729254},
MRREVIEWER = {Daniel\ Loughran},
       DOI = {10.1090/gsm/186},
       URL = {https://doi.org/10.1090/gsm/186},
}

@article {Sch,
    AUTHOR = {Schmidt, Wolfgang M.},
     TITLE = {The density of integer points on homogeneous varieties},
   JOURNAL = {Acta Math.},
  FJOURNAL = {Acta Mathematica},
    VOLUME = {154},
      YEAR = {1985},
    NUMBER = {3-4},
     PAGES = {243--296},
      ISSN = {0001-5962,1871-2509},
   MRCLASS = {11D72 (11P55 11R09 11R45)},
  MRNUMBER = {781588},
MRREVIEWER = {D.\ J.\ Lewis},
       DOI = {10.1007/BF02392473},
       URL = {https://doi.org/10.1007/BF02392473},
}

@book {Ser,
    AUTHOR = {Serre, Jean-Pierre},
     TITLE = {Local fields},
    SERIES = {Graduate Texts in Mathematics},
    VOLUME = {67},
      NOTE = {Translated from the French by Marvin Jay Greenberg},
 PUBLISHER = {Springer-Verlag, New York-Berlin},
      YEAR = {1979},
     PAGES = {viii+241},
      ISBN = {0-387-90424-7},
   MRCLASS = {12Bxx},
  MRNUMBER = {554237},
}

@incollection {Ser67,
    AUTHOR = {Serre, Jean-Pierre},
     TITLE = {Local class field theory},
 BOOKTITLE = {Algebraic {N}umber {T}heory ({P}roc. {I}nstructional {C}onf.,
              {B}righton, 1965)},
     PAGES = {128--161},
 PUBLISHER = {Academic Press, London},
      YEAR = {1967},
   MRCLASS = {10.68},
  MRNUMBER = {220701},
MRREVIEWER = {G.\ Whaples},
}

@book {Vosk,
	AUTHOR = {Voskresenski\u{\i}, V. E.},
	TITLE = {Algebraic groups and their birational invariants},
	SERIES = {Translations of Mathematical Monographs},
	VOLUME = {179},
	NOTE = {Translated from the Russian manuscript by Boris Kunyavski
	[Boris \`E. Kunyavski\u{\i}]},
	PUBLISHER = {American Mathematical Society, Providence, RI},
	YEAR = {1998},
	PAGES = {xiv+218},
	ISBN = {0-8218-0905-9},
	MRCLASS = {20G30 (11E72 14G05 14G25)},
	MRNUMBER = {1634406},
	MRREVIEWER = {Philippe\ Gille},
	DOI = {10.1090/mmono/179},
	URL = {https://doi.org/10.1090/mmono/179},
}

@incollection {Tom,
    AUTHOR = {Hales, Thomas C.},
     TITLE = {A statement of the fundamental lemma},
 BOOKTITLE = {Harmonic analysis, the trace formula, and {S}himura varieties},
    SERIES = {Clay Math. Proc.},
    VOLUME = {4},
     PAGES = {643--658},
 PUBLISHER = {Amer. Math. Soc., Providence, RI},
      YEAR = {2005},
      ISBN = {0-8218-3844-X},
   MRCLASS = {22E50 (11F70 22E55)},
  MRNUMBER = {2192018},
MRREVIEWER = {Ernst-Wilhelm\ Zink},
}

@book {Wei82,
    AUTHOR = {Weil, Andr\'{e}},
     TITLE = {Adeles and algebraic groups},
    SERIES = {Progress in Mathematics},
    VOLUME = {23},
      NOTE = {With appendices by M. Demazure and Takashi Ono},
 PUBLISHER = {Birkh\"{a}user, Boston, MA},
      YEAR = {1982},
     PAGES = {iii+126},
      ISBN = {3-7643-3092-9},
   MRCLASS = {10C30 (12A82 12A85 20G35)},
  MRNUMBER = {670072},
MRREVIEWER = {K.\ F.\ Lai},
}

@article {WX,
    AUTHOR = {Wei, Dasheng and Xu, Fei},
     TITLE = {Counting integral points in certain homogeneous spaces},
   JOURNAL = {J. Algebra},
  FJOURNAL = {Journal of Algebra},
    VOLUME = {448},
      YEAR = {2016},
     PAGES = {350--398},
      ISSN = {0021-8693,1090-266X},
   MRCLASS = {11G35 (11E72 11R37 14F22 14G25 20G30 20G35)},
  MRNUMBER = {3438314},
MRREVIEWER = {J\"{o}rg\ Jahnel},
       DOI = {10.1016/j.jalgebra.2015.09.043},
       URL = {https://doi.org/10.1016/j.jalgebra.2015.09.043},
}

@incollection {Yun13,
    AUTHOR = {Yun, Zhiwei},
     TITLE = {Orbital integrals and {D}edekind zeta functions},
 BOOKTITLE = {The legacy of {S}rinivasa {R}amanujan},
    SERIES = {Ramanujan Math. Soc. Lect. Notes Ser.},
    VOLUME = {20},
     PAGES = {399--420},
 PUBLISHER = {Ramanujan Math. Soc., Mysore},
      YEAR = {2013},
      ISBN = {978-93-80416-13-7},
   MRCLASS = {22E35 (11R54)},
  MRNUMBER = {3221323},
MRREVIEWER = {Ivan\ Mati\'{c}},
}

\end{document}